\DeclareFontFamily{OMS}{rsfs}{\skewchar\font'60}
\DeclareFontShape{OMS}{rsfs}{m}{n}{<-5>rsfs5 <5-7>rsfs7 <7->rsfs10 }{}
\DeclareSymbolFont{rsfs}{OMS}{rsfs}{m}{n}
\DeclareSymbolFontAlphabet{\scr}{rsfs}
\renewcommand{\P}{\mathbb{P}} 
\renewcommand{\O}{\sO}
\newcommand{\Q}{\mathbb{Q}}
\newcommand\resto[1]{\hbox{\hbox{$\big\vert{}_{_{#1}}$}}}
\newcommand{\ratmap}{\dasharrow}
\newcommand{\sA}{\scr{A}}
\newcommand{\sB}{\scr{B}}
\newcommand{\sC}{\scr{C}}
\newcommand{\sE}{\scr{E}}
\newcommand{\sF}{\scr{F}}
\newcommand{\sG}{\scr{G}}
\newcommand{\sH}{\scr{H}}
\newcommand{\sK}{\scr{K}}
\newcommand{\sO}{\scr{O}}
\newcommand{\sT}{\scr{T}}
\newcommand{\bA}{\mathbb{A}}
\newcommand{\bC}{\mathbb{C}}
\newcommand{\bN}{\mathbb{N}}
\newcommand{\bQ}{\mathbb{Q}}
\DeclareMathOperator{\Exc}{Exc}
\DeclareMathOperator{\rk}{rk}
\DeclareMathOperator{\codim}{codim}
\DeclareMathOperator{\Hilb}{Hilb}
\DeclareMathOperator{\reg}{reg}
\DeclareMathOperator{\sing}{sing}
\DeclareMathOperator{\Sym}{Sym}
\DeclareMathOperator{\supp}{supp}
\DeclareMathOperator{\Isom}{Isom}
\DeclareMathOperator{\Var}{Var}
\DeclareMathOperator{\Aut}{Aut}
\DeclareMathOperator{\length}{{length}}
\newcommand{\into}{\hookrightarrow}
\newcommand{\wt}{\widetilde}
\newcommand{\wtilde}{\widetilde}
\newcommand{\what}{\widehat}
\newcounter{thisthm}
\newcommand{\ilabel}[1]{\newcounter{#1}\setcounter{thisthm}{\value{thm}}\setcounter{#1}{\value{enumi}}}
\newcommand{\iref}[1]{(\thesection.\the\value{thisthm}.\the\value{#1})}
\theoremstyle{plain}    
\newtheorem{thm}{Theorem}[section]
\newtheorem{defn}[thm]{Definition}
\newtheorem{assumption}[thm]{Assumption} 
\numberwithin{equation}{thm}
\numberwithin{figure}{section}
\theoremstyle{plain}    
\newtheorem{cor}[thm]{Corollary}
\newtheorem{lem}[thm]{Lemma}
\newtheorem{conjecture}[thm]{Conjecture}
\theoremstyle{plain}    
\newtheorem{prop}[thm]{Proposition}
\newtheorem{proclaim-special}[thm]{\specialthmname}
\theoremstyle{remark}
\newtheorem{rem}[thm]{Remark}
\newtheorem{obs}[thm]{Observation}
\newtheorem{iass}[thm]{Assumption} 
\newtheorem{ihyp}[thm]{Induction Hypothesis}
\newtheorem{subrem}[equation]{Remark}
\newtheorem{claim}[thm]{Claim} 
\newtheorem*{claim*}{Claim} 
\newtheorem{notation}[thm]{Notation}
\newtheoremstyle{bozont-remark}{3pt}{3pt}%
     {}
     {}
     {\it}
     {.}
     {.5em}
     {\thmname{#1}\thmnumber{ #2}: \thmnote{\sc #3}}
\theoremstyle{bozont-remark}
\newtheoremstyle{bozont-reverse}{3pt}{3pt}%
     {}
     {}
     {\it}
     {.}
     {.5em}
     {\thmnumber{ #2} \thmname{#1}: \thmnote{\sc #3}}
\theoremstyle{bozont-reverse}    
\newtheorem{remark-special}[thm]{\specialremname}
\newenvironment{named_rem}[1]
     {\def\specialremname{#1}\begin{remark-special}}
     {\end{remark-special}}
\def\factor#1.#2.{\left. \raise 2pt\hbox{$#1$} \right/\hskip -2pt\raise
  -2pt\hbox{$#2$}} 
\newlength{\swidth}
\newenvironment{enumerate-p}{
  \begin{enumerate}}
  {\setcounter{equation}{\value{enumi}}\end{enumerate}}
\date{\today}
\author{Stefan Kebekus}
\author{S\'andor J.\ Kov\'acs}
\thanks{Stefan Kebekus was supported in part by the DFG-Forschergruppe
  ``Classification of Algebraic Surfaces and Compact Complex
  Manifolds''.  S\'andor Kov\'acs was supported in part by NSF Grant
  DMS-0554697 and the Craig McKibben and Sarah Merner Endowed
  Professorship in Mathematics.}
\address{Stefan Kebekus, Mathematisches Institut, Albert-Ludwigs-Universit\"at
  Freiburg, Eckerstraße 1, 79104 Freiburg, Germany}
\email{\href{mailto:stefan.kebekus@math.uni-freiburg.de}{stefan.kebekus@math.uni-freiburg.de}}
\urladdr{\href{http://home.mathematik.uni-freiburg.de/kebekus}{http://home.mathematik.uni-freiburg.de/kebekus}}
\address{\noindent S\'andor Kov\'acs, University of Washington,
  Department of Mathematics, Box 354350, Seattle, WA 98195, U.S.A.}
\email{\href{mailto:kovacs@math.washington.edu}{kovacs@math.washington.edu}}
\urladdr{\href{http://www.math.washington.edu/~kovacs}{http://www.math.washington.edu/$\sim$kovacs}}
\title[Surfaces and threefolds mapping to the moduli stack]{The
  structure of surfaces and threefolds mapping to the moduli stack of
  canonically polarized varieties}
\begin{document}

\begin{abstract}
  Generalizing the well-known Shafarevich hyperbolicity conjecture, it has
  been conjectured by Viehweg that a quasi-projective manifold that admits a
  generically finite morphism to the moduli stack of canonically polarized
  varieties is necessarily of log general type.  Given a quasi-projective
  threefold $Y^\circ$ that admits a non-constant map to the moduli stack, we
  employ extension properties of logarithmic pluri-forms to establish a strong
  relationship between the moduli map and the minimal model program of
  $Y^\circ$: in all relevant cases the minimal model program leads to a fiber
  space whose fibration factors the moduli map. A much refined affirmative
  answer to Viehweg's conjecture for families over threefolds follows as a
  corollary.  For families over surfaces, the moduli map can be often be
  described quite explicitly.  Slightly weaker results are obtained for
  families of varieties with trivial, or more generally semi-ample canonical
  bundle.
\end{abstract}

\maketitle
\tableofcontents

\section{Introduction and main results}

\subsection{Introduction}

Let $Y^\circ$ be a quasi-projective manifold that admits a morphism $\mu:
Y^\circ \to \mathfrak M$ to the moduli stack of canonically polarized
varieties. Generalizing the classical Shafarevich hyperbolicity conjecture
\cite{Shaf63}, Viehweg conjectured in \cite[6.3]{Viehweg01} that $Y^\circ$ is
necessarily of log general type if $\mu$ is generically finite. Equivalently,
if $f^\circ: X^\circ \to Y^\circ$ is a smooth family of canonically polarized
varieties, then $Y^\circ$ is of log general type if the variation of $f^\circ$
is maximal, i.e., $\Var(f^\circ) = \dim Y^\circ$.  We refer to \cite{KK08} for
the relevant notions, for detailed references, and for a brief history of the
problem, but see also \cite{KS06}.

Viehweg's conjecture was confirmed for $2$-dimensional manifolds $Y^\circ$ in
\cite{KK08} using explicit surface geometry. Here, we employ recent extension
theorems for logarithmic forms to study families over threefolds. If $\dim
Y^\circ \leq 3$, we establish a strong relationship between the moduli map
$\mu$ and the logarithmic minimal model program of $Y^\circ$: in all relevant
cases, any logarithmic minimal model program necessarily terminates with a
fiber space whose fibration factors the moduli map.  This allows us to prove a
much refined version of Viehweg's conjecture for families over surfaces and
threefolds, and give a positive answer to the conjecture even for families of
varieties with only semi-ample canonical bundle. If $Y^\circ$ is a surface we
recover the results of \cite{KK08} in a more sophisticated manner. In fact,
going far beyond those results we give a complete geometric description of the
moduli map in those cases when the variation cannot be maximal.

The proof of our main result is rather conceptual and independent of the
argumentation of \cite{KK08} which essentially relied on combinatorial
arguments for curve arrangements on surfaces and on Keel-McKernan's solution
to the Miyanishi conjecture in dimension 2, \cite{KMcK}. Many of the
techniques introduced here generalize well to higher dimensions, most others
at least conjecturally.

Throughout the present paper we work over the field of complex numbers.

\subsection{Main results}

The main results of the present paper are summarized in the following theorems
which describe the geometry of families over threefolds under increasingly
strong hypothesis.

\begin{thm}[Viehweg conjecture for families over threefolds]\label{thm:mainresult0}
  Let $f^\circ: X^\circ \to Y^\circ$ be a smooth projective family of
  varieties with semi-ample canonical bundle, over a quasi-projective manifold
  $Y^\circ$ of dimension $\dim Y^\circ \leq 3$. If $f^\circ$ has maximal
  variation, then $Y^\circ$ is of log general type. In other words,
  $$
  \Var(f^\circ) = \dim Y^\circ \, \Rightarrow \, \kappa(Y^\circ) = \dim
  Y^\circ.
  $$
\end{thm}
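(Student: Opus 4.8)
\medskip
\noindent\emph{Strategy of proof.}
The plan is to reformulate the assertion in terms of logarithmic differentials on a smooth compactification and then to exploit the logarithmic minimal model program, which is at our disposal because $\dim Y^\circ \le 3$. First I would choose a smooth projective compactification $Y \supseteq Y^\circ$ for which $D := Y \setminus Y^\circ$ is a reduced simple normal crossing divisor; the goal then becomes to prove that $\kappa(Y, K_Y + D) = \dim Y$. Maximal variation of $f^\circ$ feeds into the Viehweg--Zuo machinery which, using the semi-ampleness of the fibrewise canonical bundle, produces a big invertible subsheaf $\sA \hookrightarrow \Sym^{[N]}\Omega^{[p]}_Y(\log D)$ for suitable $p$ and $N>0$. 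The problem is thereby reduced to showing that the presence of such a ``big pluri-form subsheaf'' forces $K_Y + D$ to be big.

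Next I would run a logarithmic minimal model program for the dlt pair $(Y, D)$. Every elementary step is a divisorial contraction or a flip $(Y_i, D_i) \dashrightarrow (Y_{i+1}, D_{i+1})$, and bigness of $K_{Y_i} + D_i$ is constant along the program, so it is enough to decide the question on the final model. The crucial point here — and the place where extension theorems for logarithmic pluri-forms become indispensable — is that, after passing to reflexive pullbacks and reflexive hulls, the sheaves $\Sym^{[N]}\Omega^{[p]}(\log D_i)$ behave functorially along these birational modifications on the mildly singular pairs that occur. Consequently the Viehweg--Zuo subsheaf survives each step, and every $(Y_i, D_i)$ again carries a big invertible subsheaf of some $\Sym^{[N]}\Omega^{[p]}(\log D_i)$.

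Finally I would analyse the endpoint, which is either a minimal model $(Y', D')$ with $K_{Y'} + D'$ nef, or a Mori fibre space $\pi : Y' \to Z$ with $\dim Z < \dim Y'$. In the Mori fibre space case I would derive a contradiction with maximality: a general fibre $F$ of $\pi$, equipped with the restricted boundary, carries a log Fano structure, hence is rationally connected, so a family of varieties with semi-ample canonical bundle over its open part is isotrivial; thus the moduli map $\mu$ is constant along $F$, it factors birationally through $\pi$, and $\Var(f^\circ) \le \dim Z < \dim Y^\circ$, contrary to hypothesis. In the minimal model case I would combine the nefness of $K_{Y'} + D'$ with the big Viehweg--Zuo subsheaf: invoking abundance in dimension $\le 3$, the structure of the associated Iitaka fibration, and the semi-positivity theorems of Campana--Paun for the logarithmic cotangent sheaf of a log canonical pair, one concludes that $K_{Y'}+D'$ cannot be nef without being big, whence $\kappa(Y^\circ) = \dim Y^\circ$.

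The hardest part, I expect, is the interface between the last two steps: transporting the Viehweg--Zuo sheaf intact through flips — precisely the sort of statement that extension results for logarithmic pluri-forms on mildly singular spaces are designed to supply — and, in the minimal model case, upgrading ``$K_{Y'}+D'$ nef with a big subsheaf of a symmetric power of $\Omega^{[p]}(\log D')$'' to ``$K_{Y'}+D'$ big''. The assumption $\dim Y^\circ \le 3$ is what renders both tractable, as it puts abundance and the complete classification of threefold contractions at our disposal.
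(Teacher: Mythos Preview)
Your global architecture matches the paper's: compactify, invoke the Viehweg--Zuo sheaf $\sA \hookrightarrow \bigl(\Omega^1_Y(\log D)\bigr)^{\otimes n}$ (it lives in tensor powers of $\Omega^1$, not of $\Omega^p$), push it forward through the log MMP via a codimension-two argument, and analyse the output. Two points deserve comment.

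\medskip
\noindent\textbf{The Mori fibre space case.} Your sentence ``log Fano, hence rationally connected, so the family is isotrivial'' is not how the paper argues and is not obviously justified for families with merely semi-ample canonical bundle. The paper instead restricts the Viehweg--Zuo sheaf to a general fibre $F$ and shows directly that a log Fano curve or surface cannot carry a Viehweg--Zuo sheaf of positive Kodaira--Iitaka dimension (the surface case uses Bogomolov--Sommese vanishing on log canonical spaces and a second run of the MMP on $F$; see Theorem~\ref{thm:b2thm} and Corollary~\ref{cor:VZonFanoSurface}). This is sharper than a rational-connectedness appeal and is what actually works for the semi-ample hypothesis.

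\medskip
\noindent\textbf{The minimal model case.} Here your proposal diverges substantially from the paper, and this is where the real gap lies. You write that ``the semi-positivity theorems of Campana--P\u{a}un'' let one conclude that $K_{Y'}+D'$ cannot be nef without being big. That result (a big subsheaf of a tensor power of $\Omega^1(\log D')$ forces $K_{Y'}+D'$ big) is exactly the heart of the matter, and it postdates the paper; invoking it here is a black box standing in for the entire difficulty. The paper instead splits into $\kappa(Y^\circ)=0$ and $0<\kappa(Y^\circ)<\dim Y^\circ$. The case $\kappa=0$ is the technical core (Proposition~\ref{prop:6.2}): one shows by induction on $\dim Z$ that a $\bQ$-factorial dlt pair $(Z,\Delta)$ with $\kappa(K_Z+\Delta)=0$ admits no Viehweg--Zuo sheaf of positive Kodaira--Iitaka dimension. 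The argument passes to the index-one cover, shows the boundary is nonempty (via a uniruledness criterion, Theorem~\ref{thm:uniruledness-of-minimal-models}), runs a \emph{second} MMP for $(Z,(1-\varepsilon)\Delta)$ to reach a Mori fibre space, proves the Viehweg--Zuo sheaf is trivial on fibres, and then restricts to a boundary component to invoke the inductive hypothesis. The case $\kappa>0$ is then reduced to $\kappa=0$ on a general fibre of the Iitaka fibration. None of this is visible in your outline.

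\medskip
\noindent\textbf{On the ``hardest part''.} Transporting the Viehweg--Zuo sheaf \emph{forward} through the MMP is in fact easy (Lemma~\ref{lem:pushdownA}: the inverse of a flip or divisorial contraction contracts no divisor, so one restricts and re-extends reflexively). The extension theorems for log forms are needed in the opposite direction---pulling back to a log resolution in the $\kappa=0$ analysis (Theorem~\ref{thm:VZsheafextension2}, used inside Proposition~\ref{prop:c6}). So your instinct that extension results are essential is correct, but their point of entry is inside the $\kappa=0$ induction, not in the MMP transport.
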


\begin{subrem}
  The definition of Kodaira dimension $\kappa(Y^\circ)$ for quasi-projective
  manifolds is recalled in Notation~\ref{not:Kodaira} below.
\end{subrem}

For families of \emph{canonically} polarized varieties, we can say much
more. The following much stronger theorem gives an explicit geometric
explanation of Theorem~\ref{thm:mainresult0}.

\begin{thm}[Relationship between the moduli map and the MMP]\label{thm:mainresult2}
  Let $f^\circ: X^\circ \to Y^\circ$ be a smooth projective family of
  canonically polarized varieties, over a quasi-projective manifold $Y^\circ$
  of dimension $\dim Y^\circ \leq 3$. Let $Y$ be a smooth compactification of
  $Y^\circ$ such that $D := Y \setminus Y^\circ$ is a divisor with simple
  normal crossings.

  Then any run of the minimal model program of the pair $(Y,D)$ will terminate
  in a Kodaira or Mori fiber space whose fibration factors the moduli map
  birationally.
\end{thm}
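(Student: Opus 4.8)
The plan is to combine two ingredients: the extension theorems for logarithmic pluri-forms (which is the novel analytic/geometric input), and the observation that the moduli map $\mu$ produces, via Viehweg–Zuo type constructions, a big coherent subsheaf of some power of $\Omega^1_{Y^\circ}(\log D)$. Concretely, I would first recall that the existence of the non-constant moduli map $\mu\colon Y^\circ \to \mathfrak M$ yields, for some $m>0$, a non-zero map $\sL \hookrightarrow \Sym^{m}\Omega^1_{Y}(\log D)$ where $\sL$ is a line bundle of Kodaira dimension at least $\Var(f^\circ)$; in the maximal variation case $\sL$ is big. The key point of the extension machinery is that such logarithmic symmetric forms, a priori defined on $Y$, descend along any birational contraction and any fiber-type contraction in a controlled way — i.e. the MMP for $(Y,D)$ is compatible with the Viehweg–Zuo sheaf. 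This is what lets us transport the obstruction along the MMP rather than having to analyze each step by hand.

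The second step is to run the logarithmic MMP for $(Y,D)$ and track what happens to $\sL$. Because $(Y,D)$ is not of log general type in the cases relevant to Theorem~\ref{thm:mainresult2} (the log-general-type case being exactly Theorem~\ref{thm:mainresult0}, which one would prove first, or in parallel), the program terminates in a Kodaira fiber space (if $\kappa(Y,D)\ge 0$) or a Mori fiber space (if $\kappa(Y,D)=-\infty$). Call the resulting fibration $g\colon Y' \to Z$. On the general fiber $F$ of $g$, the pair $(F, D|_F)$ has either $K_F + D|_F$ numerically trivial (Mori case, even log-Fano-type) or $\sim_{\mathbb Q} 0$ (Kodaira case). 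I would then argue that the Viehweg–Zuo sheaf restricted to $F$ must vanish: a big (or even just pseudo-effective of positive Kodaira dimension) piece of $\Sym^m\Omega^1_F(\log D|_F)$ is incompatible with $F$ being of log-Calabi–Yau or log-Fano type, using that such $F$ carry no non-zero logarithmic symmetric forms with positive Iitaka dimension — for surfaces and curves this is classical, and in dimension $\le 3$ it follows from the structure of the relevant fibers. Hence the moduli map is constant on the general fiber $F$, and therefore on every fiber of $g$; so $\mu$ factors rationally through $Z$, i.e. the fibration factors the moduli map birationally.

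The main obstacle I expect is the compatibility statement in the first step: showing that the Viehweg–Zuo logarithmic symmetric differentials genuinely survive each elementary step of the log MMP — divisorial contractions, small contractions/flips, and the final Mori/Kodaira contraction — with the right positivity preserved. Extension (pushing forward along the birational morphism) is the easy direction; the subtle part is descent and controlling how the pair $(Y,D)$ and the boundary interact with the singularities introduced by the MMP, so that one stays in a category where the extension theorems for logarithmic forms on the relevant (dlt, or at worst klt) pairs apply. This requires knowing that the pairs produced are sufficiently mild (dlt, and that the pushforward of $\Omega^1(\log D)$ behaves like a reflexive logarithmic cotangent sheaf), which is where the hypothesis $\dim Y^\circ \le 3$ is used — the MMP is well-understood and the singularities are controllable there.

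A secondary obstacle is the fiberwise vanishing: one must rule out non-trivial logarithmic pluri-forms of positive Iitaka dimension on the fibers of the output fibration. For a Mori fiber space the fibers are log-Fano-type (rationally connected after removing the boundary), and one invokes the absence of such forms; for a Kodaira fiber space the fibers have $\kappa(F, D|_F) = 0$, and one must show this forbids a big subsheaf of $\Sym^m\Omega^1_F(\log D|_F)$ — this needs the classification of such fibers in dimensions $1$ and $2$, which is available. I would organize the write-up so that Theorem~\ref{thm:mainresult0} emerges as the case $Z = \mathrm{pt}$ (equivalently, the fibration being trivial forces $\kappa(Y^\circ) = \dim Y^\circ$), making the logical dependence clean.
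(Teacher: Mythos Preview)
Your broad strategy is the paper's---construct a Viehweg--Zuo sheaf $\sA$, push it forward along the MMP, and show it cannot have positive Kodaira--Iitaka dimension on the general fiber of the terminal fibration---but you have inverted the relative difficulty of your two obstacles, and this hides a real gap. Compatibility of $\sA$ with the MMP is the easy part: since $\lambda^{-1}$ contracts no divisor, one simply transports $\sA$ across the codimension-two indeterminacy locus and takes the reflexive hull (Lemma~\ref{lem:pushdownA}); no extension theorem is needed in this direction. The extension results of \cite{GKK08} enter only later and in the \emph{opposite} direction, pulling reflexive forms back through a log resolution of a singular dlc surface (Theorem~\ref{thm:VZsheafextension2}), inside the inductive step described below.

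The genuine difficulty is your ``secondary obstacle'', and here your sketch is too thin. When $\kappa(Y^\circ)=0$ the Kodaira fibration maps to a point, so the fiber is $Y_\lambda$ itself, of dimension up to $3$; appealing to classification in dimensions $1$ and $2$ does not cover this case, and even in dimension $2$ no classification argument is actually used. The paper's treatment (Proposition~\ref{prop:6.2}) is an induction on dimension whose inductive step is substantial: one shows $\Delta_\lambda\ne\emptyset$ via a uniruledness criterion (Corollary~\ref{cor:boundarynonempty}), runs a \emph{second} MMP for the thinned pair $\bigl(Z_\lambda,(1-\varepsilon)\Delta_\lambda\bigr)$ to obtain a Mori fiber space, invokes Bogomolov--Sommese vanishing on log canonical threefolds (Theorem~\ref{thm:Bvanishing}) to force $\rho>1$, shows the Viehweg--Zuo sheaf is trivial on the general fiber of this second fibration, locates a boundary component on which it still has positive $\kappa$, passes to the index-one cover to make $K+\Delta$ Cartier, and finally restricts via the log conormal sequence to descend one dimension. ``Structure of the relevant fibers'' does not capture any of this; without it your plan has no mechanism to handle the $\kappa=0$ case, which is precisely the heart of Theorem~\ref{thm:mainresult2}.
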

\begin{subrem}
  If $\kappa(Y^\circ) = 0$ in the setup of Theorem~\ref{thm:mainresult2}, then
  any run of the minimal model program will terminate in a Kodaira fiber space
  that maps to a single point. Since this map to a point factors the moduli
  map birationally, Theorem~\ref{thm:mainresult2} asserts that the family
  $f^\circ$ is necessarily isotrivial if $\kappa(Y^\circ) = 0$.
\end{subrem}
\begin{subrem}
  Neither the compactification $Y$ nor the minimal model program discussed in
  Theorem~\ref{thm:mainresult2} is unique. When running the minimal model
  program, one often needs to choose the extremal ray that is to be
  contracted.
\end{subrem}

In the setup of Theorem~\ref{thm:mainresult2}, if $\kappa(Y^\circ) \geq 0$,
then the minimal model program terminates in a Kodaira fiber space whose base
has dimension $\kappa(Y^\circ)$. The following refined version of Viehweg's
conjecture is therefore an immediate corollary of
Theorem~\ref{thm:mainresult2}.

\begin{cor}[Refined Viehweg conjecture for families over threefolds cf.\
  \protect{\cite[1.6]{KK08}}]\label{cor:mainresult1}
  Let $f^\circ: X^\circ \to Y^\circ$ be a smooth projective family of
  canonically polarized varieties, over a quasi-projective manifold $Y^\circ$
  of dimension $\dim Y^\circ \leq 3$. Then either
  \begin{itemize}
  \item[i)] $\kappa(Y^\circ) = -\infty$ and $\Var(f^\circ) < \dim Y^\circ$, or
  \item[ii)] $\kappa(Y^\circ) \geq 0$ and $\Var(f^\circ) \leq
    \kappa(Y^\circ)$. \qed
  \end{itemize}
\end{cor}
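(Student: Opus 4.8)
The plan is to deduce the corollary as a direct formal consequence of Theorem~\ref{thm:mainresult2}, which already does all the geometric work; the task here is purely to unwind what the conclusion of that theorem says numerically about $\kappa(Y^\circ)$ and $\Var(f^\circ)$. First I would fix a smooth compactification $Y \supseteq Y^\circ$ with $D = Y \setminus Y^\circ$ a simple normal crossings divisor, as in the statement of Theorem~\ref{thm:mainresult2}, and run a logarithmic minimal model program for the pair $(Y,D)$. By Theorem~\ref{thm:mainresult2} this terminates in a Kodaira or Mori fiber space $g\colon Y' \to Z$ whose fibration factors the moduli map $\mu$ birationally; write $\dim Z = d$.

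Next I would split into the two cases according to the sign of $\kappa(Y^\circ)$. If the program ends in a Mori fiber space, then $Z$ has strictly smaller dimension than $Y'$, the relative (log-)anticanonical bundle is relatively ample, and the log pair is covered by rational curves on which $K_{Y'}+D'$ is negative; hence $\kappa(Y,D) = \kappa(Y^\circ) = -\infty$. In that case $d < \dim Y^\circ$, and since $g$ factors $\mu$ birationally the fibers of $g$ map to a point in the moduli stack, so $\mu$ is constant along a positive-dimensional family and $\Var(f^\circ) \leq d < \dim Y^\circ$. This yields alternative (i). If instead $\kappa(Y^\circ) \geq 0$, the program ends in a Kodaira fiber space, i.e.\ $g$ is (birationally) the Iitaka fibration of $K_{Y}+D$, so $\dim Z = \kappa(Y,D) = \kappa(Y^\circ)$; again the fibration $g$ factors $\mu$ birationally, so $\mu$ is constant on the general fiber of $g$ and therefore $\Var(f^\circ) \leq \dim Z = \kappa(Y^\circ)$. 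This is alternative (ii). Since these two cases are mutually exclusive and exhaust all possibilities, the corollary follows.

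The only genuine point requiring care — and the place I would spend a sentence or two — is the passage from "$g$ factors $\mu$ birationally" to the variation bound $\Var(f^\circ) \leq \dim Z$. One should note that $\Var(f^\circ)$ depends only on the birational equivalence class of $Y^\circ$ together with the family, and recall (cf.\ the conventions of \cite{KK08}) that if the moduli map factors rationally through a variety of dimension $d$ then $\Var(f^\circ) \leq d$, since the isomorphism class of the fibers is then constant in the directions of the general fiber of $g$. Everything else — that a Mori fiber space forces $\kappa = -\infty$ and has base of smaller dimension, that a Kodaira fiber space has base of dimension equal to the log Kodaira dimension — is standard minimal model theory and needs no further argument here.
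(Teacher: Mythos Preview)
Your proposal is correct and follows exactly the route the paper takes: the paper states just before the corollary that for $\kappa(Y^\circ)\geq 0$ the minimal model program terminates in a Kodaira fiber space with base of dimension $\kappa(Y^\circ)$, and then marks the corollary with a \qed, leaving the Mori--fiber--space case $\kappa(Y^\circ)=-\infty$ implicit in the same way you argue it. Your added paragraph on why ``$g$ factors $\mu$ birationally'' bounds the variation is a harmless elaboration of what the paper regards as immediate from the definitions.
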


As a further application of Theorem~\ref{thm:mainresult2}, we describe the
family $f^\circ: X^\circ \to Y^\circ$ explicitly if the base manifold
$Y^\circ$ is a surface and the variation is not maximal.

\begin{thm}[Description of the family in case of $\Var(f^\circ)=1$]\label{thm:mainresult3}
  Let $f^\circ: X^\circ \to Y^\circ$ be a smooth projective family of
  canonically polarized varieties, over a quasi-projective manifold $Y^\circ$
  of dimension $\dim Y^\circ=2$.  If $\kappa(Y^\circ) < 2$ and
  $\Var({f^\circ}) = 1$, then one of the following holds.
  \begin{enumerate}
  \item $\kappa({Y^\circ})=1$, and there exists an open set $U \subseteq
    Y^\circ$ and a Cartesian diagram of one of the following two types, %
    \vskip-2.2em
    $$
    \hskip1.3cm
    \xymatrix{%
      {\widetilde {U}} \ar[rr]^{\gamma}_{\text{\'etale}}
      \ar[d]_{\txt{\scriptsize $\widetilde \pi$\\\scriptsize elliptic
          \\ \scriptsize \quad fibration\quad\ }} && {{U}}
      \ar[d]^{\txt{\scriptsize $\pi$\vphantom{$\widetilde\pi$}\\
          \scriptsize elliptic \\
          \scriptsize \quad fibration\quad\ }} \\
      {\widetilde V} \ar[rr]_{\text{\'etale}} && {V}}
    \begin{minipage}[c]{.02\linewidth}
      \ \\
      \ \\
      \begin{center}
        \text{{or}}
      \end{center}
    \end{minipage}
    \xymatrix{%
      {\widetilde {U}} \ar[rr]^{\gamma}_{\text{\'etale}}
      \ar[d]_{\txt{\scriptsize
          $\widetilde \pi$\\ \scriptsize smooth\\
          \scriptsize algebraic\\ \scriptsize \quad
          $\bC^*$-bundle\quad\ }}
      && {{U}} \ar[d]^{\txt{\scriptsize $\pi\vphantom{\widetilde\pi}$\\
          \scriptsize smooth,  \\
          \scriptsize algebraic\\ \scriptsize \quad
          $\bC^*$-bundle\quad\ }}\\
      {\widetilde V} \ar@{=}[rr] && {V}}
    $$
    such that $f^\circ_{\wt U}: X^\circ \times_U \wtilde U\to \wt U$ is the
    pull-back of a family over $\widetilde V$.
  \item $\kappa({Y^\circ})= -\infty$, and there exists an open set $U
    \subseteq Y^\circ$ of the form $U = V \times \bA^1$ such that
    $X^\circ\resto U$ is the pull-back of a family over $V$.
  \end{enumerate}
\end{thm}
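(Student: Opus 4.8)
The plan is to deduce the whole statement from Theorem~\ref{thm:mainresult2} by combining it with the classification of curves by logarithmic Kodaira dimension and with elementary facts on isotrivial families. First I would pick a smooth compactification $Y\supseteq Y^\circ$ with $D:=Y\setminus Y^\circ$ a simple normal crossings divisor and run a logarithmic minimal model program for $(Y,D)$; by Theorem~\ref{thm:mainresult2} this terminates in a fiber space $\phi\colon Y'\to C$ whose fibration factors the moduli map $\mu$ birationally, and I write $D'$ for the image of $D$ on $Y'$. Since $\Var(f^\circ)=1$ and the fibers of $\phi$ are birationally those of $\mu$, the base $C$ is a curve. As $\kappa(Y^\circ)\neq 2$ by hypothesis, while $\kappa(Y^\circ)=0$ would make $f^\circ$ isotrivial by the remark following Theorem~\ref{thm:mainresult2}, we must be in one of the cases $\kappa(Y^\circ)=1$ or $\kappa(Y^\circ)=-\infty$, which will give~(1) and~(2) respectively. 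In either case, since the birational map from $Y$ to $Y'$ alters only finitely many curves, a dense open $Y^\circ_0\subseteq Y^\circ$ is carried isomorphically onto an open subset of $Y'\setminus\supp D'$, so that over a general point $c\in C$ the fiber of the induced fibration $\pi_0\colon Y^\circ_0\to C$ is $\bar F_c\setminus(\supp D'\cap\bar F_c)$ with $\bar F_c:=\phi^{-1}(c)$; and along the fibers of $\pi_0$ the family $f^\circ$ is isotrivial because $\mu$ factors through $C$.

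If $\kappa(Y^\circ)=1$, then $\phi$ is a Kodaira fiber space for $(Y',D')$, so its general log fiber has log Kodaira dimension $0$. Adjunction gives $2g(\bar F_c)-2+\#(\supp D'\cap\bar F_c)=0$, whence either $\bar F_c$ is an elliptic curve disjoint from $\supp D'$, or $\bar F_c\cong\P^1$ and $\#(\supp D'\cap\bar F_c)=2$. Correspondingly, after shrinking $C$, I would obtain an open $U\subseteq Y^\circ$ carrying an elliptic fibration, resp.\ a smooth algebraic $\bC^*$-bundle, $\pi\colon U\to V$, along the fibers of which $f^\circ$ is isotrivial. Since the automorphism group of a canonically polarized variety is finite, the monodromy of $f^\circ$ along a fiber of $\pi$ is a finite quotient of $\pi_1$ of that fiber---of $\bZ^2$ in the elliptic case, of $\bZ$ in the $\bC^*$ case---and I would kill it by a fiberwise isogeny: in the $\bC^*$ case the cyclic cover $t\mapsto t^d$ of the fibers does this without touching $V$, while in the elliptic case one also passes to an \'etale cover $\wt V\to V$. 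This produces an \'etale cover $\gamma\colon\wt U\to U$ over which $f^\circ_{\wt U}$ is the pullback of a family over $\wt V$, which is precisely~(1).

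If $\kappa(Y^\circ)=-\infty$, then $\phi$ is a $(K_{Y'}+D')$-Mori fiber space; it cannot map to a point, since that would make $\mu$ constant, so the general fiber $\bar F_c\cong\P^1$ and $\deg\big((K_{Y'}+D')|_{\bar F_c}\big)<0$, forcing $\#(\supp D'\cap\bar F_c)\le 1$. By Tsen's theorem the generic fiber of $\phi$ has a rational point, so over a suitable open $V_0\subseteq C$---chosen to avoid the vertical components of $D'$ and the finitely many curves altered by the minimal model program---$\phi$ restricts to the trivial bundle $V_0\times\P^1\to V_0$. Deleting the at most one horizontal component of $D'$ (or else an arbitrary section) and shrinking $V_0$, I would reach an open subset $U=V\times\bA^1\subseteq Y^\circ$; since $f^\circ$ is isotrivial along the $\bA^1$-fibers and $\bA^1$ is simply connected, $f^\circ\resto U$ is trivial along those fibers, hence the pullback of a family over $V$. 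This is~(2).

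The hardest part will not be any single deep step---the conceptual content is entirely in Theorem~\ref{thm:mainresult2}---but rather the careful descent from the abstractly produced model $(Y',D')$ back to an explicit open subset of $Y^\circ$ while keeping track of the boundary $D'$ and of the finitely many contracted curves, together with the promotion, in case~(1), of fiberwise isotriviality of $f^\circ$ to an actual \'etale trivialization over the two-dimensional base. It is exactly this last point that obliges the \'etale covers $\wt U\to U$ and $\wt V\to V$ to enter the statement.
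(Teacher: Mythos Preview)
Your overall strategy matches the paper's: reduce via Theorem~\ref{thm:mainresult2} to a fibration $Y'\to C$ with one-dimensional base, classify the general log-fiber as elliptic, $\bC^*$, $\bP^1$, or $\bA^1$ according to the sign of $\kappa(Y^\circ)$, and then argue that the isotrivial family along the fibers descends (after an \'etale cover) to a family over the base curve.

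The difference lies in how the descent step is carried out. The paper packages it into the $\Isom$-scheme machinery of Section~\ref{sec:gluearama} (Lemma~\ref{lem:relglue}, Corollaries~\ref{cor:pulling-back-after-rel-etale} and~\ref{cor:family-push-forward}) and applies it uniformly: for the $\bP^1$, $\bA^1$, and $\bC^*$ fibers it invokes Corollary~\ref{cor:family-push-forward} directly; for the elliptic case it first base-changes along a general hyperplane multisection $\wt V\subset Y_\lambda$ to acquire a section of the fibration, and then applies Lemma~\ref{lem:relglue}. Your monodromy sketch is the topological shadow of exactly this construction: the finite \'etale cover $\Isom_T(Y,Z)\to T$ is what encodes the fiberwise monodromy you propose to kill. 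Two places in your sketch would need more care to stand on their own. First, in case~(2), ``fiberwise trivial along $\bA^1$'' does not by itself give ``pullback from $V$''; one must argue that the relevant $\Isom$-scheme, being finite \'etale over $V\times\bA^1$, is pulled back from $V$ and hence acquires a global section---this is what Corollary~\ref{cor:family-push-forward} and Remark~\ref{srem:family-push-forward} supply. Second, in the elliptic case your ``fiberwise isogeny'' is not well-posed without a zero-section on the elliptic fibration, and you do not say where the \'etale cover $\wt V\to V$ comes from; the paper's choice of a hyperplane multisection handles both issues simultaneously. None of this is a genuine obstruction---your plan is correct---but the $\Isom$-scheme formulation makes the argument uniform and spares you from managing monodromy along the fiber and over the base separately.
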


In order complete the description of families with non-maximal variation over
two-dimensional bases we include the following well-known statement. However,
we would like to point out that this is a much easier statement and follows by
simple abstract arguments, cf.~the proof of Lemma~\ref{lem:relglue}.

\begin{thm}[Description of the family in case of $\Var(f^\circ)=0$]
  Let $f^\circ: X^\circ \to Y^\circ$ be a smooth projective family of
  canonically polarized varieties, over a quasi-projective manifold
  $Y^\circ$. If $\Var({f^\circ}) = 0$, then there exists an open set $U
  \subseteq Y^\circ$ such that $X^\circ\resto U$ is isotrivial and further
  exists a finite \'etale cover $\wt U\to U$ such that $f^\circ_{\wt U}:
  X^\circ \times_U \wtilde U\to \wt U$ is trivial. \qed
\end{thm}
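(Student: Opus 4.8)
The plan is to obtain both assertions by soft arguments from the theory of the relative $\Isom$-functor for families of canonically polarized varieties, in the same spirit as the glueing argument used in the proof of Lemma~\ref{lem:relglue}. First I would unwind the hypothesis. Replacing $Y^\circ$ by a connected Zariski-open subset, which we keep calling $U$ and which will be the set claimed in the statement, the assumption $\Var(f^\circ)=0$ says that the induced moduli map $\mu\colon U \to \mathfrak M$ is set-theoretically constant. Hence every closed fibre of $f^\circ$ over $U$ is isomorphic to one fixed canonically polarized variety $F$; in other words $X^\circ\resto U$ is isotrivial, which is the first assertion. It remains to produce a finite étale cover of $U$ over which the family becomes trivial.

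For this, consider the relative isomorphism scheme
$$
\pi\colon I := \Isom_U\!\bigl(F\times_{\C}U,\ X^\circ\resto U\bigr)\longrightarrow U.
$$
Since $f^\circ$ is a smooth projective family with relatively ample canonical bundle, Grothendieck's theory shows that $I$ is representable by a separated $U$-scheme of finite type; concretely $I$ is the fibre product $U\times_{\mathfrak M}\Spec\C$ formed from $\mu$ and from the point $[F]\colon \Spec\C\to\mathfrak M$, and the universal object on $I$ is a tautological isomorphism $X^\circ\times_U I \cong F\times I$ over $I$. Deformation theory identifies the relative tangent space of $\pi$ over $y\in U$ with $H^0\bigl(X^\circ_y,T_{X^\circ_y}\bigr)$, and since $X^\circ_y\cong F$ is canonically polarized, $\Aut(F)$ is finite and $H^0(F,T_F)=0$, so $\pi$ is unramified; together with separatedness of the moduli functor (see the third paragraph) this forces $\pi$ to be finite étale. (Equivalently, since $\mu\resto U$ is constant it factors through the residual gerbe $B\Aut(F)\hookrightarrow\mathfrak M$, i.e. $I\to U$ is the $\Aut(F)$-torsor classified by $U\to B\Aut(F)$, which is finite étale because $\Aut(F)$ is finite.) Isotriviality makes every geometric fibre of $\pi$ non-empty, hence $\pi$ is surjective; as $U$ is connected, any connected component $\wt U$ of $I$ is finite étale and surjective over $U$. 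Restricting the universal isomorphism yields $X^\circ\times_U\wt U \cong F\times\wt U$ over $\wt U$, so $f^\circ_{\wt U}$ is the trivial family, and we are done.

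The only point that is not pure formalism is the finiteness (properness over the base) of $\pi\colon I\to U$: one must know that a flat degeneration of isomorphisms between canonically polarized varieties is again an isomorphism. This is precisely the separatedness of the moduli functor of canonically polarized varieties — equivalently, the statement that $\mathfrak M$ is a separated Deligne–Mumford stack — and I would simply quote it from the literature; it also follows from properness of the relevant component of the Hilbert scheme of graphs together with the preservation of ampleness of the canonical bundle in flat limits. Everything else here — representability and the unramifiedness of $\Isom$, and the trivialization over $\wt U$ — is routine, which is why this theorem is much easier than Theorems~\ref{thm:mainresult0}–\ref{thm:mainresult3}.
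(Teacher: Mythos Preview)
Your proposal is correct and follows essentially the same approach as the paper, which simply refers the reader to the $\Isom$-scheme argument of Lemma~\ref{lem:relglue} (and, behind it, Lemma~\ref{lem:dominant}). The only minor difference is in the justification of finiteness of $\pi\colon I\to U$: the paper's Lemma~\ref{lem:dominant} compares fibre lengths inside the Hilbert scheme over a curve, whereas you invoke separatedness of the moduli stack (equivalently Matsusaka--Mumford) directly, which has the advantage of working over a base of arbitrary dimension without further comment.
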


\subsection{Outline of proof, outline of paper}

The main results of this paper are shown in
Sections~\ref{sec:kinfty}--\ref{sec:kpos} where we consider the cases
$\kappa(Y^\circ) = -\infty$, $\kappa(Y^\circ) = 0$ and $\kappa(Y^\circ) > 0$
separately, the most difficult case being when $\kappa(Y^\circ) = 0$. To keep
the proofs readable, we have chosen to present many of the more technical
results separately in the preparatory
Sections~\ref{sec:notation}--\ref{sec:gluearama}. These may be of some
independent interest. The reader who is primarily interested in a broad
outline of the argument will likely want to take the technicalities on faith
and move directly to Sections~\ref{sec:kinfty}--\ref{sec:kpos} on the first
reading.

Section~\ref{sec:notation} introduces notation used in the remainder of the
present paper. In Section~\ref{sec:singularities}, we discuss certain classes
of singularities that appear in the minimal model program and recall the
Bogomolov vanishing result for log canonical threefolds. The standard
construction of the global index-one cover for good minimal models of Kodaira
dimension zero is recalled and summarized in Section~\ref{sec:indexcoverk0}.

Viehweg and Zuo have shown that the base of a family of positive variation
often carries an invertible sheaf of pluri-differentials whose Kodaira-Iitaka
dimension is at least the variation of the family. These \emph{Viehweg-Zuo
  sheaves}, which are central to our argumentation, are introduced and
discussed in Section~\ref{sec:VZ}.  The existence of a Viehweg-Zuo sheaf of
positive Kodaira-Iitaka dimension has strong consequences for the geometry if
the underlying space. These are discussed in Section~\ref{sec:VZ2}. We end the
preparatory part of the paper with Section~\ref{sec:gluearama} where we
discuss how families $f^\circ : X^\circ \to Y^\circ$ over a fibered base
$\pi^\circ : Y^\circ \to C^\circ$ that are isotrivial over $\pi^\circ$-fibers
often come from a family over $C^\circ$, at least after passing to an étale
cover.

\subsection{Acknowledgments}

We would like to thank Eckart Viehweg and Chengyang Xu for numerous
discussions that motivated the problem and helped to improve this paper.

 \part{TECHNIQUES}

\section{Notation and Conventions}
\label{sec:notation}

\subsection{Reflexive tensor operations}

When dealing with sheaves that are not necessarily locally free, we frequently
use square brackets to indicate taking the reflexive hull.

\begin{notation}[Reflexive tensor product]
  Let $Z$ be a normal variety and $\sA$ a coherent sheaf of
  $\O_Z$-modules. Given a number $n\in \bN$, set $\sA^{[n]} := (\sA^{\otimes
    n})^{**}$. If $\sA$ is reflexive of rank one, we say that $\sA$ is
  $\Q$-Cartier if there exists a number $n$ such that $\sA^{[n]}$ is
  invertible.
\end{notation}

We will later discuss the Kodaira dimension of singular pairs and the
Kodaira-Iitaka dimension of reflexive sheaves on normal spaces. Since this is
perhaps not quite standard, we recall the definition here.

\begin{notation}[Kodaira-Iitaka dimension of a sheaf]
  Let $Z$ be a normal projective variety and $\sA$ a reflexive sheaf of rank
  one on $Z$.  If $h^0\bigl(Z,\, \sA^{[n]}\bigr) = 0$ for all $n \in \mathbb
  N$, then we say that $\sA$ has Kodaira-Iitaka dimension $\kappa(\sA) :=
  -\infty$.  Otherwise, set
  $$
  M := \bigl\{ n\in \mathbb N \,|\, h^0\bigl(Z,\, \sA^{[n]}\bigr)>0\bigr\},
  $$
  recall that the restriction of $\sA$ to the smooth locus of $Z$ is locally
  free and consider the natural rational mapping
  $$
  \phi_n : Z \dasharrow \mathbb P\bigl(H^0\bigl(Z,\, \sA^{[n]}\bigr)^*\bigr)
  \text{ for each } n \in M.
  $$
  The Kodaira-Iitaka dimension of $\sA$ is then defined as
  $$
  \kappa(\sA) := \max_{n \in M} \bigl(\dim \overline{\phi_n(Z)}\bigr).
  $$
\end{notation}

\begin{notation}[Kodaira dimension of a quasi-projective variety]\label{not:Kodaira}
  If $Z^\circ$ is a quasi-projective manifold and $Z$ a smooth
  compactification such that $\Delta := Z \setminus Z^\circ$ is a divisor with
  at most simple normal crossings, define the Kodaira dimension of $Z^\circ$
  as $\kappa(Z^\circ) := \kappa \bigl(\sO_Z(K_Z+\Delta) \bigr)$. Recall the
  standard fact that this number is independent of the choice of the
  compactification.
\end{notation}

\subsection{Logarithmic pairs}

The following fundamental definitions of logarithmic geometry will be used in
the sequel.

\begin{defn}[Logarithmic pair]\label{def:everythinglog}
  A \emph{logarithmic pair} $(Z,\Delta)$ consists of a normal variety $Z$ and
  a reduced, but not necessarily irreducible Weil divisor $\Delta \subset Z$.
  A \emph{morphism of logarithmic pairs}, written as $\gamma: (\wtilde Z,
  \wtilde \Delta) \to (Z,\Delta)$, is a morphism $\gamma: \wtilde Z \to Z$
  such that $\gamma^{-1}(\Delta) = \wtilde \Delta$ set-theoretically.
\end{defn}

\begin{defn}[Snc pairs]\label{def:everythinglog2a}
  Let $(Z, \Delta)$ be a logarithmic pair, and $z \in Z$ a point. We say that
  $(Z, \Delta)$ is snc at $z$, if there exists a Zariski-open neighborhood $U$
  of $z$ such that $U$ is smooth and such that $\Delta \cap U$ has only simple
  normal crossings. The pair $(Z, \Delta)$ is snc if it is snc at all points.
  
  Given a logarithmic pair $(Z,\Delta)$, let $(Z,\Delta)_{\reg}$ be the
  maximal open set of $Z$ where $(Z,\Delta)$ is snc, and let
  $(Z,\Delta)_{\sing}$ be its complement, with the induced reduced subscheme
  structure.
\end{defn}

\begin{defn}[Log resolution]\label{def:everythinglog2b}
  A \emph{log resolution} of $(Z, \Delta)$ is a birational morphism of pairs
  $\pi: (\wtilde Z, \wtilde \Delta) \to (Z,\Delta)$ such that the
  $\pi$-exceptional set $\Exc(\pi)$ is of pure codimension one, such that
  $\bigl(\wtilde Z,\, \supp(\wtilde \Delta+ \Exc(\pi)) \bigr)$ is snc, and
  such that $\pi$ is isomorphic along $(Z, \Delta)_{\reg}$.
\end{defn}

If $(Z, \Delta)$ is a logarithmic pair, a log resolution is known to exist,
cf.~\cite{Kollar07}.

\subsection{Minimal model program}

We will use the definitions and apply the techniques of the minimal model
program frequently, sometimes without explicit references. On these occasions
the reader is referred to \cite{KM98} for background and details.

In particular, we will use the fact that the minimal model program asserts the
existence of \emph{extremal contractions} \cite[3.7, 3.31]{KM98} on
non-minimal varieties.  These extremal contractions come in three different
kinds: \emph{divisorial}, \emph{small}, and \emph{of fiber type}. The first
gives a birational morphism that contracts a divisor, the second leads to a
\emph{flip} \cite[2.8]{KM98}, and the third gives a fiber space.  Recall that
a fiber space $\pi : Y \to Z$ is called \emph{proper} if the general fiber $F$
is of dimension $0 < \dim F < \dim Y$. We will call an extremal contraction of
fiber type \emph{non-trivial} if the resulting fiber space is proper.
Finally, recall that extremal contractions of divisorial or fiber type have
relative Picard number one \cite[3.36]{KM98}.

Further note that since we are working in dimension at most $3$, we do not
need to appeal to the recent phenomenal advances in the Minimal Model Program
by Hacon-McKernan and Birkar-Cascini-Hacon-McKernan
\cite{MR2352762,BCHM06}. However, these results give us reasonable hope that
the methods here may extend to all dimensions.

\section{Singularities of the minimal model program}
\label{sec:singularities}

\subsection{Dlt singularities of index one}

If $(Z, \Delta)$ is an snc pair of dimension $\dim Z \leq 3$, the minimal
model program yields a birational map to a pair $(Z_\lambda, \Delta_\lambda)$,
where $Z_\lambda$ is $\mathbb Q$-factorial and $(Z_\lambda, \Delta_\lambda)$
is dlt ---see \cite[2.37]{KM98} for the definition of dlt. We remark for later
use that dlt pairs of index one are snc in codimension two.

\begin{lem}\label{lem:dltindexone}
  Let $(Z,\Delta)$ be a dlt pair of index one, i.e., a pair where $K_Z +
  \Delta$ is Cartier. Then
  \begin{equation}\label{eq:codimalongbdry}
    \codim_Z \bigl( (Z,\Delta)_{\sing} \cap \Delta \bigr) \geq 3.    
  \end{equation}
\end{lem}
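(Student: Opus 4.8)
The plan is to reduce to the case $\dim Z = 2$ by cutting with general hyperplanes, and then to analyse the minimal resolution of the resulting surface.

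\emph{Reduction to $\dim Z=2$.} The asserted inequality is equivalent to saying that $(Z,\Delta)$ is snc at every point of codimension at most two lying on $\Delta$. As $Z$ is normal it is smooth in codimension one, so at the generic point of any component of $\Delta$ the pair is a single smooth divisor on a smooth surface germ, hence snc; thus nothing is to be proved in codimension one. Suppose then, for contradiction, that $(Z,\Delta)_{\sing}\cap\Delta$ has an irreducible component $W$ of codimension two. Cutting $Z$ with $\dim Z-2$ general members of a very ample linear system yields a normal surface $S$ with reduced boundary $\Delta_S:=\Delta|_S$ and a point $p\in S\cap W$. By the standard Bertini-type statements $(S,\Delta_S)$ is again dlt; by adjunction $K_S+\Delta_S=(K_Z+\Delta)|_S$ is Cartier; and since $W\subseteq(Z,\Delta)_{\sing}$ and $W\subseteq\Delta$, the pair $(S,\Delta_S)$ is non-snc at $p$ while $p\in\Delta_S$ (a transversal slice of a singular point, resp.\ of a non-snc configuration of the boundary, remains singular, resp.\ non-snc). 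It therefore suffices to prove that a dlt surface pair $(S,\Delta_S)$ with $K_S+\Delta_S$ Cartier is snc at every point of $\Delta_S$.

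\emph{The case $S$ smooth at $p$.} If $S$ is smooth at $p\in\Delta_S$ but $(S,\Delta_S)$ is not snc there, then $m:=\mult_p\Delta_S\ge 2$ and $\Delta_S$ is not a transverse crossing of two smooth branches at $p$. Blowing up $p$ produces an exceptional curve $E$ whose centre $\{p\}$ lies in $(S,\Delta_S)_{\sing}$ and whose discrepancy is $a(E,S,\Delta_S)=1-m\le -1$, contradicting the definition of dlt. Hence $\Delta_S$ is snc at every smooth point of $S$ lying on it, and we may assume $S$ is singular at $p$.

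\emph{The case $S$ singular at $p$ --- the key point.} Let $\pi:\widetilde S\to S$ be the minimal resolution over a neighbourhood of $p$, with $\pi$-exceptional curves $E_1,\dots,E_k$ over $p$; recall that $(E_i\cdot E_j)$ is negative definite and that $K_{\widetilde S}\cdot E_i\ge 0$ for all $i$ by minimality. Writing
$$
K_{\widetilde S}+\widetilde\Delta_S=\pi^*(K_S+\Delta_S)+\sum_{i=1}^k a_iE_i
$$
with $\widetilde\Delta_S$ the strict transform, the definition of dlt gives $a_i>-1$ (each $E_i$ sits over the singular, hence non-snc, point $p$), while the Cartier hypothesis forces $\pi^*(K_S+\Delta_S)$ to be an integral divisor, so $a_i\in\bZ$ and thus $a_i\ge 0$. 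Set $N:=\sum a_iE_i=K_{\widetilde S}+\widetilde\Delta_S-\pi^*(K_S+\Delta_S)$, an effective $\pi$-exceptional divisor. Since $\pi^*(K_S+\Delta_S)\cdot E_j=0$, we get $N\cdot E_j=K_{\widetilde S}\cdot E_j+\widetilde\Delta_S\cdot E_j\ge 0$ for every $j$, so $N^2=\sum_j a_j\,(N\cdot E_j)\ge 0$; by negative definiteness this forces $N=0$. Then $\pi^*(K_S+\Delta_S)=K_{\widetilde S}+\widetilde\Delta_S$, and intersecting with $E_j$ yields $K_{\widetilde S}\cdot E_j+\widetilde\Delta_S\cdot E_j=0$ with both summands non-negative, whence $\widetilde\Delta_S\cdot E_j=0$ for all $j$. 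Thus the strict transform $\widetilde\Delta_S$ misses $\pi^{-1}(p)$, contradicting $p\in\Delta_S$. This contradiction finishes the proof. The surface estimate is short; the step that needs care is the reduction, where one must know that dlt-ness, the Cartier property, and non-snc-ness all survive passage to a general hyperplane section.
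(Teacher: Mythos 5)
Your proof is correct, and its skeleton is the same as the paper's: reduce to a surface pair by general hyperplane sections, then use the index-one hypothesis to make all discrepancies over non-snc points integral, so that the dlt bound $>-1$ upgrades to $\geq 0$. The difference lies in how the surface case is finished. The paper, having observed that the pair is canonical at its non-snc points, simply cites the classification of canonical surface pairs \cite[4.5]{KM98} to conclude that such points cannot lie on $\Delta$; you instead reprove exactly the piece of that classification you need: at a smooth point the blow-up discrepancy $1-\mult_p\Delta_S\leq -1$ contradicts dlt, and at a singular point the minimal resolution plus negative definiteness of the exceptional intersection form forces $N=\sum a_iE_i=0$ and $\wtilde\Delta_S\cdot E_j=0$, so the boundary misses the point. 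Your route buys self-containedness at the cost of length; the paper's is shorter but leans on the quoted classification. Two small points to tidy up. First, the displayed adjunction in your reduction is off: for $S=H_1\cap\dots\cap H_{\dim Z-2}$ one has $K_S+\Delta_S=\bigl(K_Z+\Delta+\sum_i H_i\bigr)\resto{S}$, not $(K_Z+\Delta)\resto{S}$; since the correction term is Cartier, the only fact you use --- that $K_S+\Delta_S$ is Cartier --- is unaffected. Second, the step you yourself flag, that non-snc-ness survives general slicing, is most cleanly justified the way the paper phrases its inductive step: if $(S,\Delta_S)$ were snc at $p$, then smoothness of $S$ at $p$ forces smoothness of $Z$ at $p$ because $S$ is cut out by Cartier divisors, and (using that general complete intersections keep the components of $\Delta$ irreducible and their pairwise intersections of the expected dimension) each component of $\Delta$ through $p$ would then be smooth there with transverse crossings, i.e.\ $(Z,\Delta)$ would be snc at $p$ --- the contrapositive of what you need.
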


\begin{subrem}
  It is important to note that $(Z, \Delta)$ has \emph{simple} normal
  crossings away from $(Z, \Delta)_{\sing}$, whereas having only normal
  crossings would give a much weaker result. This, for example, implies that
  the components of $\Delta$ are smooth in codimension $1$ which is not true
  for a boundary with only normal crossings, cf.~\cite[2.38]{KM98}.
\end{subrem}

\begin{proof}
  We will prove the statement by induction on the dimension.

  \smallskip

  \paragraph{\it Start of induction} First assume that $\dim Z=2$. Then by
  definition of dlt singularities, \cite[2.37]{KM98}, there exists a finite
  subset $T\subset Z$ such that $(Z, \Delta)_{\sing} \subseteq T$ and such
  that $Z$ is log terminal at the points of $T$, i.e., the discrepancy of any
  divisor $E$ that lies over $T$ is $a(E, Z, \Delta) > -1$.  But since $K_Z +
  \Delta$ is Cartier, this number must be an integer, so $a(E, Z, \Delta) \geq
  0$. This shows that $(Z, \Delta)$ is canonical at the points of
  $T$. Therefore it follows by \cite[4.5]{KM98} that $T \cap \Delta =
  \emptyset$. In particular,~\eqref{eq:codimalongbdry} holds.

  \smallskip

  \paragraph{\it Inductive step} Now let $Z$ be of arbitrary dimension, and
  let $H\subseteq Z$ be a general hyperplane section. Set $\Delta_H := \Delta
  \cap H$.  Since a Cartier divisor being smooth at a point implies that the
  ambient space is also smooth at that point, it follows, that for any $z\in
  H$, the pair $(H, \Delta_H)$ is snc at $z$ if and only if $(Z,\Delta)$ is
  snc at $z$. In other words, $(H, \Delta_H)_{\sing} = (Z, \Delta)_{\sing}
  \cap H$ and
  $$
  \codim_H \bigl( (H, \Delta_H)_{\sing} \cap \Delta_H \bigr) = \codim_Z \bigl(
  (Z, \Delta)_{\sing} \cap \Delta \bigr).
  $$
  Notice further that $(H, \Delta_H)$ is dlt of index one. The claim thus
  follows by induction.
\end{proof}

\subsection{Dlc singularities}
\label{sec:dlc}

Given an snc pair of Kodaira dimension zero, the minimal model program
terminates at a dlt pair $(Z, \Delta)$ where $\Delta$ is $\Q$-Cartier and $K_Z
+ \Delta$ is torsion. Much of the argumentation in Section~\ref{sec:k0} is
based on the following observation:
\begin{quotation}
  If $\Delta \not = \emptyset$, and $\varepsilon \in \Q^+$ sufficiently small,
  then $\bigl( Z, (1-\varepsilon)\Delta \bigr)$ is a dlt pair of Kodaira
  dimension $-\infty$. Therefore it admits at least one further extremal
  contraction.
\end{quotation}
Using the thinned down boundary to push the minimal model program further, we
end with a logarithmic pair $(Z', \Delta')$ that might no longer be dlt, but
still has manageable singularities.

\begin{defn}[Dlc singularities]
  A logarithmic pair $(Z', \Delta')$ is called \emph{dlc} if $(Z', \Delta')$
  is log canonical, $\Delta'$ is $\Q$-Cartier and for any sufficiently small
  positive number $\varepsilon \in \Q^+$, the pair $\bigl( Z',
  (1-\varepsilon)\Delta' \bigr)$ is dlt.
\end{defn}

Dlc singularities are of interest to us because sheaves of reflexive
differentials on dlc surface pairs enjoy good pull-back properties,
cf.~Theorem~\ref{thm:VZsheafextension2} below. For future reference, we recall
the relation between dlc and several other notions of singularity.

\begin{rem}[Relationship with other singularity classes]\label{rem:relation}
  By definition, a dlc pair $(Z, \Delta)$ is boundary-lc in the sense of
  \cite[Def.~3.6]{GKK08}. If $\dim Z = 2$ this implies that $(Z, \Delta)$ is
  finitely dominated by analytic snc pairs \cite[Lem.~3.9]{GKK08}.
\end{rem}

\subsection{Bogomolov-Sommese vanishing on singular spaces}

If $(Z, \Delta)$ is an snc pair, the well-known Bogomolov-Sommese vanishing
theorem asserts that for any number $1 \leq p \leq \dim Z$, any invertible
subsheaf $\sC \subseteq \Omega^{p}_Z(\log \Delta)$ has Kodaira-Iitaka
dimension at most $p$. See \cite[Sect.~6]{EV92} for a thorough
discussion. Much of the argumentation in this paper is based centrally on the
fact that similar results also hold for reflexive sheaves of differentials on
pairs with dlc, or more generally log canonical singularities.

The formulation of the general result we expect to be true is the following.

\begin{conjecture}[\protect{Bogomolov-Sommese vanishing for log canonical varieties}]
  \label{conjecture:Bvanishing}
  Let $(Z, \Delta)$ be a logarithmic pair and assume that $(Z, \Delta)$ is log
  canonical.  Let $\sA \subseteq \Omega^{[p]}_Z(\log \Delta)$ be any reflexive
  subsheaf of rank one. If $\sA$ is $\Q$-Cartier, then $\kappa(\sA) \leq p$.
\end{conjecture}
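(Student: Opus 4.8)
The plan is to reduce Conjecture~\ref{conjecture:Bvanishing} to the classical Bogomolov-Sommese vanishing theorem on an snc pair by passing to a log resolution and carefully controlling what happens to the reflexive subsheaf $\sA$. First I would choose a log resolution $\pi : (\wtilde Z, \wtilde \Delta) \to (Z, \Delta)$, where $\wtilde\Delta$ denotes the reduced total transform $\supp\bigl(\pi^{-1}_*\Delta + \Exc(\pi)\bigr)$, so that $(\wtilde Z, \wtilde\Delta)$ is an snc pair. The key point is that there is a natural pull-back map on reflexive logarithmic differentials: away from the $\pi$-exceptional locus and from $(Z,\Delta)_{\sing}$ the pull-back of a section of $\Omega^{[p]}_Z(\log\Delta)$ is a section of $\Omega^p_{\wtilde Z}(\log\wtilde\Delta)$, and since $\wtilde Z$ is smooth these sheaves are reflexive, so the map extends over the exceptional locus as well. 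Hence the inclusion $\sA \into \Omega^{[p]}_Z(\log\Delta)$ gives, after reflexive pull-back and taking the saturation, an invertible subsheaf $\wtilde\sA \subseteq \Omega^p_{\wtilde Z}(\log\wtilde\Delta)$. The Bogomolov-Sommese theorem on $(\wtilde Z, \wtilde\Delta)$ then yields $\kappa(\wtilde\sA) \leq p$.

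It remains to compare $\kappa(\sA)$ with $\kappa(\wtilde\sA)$. Since $\sA$ is $\Q$-Cartier, replacing $\sA$ by $\sA^{[m]}$ for suitable $m$ we may assume $\sA$ is invertible, and correspondingly work with $\Omega^{[p]}_Z(\log\Delta)^{[m]}$; this reduces us to showing $\kappa(\sA) = \kappa(\wtilde\sA)$ for an honest invertible sheaf. One inclusion is automatic: because $\pi$ is birational, $h^0\bigl(Z, \sA^{[n]}\bigr) = h^0\bigl(\wtilde Z, (\pi^*\sA)^{\otimes n}\bigr) \leq h^0\bigl(\wtilde Z, \wtilde\sA^{\otimes n}\bigr)$, so $\kappa(\sA) \leq \kappa(\wtilde\sA) \leq p$, which is exactly what we want. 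So in fact the comparison is not even an equality that needs to be proven — the trivial direction suffices. The upshot is that the proof splits into (a) constructing the reflexive pull-back map $\pi^{[*]}\Omega^{[p]}_Z(\log\Delta) \to \Omega^p_{\wtilde Z}(\log\wtilde\Delta)$ and (b) invoking classical Bogomolov-Sommese on the resolution together with the elementary inequality on spaces of sections.

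The main obstacle is step (a): producing the sheaf map on reflexive logarithmic differentials through a log resolution in the log canonical setting. This is precisely where the hypothesis \emph{log canonical} enters — one needs to know that a section of $\Omega^{[p]}_Z(\log\Delta)$ pulls back to a logarithmic (not merely rational, possibly with poles) $p$-form on $\wtilde Z$ with poles only along $\wtilde\Delta$. For log canonical pairs this is a nontrivial regularity statement; the honest control is over the smooth locus $(Z,\Delta)_{\reg}$, and one must argue that no extra poles are introduced over the exceptional divisors of $\pi$, using the log canonical discrepancy bound $a(E, Z, \Delta) \geq -1$. In the surface case, the relationship with boundary-lc pairs and the fact recalled in Remark~\ref{rem:relation} that $2$-dimensional dlc pairs are finitely dominated by analytic snc pairs gives an alternative route: pull $\sA$ back under such a dominating map, apply snc Bogomolov-Sommese upstairs, and descend the Kodaira-Iitaka bound using that the map is finite. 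For the general conjecture, however, one genuinely needs an extension theorem for reflexive log differentials of the type developed in \cite{GKK08}, which is why this is stated as a conjecture rather than a theorem; the cases actually used in the paper (dlc surface pairs, and the log canonical threefold situations) are the ones where such an extension statement is available.
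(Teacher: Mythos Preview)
The statement you are addressing is a \emph{conjecture}, and the paper does not prove it; it only cites the special case $\dim Z \leq 3$ from \cite{GKK08} as Theorem~\ref{thm:Bvanishing}. So there is no proof in the paper to compare your proposal against.

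That said, your analysis is accurate. The reduction you describe --- pull back to a log resolution, saturate to get an invertible subsheaf of $\Omega^p_{\wtilde Z}(\log\wtilde\Delta)$, apply classical Bogomolov--Sommese, and use the trivial inequality $h^0(Z,\sA^{[n]}) \leq h^0(\wtilde Z,\wtilde\sA^{\otimes n})$ --- is exactly the strategy by which the known cases are established, and you have correctly isolated the only nontrivial step: the existence of the pull-back map $\pi^{[*]}\Omega^{[p]}_Z(\log\Delta) \to \Omega^p_{\wtilde Z}(\log\wtilde\Delta)$, i.e., the extension of reflexive logarithmic $p$-forms over the exceptional locus of a log resolution. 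You are also right that this is precisely where the log canonical hypothesis must enter, and that the unavailability of such an extension theorem in arbitrary dimension at the time of writing is the reason the statement is a conjecture. (The extension result needed was later proven in full generality by Greb--Kebekus--Kov\'acs--Peternell, settling the conjecture; but within the scope of this paper only the $\dim Z \leq 3$ case from \cite{GKK08} is available.) One minor quibble: your remark about replacing $\sA$ by $\sA^{[m]}$ to make it invertible is slightly off, since $\sA^{[m]}$ lives in a reflexive tensor power of $\Omega^{[p]}_Z(\log\Delta)$ rather than in $\Omega^{[mp]}_Z(\log\Delta)$, so one has to be a bit more careful about which ambient sheaf one is applying Bogomolov--Sommese to; but this is a technicality that does not affect the overall shape of the argument.
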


At this time Conjecture~\ref{conjecture:Bvanishing} has been verified with the
additional assumption $\dim Z\leq 3$ in \cite{GKK08}:

\begin{thm}[\protect{Bogomolov-Sommese vanishing for log canonical threefolds,
    \cite[Thm.~1.4]{GKK08}}]\label{thm:Bvanishing}
  Let $(Z, \Delta)$ be a logarithmic pair of dimension $\dim Z \leq 3$ and
  assume $(Z, \Delta)$ is log canonical. Let $\sA \subseteq
  \Omega^{[p]}_Z(\log \Delta)$ be any reflexive subsheaf of rank one. If $\sA$
  is $\Q$-Cartier, then $\kappa(\sA) \leq p$.  \qed
\end{thm}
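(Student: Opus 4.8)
The plan is to reduce the statement to the known snc case by means of a log resolution, and then to control the reflexive subsheaf on the resolution. Let $\pi : (\wt Z, \wt \Delta) \to (Z, \Delta)$ be a log resolution, so that $(\wt Z, \supp(\wt \Delta + \Exc(\pi)))$ is snc. Given a reflexive subsheaf $\sA \subseteq \Omega^{[p]}_Z(\log\Delta)$ of rank one which is $\Q$-Cartier, one wants to produce an invertible subsheaf $\wt{\sA} \subseteq \Omega^p_{\wt Z}(\log \wt D)$ on the resolution — where $\wt D$ is the snc divisor $\supp(\wt \Delta + \Exc(\pi))$ — satisfying $\kappa(\wt{\sA}) \geq \kappa(\sA)$. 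The classical Bogomolov–Sommese theorem on the snc pair $(\wt Z, \wt D)$ then gives $\kappa(\wt{\sA}) \leq p$, and hence $\kappa(\sA) \leq p$.

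First I would pass to the smooth locus. Let $Z_0 := Z \setminus (Z,\Delta)_{\sing}$, an open subset whose complement has codimension $\geq 2$; since $\sA$ and $\Omega^{[p]}_Z(\log\Delta)$ are reflexive, everything is determined on $Z_0$, and since $\sA$ is reflexive of rank one and $Z$ is normal, sections of $\sA^{[n]}$ over $Z$ correspond to sections over $Z_0$. On $Z_0 \cap Y^\circ$ (away from $\Delta$) the pull-back $\pi^*$ gives an honest inclusion of $p$-forms; the content is to check that the pulled-back forms, a priori meromorphic along $\wt D$, have at worst logarithmic poles. This is precisely where the log canonical hypothesis enters: a reflexive log $p$-form on a log canonical pair pulls back to a log $p$-form on a log resolution. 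For $p = \dim Z$ this is essentially the definition of log canonicity via discrepancies $\geq -1$; for general $p$ the needed statement is the content of the extension/pull-back results for reflexive differentials on log canonical pairs — in dimension $\leq 3$ this follows from the results of \cite{GKK08} (and is exactly the kind of statement the authors cite that paper for). This produces a subsheaf $\wt{\sA}' \subseteq \Omega^p_{\wt Z}(\log \wt D)$ agreeing with $\pi^*\sA$ over $\pi^{-1}(Z_0)$; saturating it inside $\Omega^p_{\wt Z}(\log\wt D)$ yields the invertible sheaf $\wt{\sA}$ I want, and saturation only increases the Kodaira–Iitaka dimension.

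The remaining point is the inequality $\kappa(\wt{\sA}) \geq \kappa(\sA)$. Because $\pi$ is an isomorphism over $(Z,\Delta)_{\reg}$ and in particular over a big open set, and because $\wt{\sA}$ restricted to that locus is $\pi^*\sA$, one gets for each $n$ an injection $H^0\bigl(Z, \sA^{[n]}\bigr) \hookrightarrow H^0\bigl(\wt Z, \wt{\sA}^{\otimes n}\bigr)$ by pulling back sections and using that a section of a reflexive sheaf extends across a codimension-$\geq 2$ set; comparing the associated rational maps $\phi_n$ on the big open set where $\pi$ is an isomorphism shows $\dim\overline{\phi_n(\wt Z)} \geq \dim\overline{\phi_n(Z)}$, whence $\kappa(\wt{\sA}) \geq \kappa(\sA)$. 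Then $\kappa(\sA) \leq \kappa(\wt{\sA}) \leq p$ by the snc Bogomolov–Sommese vanishing \cite[Sect.~6]{EV92}, and the proof is complete.

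The main obstacle is the logarithmic pull-back step: showing that a $\Q$-Cartier reflexive log $p$-form on a log canonical pair of dimension $\leq 3$ acquires only logarithmic poles along the exceptional divisor of a log resolution. The hypothesis that $\sA$ is $\Q$-Cartier is what lets one reduce, after an index-one cover, to controlling an actual differential form whose pole order is bounded by the discrepancy, and the dimension restriction $\dim Z \leq 3$ is what makes the relevant extension theorem for reflexive differentials available. Everything else — passing to the smooth locus, saturating, comparing spaces of sections — is routine reflexive-sheaf bookkeeping.
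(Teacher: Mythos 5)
First, a point of context: the paper does not prove this statement at all --- it is quoted verbatim from \cite[Thm.~1.4]{GKK08} and closed with a \emph{qed} symbol, so there is no internal proof to compare against. Your outline (log resolution, transport $\sA$ to an invertible subsheaf $\wt{\sA} \subseteq \Omega^p_{\wt Z}(\log \wt D)$, apply snc Bogomolov--Sommese from \cite[Sect.~6]{EV92}) is indeed the strategy of the cited proof, but be careful about what you invoke from \cite{GKK08}: the legitimate input is their \emph{extension theorem} for reflexive logarithmic forms on log canonical pairs of dimension $\leq 3$ (pull-backs acquire at worst log poles along $\Exc(\pi)$), which is a separate result there; citing ``the results of \cite{GKK08}'' wholesale skirts circularity, since the statement you are proving \emph{is} Thm.~1.4 of that paper. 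Also, your heuristic that log canonicity controls pole order ``via discrepancies'' and an index-one cover is only meaningful for $p=\dim Z$; for $p<\dim Z$ there is no such elementary reduction, and the extension theorem is genuinely the hard content.

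There is, moreover, a concrete gap in your comparison step $\kappa(\wt{\sA}) \geq \kappa(\sA)$. You pull back a section of $\sA^{[n]}$ over the big open set of $Z$ where $\pi$ is an isomorphism and $\sA$ is invertible, and then claim it extends to a section of $\wt{\sA}^{\otimes n}$ ``across a codimension-$\geq 2$ set.'' But the complement of that locus in $\wt Z$ contains $\Exc(\pi)$, which has codimension \emph{one}, so reflexivity of $\wt{\sA}^{\otimes n}$ gives you nothing: a priori the pulled-back section could have poles along the exceptional divisor (and the extension theorem does not apply here either, since sections of $\sA^{[n]}$ for $n\geq 2$ are pluri-forms, not $p$-forms). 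This is exactly where the $\Q$-Cartier hypothesis on $\sA$ must be used: writing $\sA \cong \sO_Z(D_{\sA})$ with $nD_{\sA}$ Cartier for suitable $n$, effectivity is preserved under pull-back of the ($\Q$-)Cartier divisor, giving $H^0\bigl(Z, \sO_Z(knD_{\sA})\bigr) \into H^0\bigl(\wt Z, \sO_{\wt Z}(k\pi^*(nD_{\sA}))\bigr)$ and hence the non-decrease of Kodaira--Iitaka dimension under reflexive pull-back along the birational map, after which saturation inside $\Omega^p_{\wt Z}(\log \wt D)$ only increases $\kappa$. Your write-up mentions $\Q$-Cartier only in connection with the (irrelevant) index-one-cover heuristic, so as it stands the key inequality $\kappa(\wt{\sA}) \geq \kappa(\sA)$ is unjustified; with the $\Q$-Cartier pull-back argument inserted at this point, the proof sketch becomes the standard one.
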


\section{Global index-one covers for varieties of Kodaira dimension zero}
\label{sec:indexcoverk0}

In this section, we consider good minimal models of pairs with Kodaira
dimension $0$. We briefly recall the main properties of the global index-one
cover, as described in \cite[2.52]{KM98} or \cite[Sect.~3.6f]{Reid87}.

\begin{prop}\label{prop:index-cover}
  Let $(Z, \Delta)$ be a logarithmic pair.  Assume that the log canonical
  divisor $K_Z + \Delta$ is torsion (in particular, it is $\mathbb
  Q$-Cartier), i.e., assume that there exists a number $m \in \mathbb N^+$
  such that $\sO_Z\bigl(m\cdot(K_Z + \Delta)\bigr) \cong \sO_Z$.  Then there
  exists morphism of pairs $\eta: (Z', \Delta') \to (Z, \Delta)$, called the
  \emph{index-one cover}, with the following properties.
  \begin{enumerate-p}
  \item\ilabel{il:431} The morphism $\eta$ is finite. It is \'etale wherever
    $Z$ is smooth. In particular, $\eta$ is \'etale in codimension one.
  \item\ilabel{il:432} $K_{Z'}+ \Delta'$ is Cartier and $\O_{Z'}(K_{Z'}+
    \Delta') \simeq \O_{Z'}$.
  \item\ilabel{il:433} If $(Z, \Delta)$ is dlt, then $(Z', \Delta')$ is dlt as
    well.  If furthermore $z' \in Z'$ is a point where $(Z', \Delta')$ is not
    snc, then $(Z', \Delta')$ is canonical at $z'$.
  \end{enumerate-p}
\end{prop}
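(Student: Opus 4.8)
The construction of $\eta$ is standard: since $m\cdot(K_Z+\Delta)$ is trivial, a choice of isomorphism $\sO_Z\bigl(m\cdot(K_Z+\Delta)\bigr)\cong\sO_Z$ equips the reflexive rank-one sheaf $\sL:=\sO_Z\bigl(K_Z+\Delta\bigr)$ with the structure of an $\sO_Z$-algebra $\sR:=\bigoplus_{i=0}^{m-1}\sL^{[i]}$, and one sets $Z':=\Spec_Z\sR$ with $\eta$ the natural projection. The plan is to verify the three listed properties in turn, reducing each to a statement about a neighbourhood of the codimension-two skeleton, which is where the dlt hypothesis gives extra control. Throughout I would freely use the fact, recorded just before Lemma~\ref{lem:dltindexone}, that a dlt pair of index one is snc in codimension two, together with Lemma~\ref{lem:dltindexone} itself when the boundary is present.

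\smallskip

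First I would dispose of \iref{il:431}: away from the finitely many components of $Z_{\sing}$ and the non-Cartier locus of $\sL$, the sheaf $\sL$ is an $m$-torsion line bundle, so the cyclic cover $\eta$ is étale there by the classical description of cyclic covers \cite[2.52]{KM98}. Since $Z$ is normal, $Z_{\sing}$ has codimension at least two, hence $\eta$ is étale in codimension one; normality of $Z'$ then follows since a cover étale in codimension one over a normal base, with $Z'$ satisfying $S_2$ (a direct summand of $\eta_*\sO_{Z'}$ being reflexive), is normal. For \iref{il:432}, the pullback formula for cyclic covers gives $\eta^{[*]}\sL^{[m]}\cong\sO_{Z'}$ canonically — more precisely, the tautological section trivialises the pullback of the defining sheaf — and the ramification formula $K_{Z'}+\Delta'=\eta^*(K_Z+\Delta)$ (valid because $\eta$ is étale in codimension one, so there is no ramification divisor) shows $\sO_{Z'}\bigl(K_{Z'}+\Delta'\bigr)\cong\sO_{Z'}$; in particular $K_{Z'}+\Delta'$ is Cartier.

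\smallskip

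The substantive point is \iref{il:433}. That $(Z',\Delta')$ is again dlt when $(Z,\Delta)$ is dlt follows from the fact that $\eta$ is étale in codimension one and crepant: discrepancies are preserved under such covers \cite[5.20]{KM98}, and the dlt condition, being checked on a log resolution together with the snc-in-codimension-two description, is inherited. The remaining assertion — that at a point $z'$ where $(Z',\Delta')$ fails to be snc the pair is in fact \emph{canonical} — is where I expect the real work to lie. The idea is that such a bad point $z'$ must lie over a point $z\in Z$ of codimension at least three: indeed in codimension $\le 2$ a dlt index-one pair is snc (using Lemma~\ref{lem:dltindexone} along $\Delta$ and snc-in-codimension-two off $\Delta$), and an étale-in-codimension-one cover of an snc pair is snc in the same codimension, since étaleness propagates the local product structure. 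So $z'\notin\Delta'$ (again by Lemma~\ref{lem:dltindexone} applied to $(Z',\Delta')$, which we have just shown is dlt of index one), and near $z'$ the statement reduces to: a dlt — hence klt, since the boundary is locally empty — point of index one is canonical. This is the elementary discrepancy argument already used in the surface case of Lemma~\ref{lem:dltindexone}: klt forces every exceptional discrepancy $>-1$, and Cartier-ness of $K_{Z'}$ forces it to be an integer, hence $\ge 0$.

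\smallskip

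\textbf{Main obstacle.} The delicate step is controlling where $(Z',\Delta')$ can fail to be snc — i.e. making precise that failure of snc is confined to the preimage of codimension $\ge 3$ and disjoint from $\Delta'$ — rather than the discrepancy bookkeeping, which is routine once that localisation is in place. One must be careful that "snc" here means \emph{simple} normal crossings (as the subremark after Lemma~\ref{lem:dltindexone} stresses), so that an étale cover genuinely preserves it in low codimension; a purely formal "nc in codimension two" would not suffice, and it is exactly Lemma~\ref{lem:dltindexone} and the snc-in-codimension-two property of dlt index-one pairs that licenses the reduction.
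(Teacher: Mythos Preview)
Your treatment of \iref{il:431} and \iref{il:432} is fine and in line with the paper's (which simply cites \cite[2.50--53]{KM98}). The preservation of dlt in \iref{il:433} is also handled correctly via \cite[5.20]{KM98}, together with the observation that $\eta$ is \'etale over the snc locus of $(Z,\Delta)$, exactly as the paper does.

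Where you diverge is in the canonical assertion of \iref{il:433}, and here you make the problem harder than it is --- and introduce gaps in the process. You try to first show that any non-snc point $z'$ lies off $\Delta'$, by arguing that the non-snc locus of a dlt index-one pair sits entirely in codimension $\geq 3$ and then invoking Lemma~\ref{lem:dltindexone}. Neither step is sound as written. Off the boundary, a dlt index-one pair has canonical singularities, but these need not be smooth in codimension two: a threefold can carry a curve of $A_1$ points, which is codimension two and certainly not snc. And Lemma~\ref{lem:dltindexone} only gives $\codim_{Z'}\bigl((Z',\Delta')_{\sing}\cap\Delta'\bigr)\geq 3$, not that this intersection is empty; in dimension three there may well be isolated non-snc points sitting on $\Delta'$.

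The paper's argument sidesteps all of this. Once $(Z',\Delta')$ is known to be dlt, the \emph{definition} of dlt says that at any non-snc point $z'$ every exceptional discrepancy satisfies $a(E,Z',\Delta')>-1$. Since $K_{Z'}+\Delta'$ is Cartier by \iref{il:432}, these discrepancies are integers, hence $\geq 0$, and the pair is canonical at $z'$. That is the entire argument --- two lines, no localisation away from the boundary, no appeal to Lemma~\ref{lem:dltindexone}. The ``main obstacle'' you identify is not an obstacle: the integer-discrepancy bookkeeping you call routine \emph{is} the proof, and it applies with the boundary present just as well as without it.
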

\begin{proof}
  Properties~\iref{il:431} and \iref{il:432} follow directly from the
  construction, cf.~\cite[2.50--53]{KM98}. To prove~\iref{il:433} assume for
  the remainder of the proof that $(Z, \Delta)$ dlt. We need to show that
  $(Z', \Delta')$ is dlt as well.  Observe that if $z' \in Z'$ is a point such
  that $(Z,\Delta)$ is snc at $\eta(z')$, then $(Z', \Delta')$ is snc at
  $z'$. The definition of dlt, together with the fact that discrepancies only
  increase under finite morphisms, \cite[5.20]{KM98}, then immediately yields
  the claim.
  
  Finally, if $z' \in Z'$ is any point where $(Z', \Delta')$ is not snc, then
  the discrepancy of any divisor $E$ that lies over $z'$ is $a(E, Z', \Delta')
  > -1$.  But since $K_{Z'} + \Delta'$ is Cartier, this number must be an
  integer, so $a(E, Z', \Delta') \geq 0$.  It follows that the pair $(Z',
  \Delta')$ is canonical at $z'$, hence \iref{il:433} is shown.
\end{proof}

\begin{cor}\label{cor:bdrynotempty}
  Under the conditions of Proposition~\ref{prop:index-cover}, if $\gamma:
  (\wtilde Z, \wtilde \Delta) \to (Z', \Delta')$ is any log resolution, then
  $\kappa(K_{\wtilde Z} + \wtilde \Delta) = 0$.
\end{cor}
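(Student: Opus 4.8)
The plan is to use Proposition~\ref{prop:index-cover} to reduce to the case where $K_{Z'}+\Delta'$ is Cartier and $\O_{Z'}(K_{Z'}+\Delta')\simeq\O_{Z'}$, and then track the canonical sheaf through the log resolution $\gamma$. Writing $K_{\wtilde Z}+\wtilde\Delta = \gamma^*(K_{Z'}+\Delta') + \sum_i a_i E_i$, where the $E_i$ run over the $\gamma$-exceptional prime divisors together with any components of $\gamma^{-1}(\Delta')$ not dominating components of $\Delta'$, the discrepancy coefficients $a_i$ are exactly the log discrepancies $a(E_i, Z', \Delta')$ for the exceptional divisors, while for strict transforms the coefficient is $-1$ (balancing the $+1$ in $\wtilde\Delta$). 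By property~\iref{il:433} of Proposition~\ref{prop:index-cover}, $(Z',\Delta')$ is dlt, hence log canonical, so all $a_i \geq -1$; thus $K_{\wtilde Z}+\wtilde\Delta - \gamma^*(K_{Z'}+\Delta')$ is an effective $\Q$-divisor supported on the $\gamma$-exceptional locus. Since $\gamma^*(K_{Z'}+\Delta') \sim_{\Q} 0$ by property~\iref{il:432}, this already gives $\kappa(K_{\wtilde Z}+\wtilde\Delta) \geq 0$, via pushing forward sections of $\O(m(K_{\wtilde Z}+\wtilde\Delta))$.

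For the reverse inequality $\kappa(K_{\wtilde Z}+\wtilde\Delta)\leq 0$, I would push sections down: $\gamma_* \O_{\wtilde Z}\bigl(m(K_{\wtilde Z}+\wtilde\Delta)\bigr)$ is a subsheaf of (in fact, I expect, equal to) $\O_{Z'}\bigl(m(K_{Z'}+\Delta')\bigr)$, because $\gamma$ is birational and the extra effective exceptional correction $\sum a_i E_i$ contributes nothing after taking $\gamma_*$ — any section of $\O(m(K_{\wtilde Z}+\wtilde\Delta))$ restricts, on the open set where $\gamma$ is an isomorphism, to a section of $\O(m(K_{Z'}+\Delta'))$, and $Z'$ being normal this extends. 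Hence $h^0\bigl(\wtilde Z, m(K_{\wtilde Z}+\wtilde\Delta)\bigr) \leq h^0\bigl(Z', m(K_{Z'}+\Delta')\bigr) = h^0(Z',\O_{Z'}) = 1$ for every $m$, and an analogous computation shows the image of the pluricanonical map is a point. Combined with the lower bound, $\kappa(K_{\wtilde Z}+\wtilde\Delta)=0$.

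The only genuinely delicate point is making sure the bookkeeping of the boundary is correct: $\wtilde\Delta$ is the reduced total transform $\gamma^{-1}(\Delta')$ as a morphism of logarithmic pairs (Definition~\ref{def:everythinglog}), so its components include both the strict transform of $\Delta'$ and those exceptional divisors lying over $\Delta'$. One must check that the coefficient of such an exceptional $E_i$ in $\wtilde\Delta - \gamma^*\Delta'$, namely $1 - (\text{coefficient of }E_i\text{ in }\gamma^*\Delta')$, combines with $-K$-part to give precisely $a(E_i,Z',\Delta')$, which is $\geq -1$ by log canonicity — so effectivity of the correction term is preserved even along the boundary. This is routine once one writes it out, but it is where care is needed; I do not expect any serious obstacle beyond this. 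Alternatively, one can bypass the divisor bookkeeping entirely by invoking the standard fact that Kodaira dimension $\kappa(K_Z+\Delta)$ of a log canonical pair is a birational invariant of the pair under log resolution, together with $\kappa(\O_{Z'}) = 0$; I would likely present the short direct argument above rather than cite this.
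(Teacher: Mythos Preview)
Your overall strategy---show that $K_{\wtilde Z}+\wtilde\Delta$ is linearly equivalent to an effective $\gamma$-exceptional divisor, then conclude $\kappa=0$ by pushing sections down to $Z'$---is correct and is exactly what the paper does. However, there is a genuine gap in your effectivity claim.

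You invoke only the log canonical consequence of Proposition~\ref{prop:index-cover} (``dlt, hence log canonical, so all $a_i\geq -1$'') and then assert that $K_{\wtilde Z}+\wtilde\Delta-\gamma^*(K_{Z'}+\Delta')$ is effective. This does not follow. For a $\gamma$-exceptional prime divisor $E$ whose center does \emph{not} lie in $\Delta'$, the coefficient of $E$ in $\wtilde\Delta$ is zero, so the coefficient of $E$ in the difference is exactly the discrepancy $a(E,Z',\Delta')$; log canonicity alone only gives $a(E)\geq -1$, not $\geq 0$. Such divisors can certainly occur: $Z'$ may be singular away from $\Delta'$, and the log resolution must resolve those points. (Incidentally, your bookkeeping for exceptional $E$ lying over $\Delta'$ is also slightly off---the coefficient there is $a(E)+1$, not $a(E)$---but that error is harmless since $a(E)+1\geq 0$ does follow from log canonicity.)

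The fix, and this is precisely the paper's argument, is to use the \emph{second} clause of the third item in Proposition~\ref{prop:index-cover}: the pair $(Z',\Delta')$ is \emph{canonical} wherever it is not snc. Since by Definition~\ref{def:everythinglog2b} the log resolution $\gamma$ is an isomorphism over $(Z',\Delta')_{\reg}$, every $\gamma$-exceptional divisor has center in the non-snc locus, hence $a(E,Z',\Delta')\geq 0$ for all of them. Effectivity is then immediate. Your push-forward argument for the upper bound $\kappa\leq 0$ is fine; the paper leaves that step implicit.
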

\begin{proof}
  Since $(Z', \Delta')$ is canonical wherever it is not snc, the definition of
  canonical singularities, \cite[2.26, 2.34]{KM98}, implies that $K_{\wtilde
    Z} +\wtilde \Delta$ is represented by an effective, $\gamma$-exceptional
  divisor.
\end{proof}

\section{Viehweg-Zuo sheaves}
\label{sec:VZ}

\subsection{Definition of Viehweg-Zuo sheaves}

In the setup of Theorem~\ref{thm:mainresult2} and in a few other cases,
Viehweg and Zuo have shown in \cite[Thm.~1.4]{VZ02} that there exists a number
$n \gg 0$ and an invertible sheaf $\sA \subseteq \Sym^n \Omega^1_Y(\log D)$
whose Kodaira-Iitaka dimension is at least the variation of $f^\circ$, i.e.,
$\kappa(\sA) \geq \Var(f^\circ)$. The existence of this sheaf is a cornerstone
of our argumentation.

For technical reasons, it turns out to be more convenient to view $\sA$ as a
subsheaf of the tensor product, via the injection $\Sym^n \Omega^1_Y(\log D)
\into \bigl( \Omega^1_Y(\log D) \bigr)^{\otimes n}$.  It is also advantageous
to extend studying these sheaves on singular varieties and then it is natural
to allow rank one reflexive sheaves instead of restricting to line
bundles. These considerations give rise to the following definition.

\begin{defn}[Viehweg-Zuo sheaf]\label{def:VZ}
  Let $(Z,\Delta)$ be a logarithmic pair. A reflexive sheaf $\sA$ of rank $1$
  is called a \emph{Viehweg-Zuo sheaf} if there exists a number $n \in \mathbb
  N$ and an embedding $\sA \subseteq \bigl( \Omega^1_Z(\log \Delta)
  \bigr)^{[n]}$.
\end{defn}

\subsection{Pushing forward and pulling back }

We often need to compare Viehweg-Zuo sheaves on different birational models of
a pair. The following elementary statement shows that the push-forward of a
Viehweg-Zuo sheaf under a birational map of pairs is often again a Viehweg-Zuo
sheaf.

\begin{lem}[Push forward of Viehweg-Zuo sheaves]\label{lem:pushdownA}
  Let $(Z, \Delta)$ be a logarithmic pair and assume that there exists a
  Viehweg-Zuo sheaf $\sA \subseteq \bigl( \Omega^1_Z(\log \Delta)
  \bigr)^{[n]}$.  If $\lambda : Z \dasharrow Z'$ is a birational map whose
  inverse does not contract any divisor, $Z'$ is normal and $\Delta'$ is the
  (necessarily reduced) cycle-theoretic image of $\Delta$, then there exists a
  Viehweg-Zuo sheaf $\sA' \subseteq \bigl( \Omega^1_{Z'}(\log \Delta')
  \bigr)^{[n]}$ of Kodaira-Iitaka dimension $\kappa(\sA')\geq \kappa(\sA)$.
\end{lem}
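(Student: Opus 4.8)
The plan is to exploit the fact that $\lambda$ is a birational map whose inverse contracts no divisors, so that $\lambda$ restricts to an isomorphism between big open subsets of $Z$ and $Z'$. First I would choose open subsets $U \subseteq Z$ and $U' \subseteq Z'$ with $\codim_Z(Z \setminus U) \geq 2$, $\codim_{Z'}(Z' \setminus U') \geq 2$, such that $\lambda$ restricts to an isomorphism $\lambda|_U : U \xrightarrow{\sim} U'$. Concretely, $U'$ is obtained by removing from $Z'$ the (codimension $\geq 2$) indeterminacy locus of $\lambda^{-1}$ together with the image of the $\lambda$-exceptional locus; the hypothesis that $\lambda^{-1}$ contracts no divisor guarantees $U'$ is big. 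Shrinking further, I would also arrange that $U$ and $U'$ are smooth, and that $\Delta \cap U$ maps isomorphically onto $\Delta' \cap U'$ — this uses that $\Delta'$ is defined as the cycle-theoretic image of $\Delta$, so $\Delta'$ and the strict transform of $\Delta$ agree on the big open set where $\lambda$ is an isomorphism (their difference is supported on the $\lambda$-exceptional locus, which is disjoint from $U'$).

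Next, since $U \cong U'$ as logarithmic pairs $(U, \Delta \cap U) \cong (U', \Delta' \cap U')$, restriction gives an isomorphism
$$
\bigl(\Omega^1_Z(\log \Delta)\bigr)^{[n]}\big|_{U} \;\cong\; \bigl(\Omega^1_{Z'}(\log \Delta')\bigr)^{[n]}\big|_{U'}.
$$
The Viehweg-Zuo inclusion $\sA \subseteq \bigl(\Omega^1_Z(\log\Delta)\bigr)^{[n]}$ restricts to an inclusion over $U$, which transports via $\lambda|_U$ to an inclusion $\sA|_U \hookrightarrow \bigl(\Omega^1_{Z'}(\log\Delta')\bigr)^{[n]}\big|_{U'}$ of sheaves on $U'$. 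Now I define $\sA'$ to be the reflexive hull on $Z'$ of the pushforward $(\iota_{U'})_* \bigl(\lambda|_U\bigr)_*(\sA|_U)$, where $\iota_{U'}: U' \hookrightarrow Z'$ is the open immersion; equivalently $\sA' := \bigl((\iota_{U'})_*(\sA|_{U'})\bigr)^{**}$ after the identification. Because $Z'$ is normal and $U'$ has complement of codimension $\geq 2$, $\sA'$ is the unique reflexive extension of $\sA|_{U'}$, hence reflexive of rank one. The inclusion $\sA|_{U'} \hookrightarrow \bigl(\Omega^1_{Z'}(\log\Delta')\bigr)^{[n]}\big|_{U'}$ of sheaves whose targets and sources are reflexive on a normal variety extends uniquely, by taking reflexive hulls and using Hartogs-type extension, to an inclusion of sheaves $\sA' \hookrightarrow \bigl(\Omega^1_{Z'}(\log\Delta')\bigr)^{[n]}$ on all of $Z'$; that the extended map is still injective follows because injectivity can be checked on the big open set $U'$, or simply because a nonzero map of rank-one reflexive sheaves into a torsion-free sheaf is automatically injective. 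Thus $\sA'$ is a Viehweg-Zuo sheaf.

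For the Kodaira-Iitaka dimension, I would observe that for every $m \in \mathbb{N}$ the sheaves $\sA^{[m]}$ and $\lambda_* \bigl(\sA^{[m]}\big|_U\bigr)^{**} = (\sA')^{[m]}$ agree on the big open sets $U$, $U'$, and taking global sections is insensitive to modifications in codimension $\geq 2$ on normal varieties: $H^0\bigl(Z, \sA^{[m]}\bigr) = H^0\bigl(U, \sA^{[m]}\big|_U\bigr)$ and $H^0\bigl(Z', (\sA')^{[m]}\bigr) = H^0\bigl(U', (\sA')^{[m]}\big|_{U'}\bigr)$, and these two are identified via $\lambda|_U$. Hence the section rings coincide and the associated rational maps $\phi_m$ have images of the same dimension, giving $\kappa(\sA') = \kappa(\sA) \geq \kappa(\sA)$ — in fact equality, though only the inequality is claimed. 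The one point requiring a little care — and the main (though modest) obstacle — is verifying that $\Delta'$ really does agree with the strict transform of $\Delta$ on $U'$, i.e.\ that no component of $\Delta'$ is "created" along the locus removed from $Z'$; this is exactly where the cycle-theoretic-image hypothesis and the no-exceptional-divisor-for-$\lambda^{-1}$ hypothesis are used, and it is what makes the logarithmic structures match up so that the restriction isomorphism of log cotangent sheaves above is legitimate.
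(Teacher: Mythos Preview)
Your approach is essentially the paper's own: identify a big open subset of $Z'$ on which $\lambda^{-1}$ is an open immersion, transport $\sA$ across, and take reflexive hulls. The construction of $\sA'$ and the verification that it embeds into $\bigl(\Omega^1_{Z'}(\log\Delta')\bigr)^{[n]}$ are fine, since these only use that $U'$ is big in $Z'$.

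There is, however, one genuine slip. You assert that the open set $U\subseteq Z$ also has complement of codimension $\geq 2$, and you use this to conclude $H^0\bigl(Z,\sA^{[m]}\bigr)=H^0\bigl(U,\sA^{[m]}|_U\bigr)$ and hence $\kappa(\sA')=\kappa(\sA)$. This is false in general: take $\lambda$ to be the blow-up of a smooth point. Then $\lambda^{-1}$ contracts no divisor, $U'\subseteq Z'$ is big, but the corresponding $U\subseteq Z$ is the complement of the exceptional divisor and has codimension-one complement. In such cases one can easily have $\kappa(\sA')>\kappa(\sA)$.

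Fortunately this does not damage the lemma, which only claims the inequality. The fix is to replace your equality by the trivial injection: since $\sA^{[m]}$ is reflexive (hence torsion-free), restriction gives
\[
H^0\bigl(Z,\sA^{[m]}\bigr)\;\hookrightarrow\;H^0\bigl(U,\sA^{[m]}|_U\bigr)\;\cong\;H^0\bigl(U',(\sA')^{[m]}|_{U'}\bigr)\;=\;H^0\bigl(Z',(\sA')^{[m]}\bigr),
\]
the last equality using that $U'$ is big in the normal variety $Z'$. This yields $h^0\bigl(Z',(\sA')^{[m]}\bigr)\geq h^0\bigl(Z,\sA^{[m]}\bigr)$ for all $m$, hence $\kappa(\sA')\geq\kappa(\sA)$, which is exactly what the paper proves and all that is asserted.
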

\begin{proof}
  The assumption that $\lambda^{-1}$ does not contract any divisors and the
  normality of $Z'$ guarantee that $\lambda^{-1}: Z' \dasharrow Z$ is a
  well-defined embedding over an open subset $U \subseteq Z'$ whose complement
  has codimension $\codim_{Z'} (Z' \setminus U) \geq 2$, cf.~Zariski's main
  theorem \cite[V~5.2]{Ha77}.  In particular, $\Delta'\resto{U} = \bigl(
  \lambda^{-1}\resto{U} \bigr)^{-1}(\Delta)$. Let $\iota: U \into Z'$ denote
  the inclusion and set $\sA' := \iota_* \bigl( (\lambda^{-1}\resto{U})^{*}\sA
  \bigr)$. We obtain an inclusion of sheaves, $\sA' \subseteq \bigl(
  \Omega^1_{Z'}(\log \Delta') \bigr)^{[n]}$. By construction, we have that
  $h^0\bigl(Z',\, \sA'^{[m]}\bigr) \geq h^0(Z,\, \sA^{[m]})$ for all $m>0$,
  hence $\kappa(\sA') \geq \kappa(\sA)$.
\end{proof}

If $Z$ is a singular space with desingularization $\pi : \wtilde Z \to Z$, it
follows almost by definition that any differential $\sigma \in H^0 \bigl(Z,\,
\Omega^p_Z \bigr)$ pulls back to a differential $\pi^*(\sigma) \in H^0
\bigl(\wtilde Z,\, \Omega^p_{\wtilde Z} \bigr)$,
cf.~\cite[II~Prop.8.11]{Ha77}. However, if $\sigma$ is a reflexive
differential, i.e., if $\sigma \in H^0 \bigl(Z,\, \Omega^{[p]}_Z \bigr)$, it
is not all clear ---and generally false--- that $\pi^*(\sigma)$ can be
interpreted as a differential on $\wtilde Z$. Likewise, if $(Z, \Delta)$ is a
logarithmic pair with log resolution $\pi: (\wtilde Z, \wtilde \Delta) \to (Z,
\Delta)$ and $\sA \subseteq \bigl( \Omega^1_Z(\log \Delta) \bigr)^{[n]}$ a
Viehweg-Zuo sheaf, it is generally not possible to interpret the reflexive
pull-back $\pi^{[*]}(\sA)$ as a Viehweg-Zuo sheaf on $(\wtilde Z, \wtilde
\Delta)$. However, if the pair $(Z, \Delta)$ is log canonical, the extension
theorems for differential forms studied in \cite{GKK08} show that an
interpretation of $\pi^{[*]}(\sA)$ as a Viehweg-Zuo sheaf often exists.  The
following theorem is an immediate consequence of Remark~\ref{rem:relation} and
\cite[Thm.~8.1]{GKK08}. It summarizes the results of \cite{GKK08} that are
relevant for our line of argumentation.

\begin{thm}[\protect{Extension of Viehweg-Zuo sheaves, \cite[Thm.~8.1]{GKK08}}]\label{thm:VZsheafextension2} 
  Let $(Z,\Delta)$ be a dlc pair of dimension $\dim Z\leq 2$, and assume that
  there exists a Viehweg-Zuo sheaf $\sA$ with inclusion $\iota : \sA \into
  \bigl( \Omega^1_Z(\log \Delta) \bigr)^{[n]}$. If $\pi: (\wtilde Z, \wtilde
  \Delta) \to (Z, \Delta)$ is a log resolution, and
  $$
  E := \text{largest reduced divisor contained in }\pi^{-1}(\Delta) \cup
  \Exc(\pi),
  $$
  then there exists an invertible Viehweg-Zuo sheaf $\sC \subseteq \bigl(
  \Omega^1_{\wtilde Z}(\log E) \bigr)^{[n]}$ with the following property. For
  an arbitrary $m\in\bN$, the inclusion pulls back to give a sheaf morphism
  that factors through $\sC^{\otimes m}$,
  $$
  \bar \iota^{[m]} : \pi^{[*]} \left(\sA^{[m]} \right) \into \sC^{\otimes m}
  \subseteq \bigl( \Omega^1_{\wtilde Z}(\log E) \bigr)^{[m\cdot n]}.
  $$
  In particular, $\kappa(\sC) \geq \kappa(\sA)$. \qed
\end{thm}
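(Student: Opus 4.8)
The plan is to reduce the statement to the known extension results of \cite{GKK08} via the dictionary provided by Remark~\ref{rem:relation}. First I would observe that since $(Z,\Delta)$ is dlc of dimension at most $2$, Remark~\ref{rem:relation} tells us that $(Z,\Delta)$ is boundary-lc in the sense of \cite[Def.~3.6]{GKK08}, and moreover is finitely dominated by analytic snc pairs by \cite[Lem.~3.9]{GKK08}. This is precisely the hypothesis needed to invoke the extension theorem \cite[Thm.~8.1]{GKK08} for the log resolution $\pi : (\wtilde Z, \wtilde\Delta) \to (Z,\Delta)$, with the reduced divisor $E$ as defined in the statement.

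Next I would feed the Viehweg-Zuo sheaf $\sA$ into that machinery. Given the inclusion $\iota : \sA \into \bigl( \Omega^1_Z(\log\Delta) \bigr)^{[n]}$, the sheaf $\bigl(\Omega^1_Z(\log\Delta)\bigr)^{[n]}$ is a reflexive tensor power of the sheaf of reflexive logarithmic differentials, so $\sA$ is a rank-one reflexive subsheaf of an $n$-fold reflexive tensor power. Applying \cite[Thm.~8.1]{GKK08} to each of the $n$ tensor factors and taking reflexive tensor products yields a sheaf morphism $\pi^{[*]}\bigl(\Omega^1_Z(\log\Delta)\bigr)^{[n]} \to \bigl(\Omega^1_{\wtilde Z}(\log E)\bigr)^{[n]}$ that is an isomorphism over the open locus where $\pi$ is an isomorphism and where the pair is snc. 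Composing with $\pi^{[*]}\iota$ and reflexivizing gives an inclusion $\bar\iota : \pi^{[*]}(\sA) \into \bigl(\Omega^1_{\wtilde Z}(\log E)\bigr)^{[n]}$; I would then set $\sC$ to be the saturation of the image of $\bar\iota$ inside a suitable rank-one subsheaf, or more directly take $\sC$ to be the invertible sheaf through which the Segre-type embedding factors, namely the reflexive pull-back $\pi^{[*]}(\sA)$ itself pushed into its saturation. To get the factorization through $\sC^{\otimes m}$ for arbitrary $m$, I would note that $\pi^{[*]}(\sA^{[m]})$ is reflexive of rank one and the induced map $\bar\iota^{[m]}$ lands in the $m$-th power of the image line bundle; since over the big open set both sides agree and $\wtilde Z$ is smooth so that saturations of subsheaves of locally free sheaves are locally free, $\sC^{\otimes m}$ is the natural target.

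The claim $\kappa(\sC) \geq \kappa(\sA)$ then follows exactly as in Lemma~\ref{lem:pushdownA}: for each $m$ with $h^0(Z, \sA^{[m]}) > 0$, a nonzero section pulls back under $\pi$ and extends (since $\pi$ is birational and $\wtilde Z$ is smooth, one uses that $\pi_* \sC^{\otimes m}$ contains $\sA^{[m]}$ on the big open set, hence everywhere by reflexivity) to a nonzero section of $\sC^{\otimes m}$, so $h^0(\wtilde Z, \sC^{\otimes m}) \geq h^0(Z, \sA^{[m]})$; comparing Iitaka-fibration dimensions gives the inequality. The main obstacle is verifying that the extension theorem of \cite{GKK08} really applies to reflexive \emph{tensor powers} and not merely to $\Omega^1$ — i.e., checking that the extension property is compatible with taking reflexive tensor products over a log resolution. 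I expect this is handled cleanly because the extension assertion in \cite[Thm.~8.1]{GKK08} is functorial in the sense that it provides, over the snc locus downstairs, an honest isomorphism of locally free sheaves that automatically extends to an isomorphism of their tensor powers, and the rank-one subsheaf $\sA$ only sees this locus up to codimension two; everything else is bookkeeping about reflexive hulls on normal varieties.
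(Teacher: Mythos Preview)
Your approach is correct and matches the paper's: the paper gives no separate proof but states that the result is an immediate consequence of Remark~\ref{rem:relation} together with \cite[Thm.~8.1]{GKK08}, which is exactly the reduction you carry out. Your residual worry about whether the extension theorem applies to reflexive tensor powers rather than just $\Omega^1$ is unnecessary, since \cite[Thm.~8.1]{GKK08} is already formulated for Viehweg-Zuo sheaves (rank-one subsheaves of $(\Omega^1)^{[n]}$) and produces the invertible sheaf $\sC$ with the stated factorization directly; the construction of $\sC$ and the tensor-power bookkeeping you sketch are part of that cited result, not something you need to redo.
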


\subsection{The reduction lemma}

Like regular differentials, logarithmic differentials come with a normal
bundle, and the corresponding restriction sequences, cf.~\cite[2.3]{EV92},
\cite[Lem.~2.13]{KK08} and the references there. Since Viehweg-Zuo sheaves
live in tensor products of the sheaf of differentials, this does not
immediately translate into a sequence for a given Viehweg-Zuo sheaf. This
makes the following lemma useful in the sequel.

\begin{lem}[Reduction lemma]\label{lem:reduction}
  Let $Z$ be a reduced irreducible variety, $\sE$, $\sF$, $\sG$, $\sH$ locally
  free sheaves, and $\sA$ a rank one torsion-free sheaf on $Z$.  Assume that
  there exists a short exact sequence
  \begin{equation}\label{eq:squce}
    0 \longrightarrow \sF \longrightarrow \sE \longrightarrow \sG \longrightarrow 0.    
  \end{equation}
  Then
  \begin{enumerate-p}
    \setcounter{enumi}{\value{equation}} 
  \item If there exists an inclusion $\sA\into \sE$, then either $\sA\into
    \sF$ or $\sA\into \sG$.
    \label{item:1}
  \item If for some $m\in\bN$, there exists an inclusion $\sA\into \sH\otimes
    \sE^{\otimes m}$, then there exists a $p\in \bN$, $0\leq p\leq m$ such
    that $\sA\into \sH\otimes\sF^{\otimes p}\otimes \sG^{\otimes m-p}$.
    \label{item:2}
  \item If for some $m\in\bN$, there exists an inclusion $\sA\into
    \sE^{\otimes m}$, and $\sF\simeq \sO_Z$ (respectively $\sG\simeq \sO_Z$),
    then there exists a $p\in \bN$, $0\leq p\leq m$ such that $\sA\into
    \sG^{\otimes p}$ (respectively $\sA\into \sF^{\otimes p}$).
    \label{item:3}
  \end{enumerate-p}
\end{lem}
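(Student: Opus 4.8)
The plan is to reduce everything to the case $m=1$, for which part \ref{item:1} is the essential content, and then bootstrap to the general tensor case by an induction on $m$. For \ref{item:1}, suppose $\sA \into \sE$ is given. Composing with $\sE \onto \sG$ gives a map $\sA \to \sG$. If this composition is nonzero, then since $\sA$ is a rank one torsion-free sheaf on the irreducible variety $Z$, the map $\sA \to \sG$ is injective (a nonzero map out of a rank one torsion-free sheaf on an integral scheme has torsion kernel, hence zero kernel), so $\sA \into \sG$. If the composition is zero, then $\sA \into \sE$ factors through $\ker(\sE \to \sG) = \sF$, giving $\sA \into \sF$. This dichotomy is the whole of \ref{item:1}.

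For \ref{item:2}, proceed by induction on $m$. The case $m=0$ is trivial (take $p=0$). For the inductive step, write $\sE^{\otimes m} = \sE \otimes \sE^{\otimes(m-1)}$ and tensor the sequence \eqref{eq:squce} with the locally free sheaf $\sH \otimes \sE^{\otimes(m-1)}$; since these are locally free, tensoring preserves exactness, yielding a short exact sequence
$$
0 \longrightarrow \sH \otimes \sF \otimes \sE^{\otimes(m-1)} \longrightarrow \sH \otimes \sE^{\otimes m} \longrightarrow \sH \otimes \sG \otimes \sE^{\otimes(m-1)} \longrightarrow 0.
$$
Applying \ref{item:1} to the inclusion $\sA \into \sH \otimes \sE^{\otimes m}$, we land in one of the two outer terms. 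If $\sA \into \sH \otimes \sG \otimes \sE^{\otimes(m-1)}$, apply the induction hypothesis with $\sH$ replaced by $\sH \otimes \sG$ to obtain $\sA \into \sH \otimes \sG \otimes \sF^{\otimes p'} \otimes \sG^{\otimes(m-1-p')}$ for some $0 \le p' \le m-1$; this is of the required form with $p = p'$. If instead $\sA \into \sH \otimes \sF \otimes \sE^{\otimes(m-1)}$, apply the induction hypothesis with $\sH$ replaced by $\sH \otimes \sF$ to get $\sA \into \sH \otimes \sF \otimes \sF^{\otimes p'} \otimes \sG^{\otimes(m-1-p')}$, which is of the required form with $p = p'+1 \le m$. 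Either way the claim follows.

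Part \ref{item:3} is the special case of \ref{item:2} with $\sH = \sO_Z$ and one of $\sF$, $\sG$ isomorphic to $\sO_Z$: if $\sF \simeq \sO_Z$, then \ref{item:2} gives $\sA \into \sF^{\otimes p} \otimes \sG^{\otimes(m-p)} \simeq \sG^{\otimes(m-p)}$, and $m-p$ ranges over $\{0,\dots,m\}$; symmetrically if $\sG \simeq \sO_Z$. There is essentially no obstacle here — the only point requiring care is the torsion-free/irreducibility hypothesis, which is used exactly once, in the dichotomy argument for \ref{item:1}, to ensure that a nonzero map from the rank one torsion-free sheaf $\sA$ is automatically injective; this is why $Z$ is assumed reduced and irreducible. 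The bookkeeping in the induction for \ref{item:2} (keeping track of which auxiliary locally free sheaf plays the role of $\sH$ at each stage) is routine but should be written out carefully.
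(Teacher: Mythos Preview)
Your proof is correct and follows essentially the same route as the paper's: the dichotomy argument for \ref{item:1} using torsion-freeness of $\sA$, the induction on $m$ for \ref{item:2} by tensoring \eqref{eq:squce} with $\sH\otimes\sE^{\otimes(m-1)}$ and absorbing the extra $\sF$ or $\sG$ factor into $\sH$, and the reduction of \ref{item:3} to \ref{item:2} with $\sH=\sO_Z$. The only cosmetic difference is that you start the induction at $m=0$ rather than $m=1$.
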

\begin{proof}
  Suppose $\sA \into \sE$ and let $\sK=\ker[\sA\to\sG]\subseteq \sA$.  If
  $\sA\to\sG$ is injective at the general point of $Z$, then $\sK$ is a
  torsion sheaf and hence zero, so $\sA\into\sG$.  Since $\rk\sA=1$, if
  $\sA\to\sG$ is not injective at the general point, then it is zero.
  However, then $\factor \sA.\sK.\subseteq \sG$ is a torsion sheaf and hence
  zero, so $\sA\into \sF$.  This proves
  (\ref{lem:reduction}.\ref{item:1}). Taking $\sH=\sO_Z$ it is easy to see
  that (\ref{lem:reduction}.\ref{item:3}) is a special case of
  (\ref{lem:reduction}.\ref{item:2}). To prove
  (\ref{lem:reduction}.\ref{item:2}), we use induction.

  \smallskip

  \paragraph{\it Start of induction} If $m=1$, assertion
  (\ref{lem:reduction}.\ref{item:2}) follows from applying
  (\ref{lem:reduction}.\ref{item:1}) to the short exact sequence obtained by
  tensoring~\eqref{eq:squce} with $\sH$,
  $$
  0\to \sH\otimes\sF\to \sH\otimes\sE\to \sH\otimes\sG\to 0.
  $$
  Note that if $m=1$, then either $p = 0$ or $m-p = 0$.

  \smallskip

  \paragraph{\it Induction step} Now assume that the statement is true for all
  numbers $m'<m$. Consider the short exact sequence obtained by
  tensoring~\eqref{eq:squce} with $\sH \otimes \sE^{\otimes (m-1)}$,
  $$
  0\to\sH\otimes\sF\otimes \sE^{\otimes (m-1)} \to\sH\otimes\sE^{\otimes m}
  \to \sH\otimes\sG\otimes\sE^{\otimes (m-1)}\to 0.
  $$
  Applying (\ref{lem:reduction}.\ref{item:1}) for this short exact sequence
  yields that either $\sA\into (\sH\otimes\sF) \otimes \sE^{\otimes (m-1)}$ or
  $\sA \into (\sH\otimes\sG) \otimes \sE^{\otimes (m-1)}$. Setting $\sH' :=
  \sH\otimes\sF$ or $\sH' := \sH \otimes \sG$, respectively, and applying the
  induction hypothesis to the sequence
  $$
  0 \to \sH' \otimes \sF \otimes \sE^{\otimes (m-2)} \to \sH' \otimes
  \sE^{\otimes (m-1)} \to \sH' \otimes \sG \otimes \sE^{\otimes (m-2)} \to 0,
  $$
  we obtain a number $p\in \bN$, $0\leq p\leq m-1$ such that either $\sA\into
  (\sH\otimes\sF)\otimes\sF^{\otimes p}\otimes \sG^{\otimes m-1-p}$ or
  $\sA\into (\sH\otimes\sG)\otimes\sF^{\otimes p}\otimes \sG^{\otimes
    m-1-p}$. This proves (\ref{lem:reduction}.\ref{item:2}).
\end{proof}

\section{Viehweg-Zuo sheaves on minimal models}
\label{sec:VZ2}

The existence of a Viehweg-Zuo sheaf of positive Kodaira-Iitaka dimension
clearly has consequences for the geometry of the underlying space. In case the
underlying space is the end product of the minimal model program, we summarize
the two most important consequences below, when $\kappa = -\infty$ and $\kappa
= 0$.

\subsection{The Picard-number of minimal models with non-positive Kodaira dimension}

The following theorem will be used later to show that a given pair is a
Mori-Fano fiber space. This will turn out to be a crucial step in the proof of
our main results.

\begin{thm}\label{thm:b2thm}
  Let $(Z,\Delta)$ be a log canonical logarithmic pair where $Z$ is a
  projective $\Q$-factorial variety of dimension at most $3$. Assume that the
  following holds:
  \begin{enumerate-p}
  \item there exists a Viehweg-Zuo sheaf $\sA \subseteq \bigl( \Omega^1_Z(\log
    \Delta) \bigr)^{[n]}$ of positive Kodaira-Iitaka dimension, and
  \item\ilabel{il:c} the anti log canonical divisor $-(K_Z + \Delta)$ is
    nef.
  \end{enumerate-p}
  Then the Picard number of $Z$ is greater than one, $\rho(Z) > 1$.
\end{thm}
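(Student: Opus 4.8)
The plan is to argue by contradiction: suppose $\rho(Z) = 1$. Since $Z$ is $\Q$-factorial of dimension at most $3$ and $-(K_Z+\Delta)$ is nef, hypothesis \iref{il:c} together with $\rho(Z)=1$ forces $-(K_Z+\Delta)$ to be either numerically trivial or ample. I would treat these two possibilities separately, the common theme being that in each case I can manufacture a reflexive rank-one subsheaf of some $\Omega^{[p]}_Z(\log\Delta)$ with Kodaira-Iitaka dimension strictly larger than $p$, contradicting the Bogomolov-Sommese vanishing theorem for log canonical threefolds (Theorem~\ref{thm:Bvanishing}).

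First I would set up the descent from the tensor power to an actual sheaf of reflexive $p$-forms. The Viehweg-Zuo sheaf gives an embedding $\sA \into \bigl(\Omega^1_Z(\log\Delta)\bigr)^{[n]}$ with $\kappa(\sA) > 0$. Taking a log resolution $\pi:(\wt Z,\wt\Delta)\to(Z,\Delta)$ and using the extension machinery (the reflexive pull-back survives because $(Z,\Delta)$ is log canonical; on the smooth model $\bigl(\Omega^1_{\wt Z}(\log \wt\Delta)\bigr)^{\otimes n}$ admits the standard filtration whose graded pieces are the sheaves $\Omega^{p}_{\wt Z}(\log\wt\Delta)$ tensored with various twists), the Reduction Lemma~\ref{lem:reduction} lets me conclude that $\sA$ — or rather a suitable reflexive modification — embeds into some $\Omega^{[p]}_Z(\log\Delta)$ for a well-defined $p$ with $1\le p\le n$; here I would take $p$ minimal with this property. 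By Bogomolov-Sommese, $\kappa(\sA)\le p$.

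The heart of the argument is to improve this to a contradiction by exhibiting a subsheaf of $\Omega^{[p-1]}_Z(\log\Delta)$ of Kodaira-Iitaka dimension $\ge \kappa(\sA)$, using that $\rho(Z)=1$. In the numerically trivial case, $K_Z+\Delta \equiv 0$, so $\det \Omega^{[p]}_Z(\log\Delta)$-type considerations let me contract $\sA$ against a reflexive $(p-1)$-form coming from a nonzero section related to the log canonical class (or, dually, wedge with an appropriate form); the point is that $\sA \otimes \sO_Z\bigl(-(K_Z+\Delta)\bigr)^{[\text{something}]}$ remains effective with the same Iitaka dimension, and its image lands in a lower exterior power, contradicting minimality of $p$. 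In the ample case $-(K_Z+\Delta)$ is ample, and since $\rho(Z)=1$ every divisor is numerically a multiple of it; then the positivity of $-(K_Z+\Delta)$ forces $\sA$ to have negative degree against the ample generator while $\kappa(\sA)>0$ forces $\sA$ to be pseudo-effective, the direct contradiction. I expect the delicate point — and the main obstacle — to be the passage between the smooth resolution and the singular model: one must check that the subsheaf produced by the Reduction Lemma on $\wt Z$ descends to a reflexive subsheaf of $\Omega^{[\bullet]}_Z(\log\Delta)$ of no smaller Kodaira-Iitaka dimension, which is exactly where the log canonicity of $(Z,\Delta)$ and the extension theorem \cite[Thm.~8.1]{GKK08} (in the form of Theorem~\ref{thm:VZsheafextension2}, or its threefold analogue) must be invoked carefully, keeping track of which exceptional divisors enter the boundary.
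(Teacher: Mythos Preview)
Your proposal has a genuine gap at the very first substantive step: the Reduction Lemma does \emph{not} let you pass from an inclusion $\sA \into \bigl(\Omega^1_Z(\log\Delta)\bigr)^{[n]}$ to an inclusion into some $\Omega^{[p]}_Z(\log\Delta)$. The Reduction Lemma~\ref{lem:reduction} requires as input a short exact sequence of locally free sheaves, but the $n$-fold tensor power $\bigl(\Omega^1\bigr)^{\otimes n}$ has no natural filtration whose graded pieces are the exterior powers $\Omega^p$. (Its Schur decomposition involves all Schur functors $S_\lambda$ for partitions $\lambda$ of $n$, not just the column partitions.) The ``standard filtration'' you invoke on the resolution does not exist, so the minimal $p$ you try to define is not well-defined, and the subsequent argument collapses. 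Your ample-case shortcut is also wrong: ampleness of $-(K_Z+\Delta)$ only says that $\det\Omega^1_Z(\log\Delta)$ has negative degree on curves, not that every rank-one subsheaf of a tensor power does; a bundle of negative degree can perfectly well contain a subsheaf of positive degree.

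The paper's argument avoids this entirely. Assuming $\rho(Z)=1$, restrict to a general complete intersection curve $C$ lying in the snc locus. The restriction $\Omega^1_Z(\log\Delta)\resto{C}$ has degree $(K_Z+\Delta)\cdot C \le 0$ by nefness of $-(K_Z+\Delta)$, yet the restriction $\sA\resto{C}$ sits inside its $n$-th tensor power with positive degree, so $\Omega^1_Z(\log\Delta)\resto{C}$ cannot be anti-nef and hence $\Omega^{[1]}_Z(\log\Delta)$ is not semistable. Its maximal destabilizing subsheaf $\sB$ then has positive slope and rank $p<\dim Z$; since $\rho(Z)=1$ and $Z$ is $\Q$-factorial, $\det\sB$ is $\Q$-ample, giving a big rank-one subsheaf of $\Omega^{[p]}_Z(\log\Delta)$, contradicting Bogomolov--Sommese vanishing (Theorem~\ref{thm:Bvanishing}). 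The key idea you are missing is that the destabilizing subsheaf of $\Omega^{[1]}_Z(\log\Delta)$ itself --- rather than any manipulation of the Viehweg--Zuo sheaf inside the tensor power --- is what produces the forbidden subsheaf of an exterior power.
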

\begin{proof}
  We argue by contradiction and assume that $\rho(Z)=1$. Let $C \subseteq Z$
  be a general complete intersection curve. Since $C$ is general, it avoids
  the singular locus $(Z,\Delta)_{\sing}$.  By~\iref{il:c}, the restriction
  $\Omega^1_Z(\log \Delta)\resto {C}$ is a vector bundle of non-positive
  degree,
  \begin{equation}\label{eq:degneg}
    \deg \Omega^1_Z(\log \Delta)\resto C = (K_Z+\Delta).C \leq 0.
  \end{equation}
  
  We claim that the restriction $\Omega^1_Z(\log \Delta)\resto C$ is not
  anti-nef, i.e., that the dual vector bundle $\sT_Z(-\log \Delta)\resto C$ is
  not nef. Equivalently, we claim that $\Omega^1_Z(\log \Delta)\resto C$
  admits an invertible subsheaf of positive degree. Indeed, if
  $\Omega^1_Z(\log \Delta)\resto C$ was anti-nef, then none of its products
  $\bigl( \Omega^1_Z(\log \Delta)\resto C \bigr)^{[n]}$ could contain a
  subsheaf of positive degree.  However, since $C$ is general, the restriction
  of the Viehweg-Zuo sheaf to $C$ is a locally free subsheaf $\sA\resto C
  \subseteq \bigl( \Omega^1_Z(\log \Delta)\resto C \bigr)^{[n]}$ of positive
  Kodaira-Iitaka dimension, and hence of positive degree. This proves the
  claim.
  
  As a consequence of the claim and of Equation~\eqref{eq:degneg}, we obtain
  that $\Omega^{[1]}_Z(\log \Delta)$ is not semi-stable and if $\sB \subseteq
  \Omega^{[1]}_Z(\log \Delta)$ denotes the maximal destabilizing subsheaf,
  then its slope $\mu(\sB)$ is positive. The assumption that $\rho(Z)=1$ and
  the $\mathbb Q$-factoriality of $Z$ then guarantees that $\det \sB$ is a
  $\mathbb Q$-Cartier and $\mathbb Q$-ample sheaf of $p$-forms. Notice that by
  its choice the rank of $\sB$ has to be strictly less than the rank of
  $\Omega^{[1]}_Z(\log \Delta)$, hence $p < \dim Z$.  However, this leads to a
  contradiction. Because $\sB$ is $\bQ$-ample, it follows that $\kappa(\det
  \sB) = \dim Z$ violating the Bogomolov-Sommese Vanishing
  Theorem~\ref{thm:Bvanishing}.
\end{proof}

In the case when $Z$ is a surface, this theorem immediately gives a criterion
to guarantee that Viehweg-Zuo sheaves of positive Kodaira-Iitaka dimension
cannot exist.

\begin{cor}\label{cor:VZonFanoSurface}
  Let $(Z, \Delta)$ be a projective, logarithmic dlt surface pair where
  $-(K_Z+\Delta)$ is $\Q$-ample. If $\sA$ is any Viehweg-Zuo sheaf on $Z$,
  then its Kodaira-Iitaka dimension is non-positive, i.e.~$\kappa(\sA) \leq
  0$.
\end{cor}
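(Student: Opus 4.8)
The plan is to deduce Corollary~\ref{cor:VZonFanoSurface} from Theorem~\ref{thm:b2thm} by passing to a suitable birational model where that theorem applies. The first step is to dispose of the trivial case: if $\rho(Z)=1$, then Theorem~\ref{thm:b2thm} applies directly (the hypotheses are met since $-(K_Z+\Delta)$ is $\Q$-ample, hence nef, and a dlt pair is log canonical), and it asserts $\rho(Z)>1$, a contradiction; so any Viehweg-Zuo sheaf on such a $Z$ must have $\kappa(\sA)\leq 0$. The content of the corollary is therefore really about the case $\rho(Z)>1$, where Theorem~\ref{thm:b2thm} gives no information, so the argument must proceed differently.

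First I would argue by contradiction: suppose $\sA\subseteq\bigl(\Omega^1_Z(\log\Delta)\bigr)^{[n]}$ is a Viehweg-Zuo sheaf with $\kappa(\sA)\geq 1$. The idea is to run the minimal model program for the pair $(Z,(1-\varepsilon)\Delta)$ for $\varepsilon\in\Q^+$ small. Because $-(K_Z+\Delta)$ is $\Q$-ample, the pair has Kodaira dimension $-\infty$, so the MMP terminates in a Mori fiber space $\phi:Z'\to S$ with $\dim S<\dim Z'=2$, hence $\dim S\in\{0,1\}$. At each step the pair stays dlt (for small $\varepsilon$) and log canonical; on a surface the steps are divisorial contractions, which do not contract any divisor under the inverse map, so by Lemma~\ref{lem:pushdownA} the push-forward $\sA'\subseteq\bigl(\Omega^1_{Z'}(\log\Delta')\bigr)^{[n]}$ is again a Viehweg-Zuo sheaf with $\kappa(\sA')\geq\kappa(\sA)\geq 1$. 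If $\dim S=0$ then $\rho(Z')=1$ and $-(K_{Z'}+\Delta')$ is nef (it is relatively ample over a point), so Theorem~\ref{thm:b2thm} applies to $(Z',\Delta')$ and yields $\rho(Z')>1$, a contradiction.

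The remaining case, $\dim S=1$, is where the real work lies: $\phi:Z'\to S$ is a Mori fiber space onto a curve with general fiber $F\cong\bP^1$, and $\rho(Z')=2$. Here I would use the restriction sequence for logarithmic differentials along a general fiber: for $F$ general, $F$ meets $\Delta'$ transversally in the snc locus, and there is a short exact sequence $0\to\Omega^1_S(\log B)\resto F\to \Omega^1_{Z'}(\log\Delta')\resto F\to \Omega^1_F(\log(\Delta'\cap F))\to 0$, where $B$ is the image of the horizontal part of $\Delta'$ and $\Omega^1_S(\log B)\resto F\cong\sO_F$. Restricting the inclusion $\sA'^{[m]}\into\bigl(\Omega^1_{Z'}(\log\Delta')\bigr)^{[mn]}$ to $F$ and applying the Reduction Lemma~\ref{lem:reduction}, part~\iref{item:3}, to this sequence (the sub being trivial), one concludes $\sA'\resto F\into\bigl(\Omega^1_F(\log(\Delta'\cap F))\bigr)^{\otimes p}$ for some $0\leq p\leq n$. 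Since $F\cong\bP^1$ meets $\Delta'$ in at most one point (as $-(K_{Z'}+\Delta').F>0$ forces $\deg(K_F+\Delta'\cap F)<0$, i.e. at most one marked point), $\Omega^1_F(\log(\Delta'\cap F))$ has non-positive degree, so $\sA'\resto F$ has $\deg\leq 0$ for the general fiber $F$. This forces $\sA'$ to be $\phi$-vertical, i.e. $\sA'^{[m]}$ has no sections with support dominating $S$ unless they are pulled back from $S$; a short argument with $\phi_*$ then gives $\kappa(\sA')\leq\dim S=1$. To sharpen this to $\kappa(\sA')\leq 0$ I would observe that a section of $\sA'^{[m]}$ pulled back from $S$ would give an invertible subsheaf of $\bigl(\Omega^1_{Z'}(\log\Delta')\bigr)^{[mn]}$ restricting trivially to fibers, whose image under the analogue of the above sequence lands in $\phi^*\Omega^1_S(\log B)$; but $\deg(K_S+B)$ is controlled by adjunction and the Fano condition and is negative, contradicting $\kappa\geq 1$.

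The main obstacle, I expect, is the $\dim S=1$ case — specifically, making rigorous the passage from ``$\sA'\resto F$ has non-positive degree on the general fiber'' to the bound $\kappa(\sA')\leq 0$, which requires a careful bookkeeping of how sections of reflexive powers behave under the fibration $\phi$ and a control of $K_S+B$ via adjunction for the Mori fiber space over a curve. The divisorial-contraction bookkeeping (checking the pair stays dlt and log canonical, and that Lemma~\ref{lem:pushdownA} applies at each step) is routine on surfaces. If one prefers to avoid the fiber-by-fiber analysis entirely, an alternative is to invoke directly the surface case of Theorem~\ref{thm:b2thm} after one further contraction of the Mori fiber space when $\dim S=0$, and for $\dim S=1$ to note that $\Omega^{[1]}_{Z'}(\log\Delta')$ restricted to a general complete intersection (= fiber) curve is an extension of a trivial sheaf by a sheaf of negative degree, hence any invertible subsheaf of a tensor power has degree at most $0$, which is exactly the input needed for a Bogomolov-Sommese-type contradiction via Theorem~\ref{thm:Bvanishing}.
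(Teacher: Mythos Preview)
Your argument has a genuine gap in the case $\dim S=1$, and you correctly identify it yourself: from ``$\sA'\resto F$ has non-positive degree on the general fiber of one ruling'' you cannot directly conclude $\kappa(\sA')\leq 0$. Non-positivity on fibers of a single fibration only gives $\kappa(\sA')\leq 1$; your proposed sharpening via $K_S+B$ and adjunction is not carried out, and it is not clear how to make it work, since the inclusion $\sA'\subseteq\bigl(\Omega^1_{Z'}(\log\Delta')\bigr)^{[n]}$ does not in any obvious way force $\sA'$ to sit inside $\phi^*\bigl(\Omega^1_S(\log B)\bigr)^{\otimes n}$ globally, even if it does so fiberwise. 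The alternative you sketch at the end, invoking Theorem~\ref{thm:Bvanishing}, does not help either: Bogomolov--Sommese bounds $\kappa$ of a rank-one subsheaf of $\Omega^{[p]}$ by $p$, which gives nothing useful here.

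The paper closes this gap with one extra idea that you are missing. Rather than stopping at the first Mori fiber space, run the $(K_Z+\Delta)$-MMP (the $(1-\varepsilon)$ is unnecessary here) until \emph{no divisorial contractions remain}; ampleness of $-(K_Z+\Delta)$ is preserved under each divisorial contraction on a surface. On the resulting model $(Z_\lambda,\Delta_\lambda)$ one still has $-(K_{Z_\lambda}+\Delta_\lambda)$ ample, and Theorem~\ref{thm:b2thm} gives $\rho(Z_\lambda)>1$. Now the Cone Theorem says every extremal ray is $(K_{Z_\lambda}+\Delta_\lambda)$-negative, and since none is divisorial, each gives a contraction of fiber type onto a curve; this forces $\rho(Z_\lambda)=2$ and produces \emph{two} distinct rulings $\pi_1,\pi_2$. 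Your fiber computation then shows $\sA_\lambda$ has non-positive degree on the general fiber of \emph{each} ruling; since these two fiber classes span the Mori cone, $-\sA_\lambda$ is nef and hence $\kappa(\sA_\lambda)\leq 0$. The missing ingredient is precisely this second fibration.
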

\begin{proof}
  First recall from \cite[Prop.~4.11]{KM98} that $Z$ is $\Q$-Cartier.  The
  minimal model program then yields a morphism $\lambda: (Z,\Delta) \to
  (Z_\lambda ,\Delta_\lambda)$ to a $\Q$-Cartier model that does not admit any
  divisorial contractions. Note that the minimal model program for surfaces
  does not involve flips. Let $\sA_\lambda$ be the associated Viehweg-Zuo
  sheaf on $Z_\lambda$, as given by Lemma~\ref{lem:pushdownA}. It suffices to
  show that $\kappa(\sA_\lambda) \leq 0$.
  
  To this end, observe that $-(K_{Z_\lambda}+\Delta_\lambda)$ is still
  $\Q$-ample.  Theorem~\ref{thm:b2thm} and the Cone Theorem \cite[3.7]{KM98}
  then imply that there are at least two distinct contractions of fiber type,
  say $\pi_1 : Z_\lambda \to C_1$ and $\pi_2: Z_\lambda \to C_2$. If $F$ is a
  general fiber of $\pi_1$, then $F \cong \P^1$, the fiber $F$ is entirely
  contained inside the snc locus of $(Z_\lambda, \Delta_\lambda)$, and $F$
  intersects the boundary divisor $\Delta_\lambda$ transversely in no more
  than one point.  It follows from standard short exact sequences,
  \cite[Lem.~2.13]{KK08}, that
  $$
  \Omega^{[1]}_{Z_\lambda}(\log \Delta_\lambda)|_{F} \cong \sO_{\P^1} \oplus
  \sO_{\P^1}(a) \text{\quad with } a \leq 0.
  $$
  In particular, $\Omega^{[1]}_{Z_\lambda}(\log \Delta_\lambda)|_{F}$ is
  anti-nef, and $\sA_{\lambda}|_{F}$ is necessarily trivial. But the same
  holds for the restriction of $\sA_{\lambda}$ to general fibers of
  $\pi_2$. It follows that $\kappa(\sA_{\lambda}) \leq 0$, as claimed.
\end{proof}

\subsection{Viehweg-Zuo sheaves on good minimal models for varieties of logarithmic Kodaira dimension zero}

If $(Z,\Delta)$ is a good minimal model of Kodaira dimension zero, the
existence of a Viehweg-Zuo sheaf of positive Kodaira-Iitaka dimension implies
that $Z$ is uniruled.  This is shown next.

\begin{thm}\label{thm:uniruledness-of-minimal-models}
  Let $(Z,\Delta)$ be a logarithmic pair where $Z$ is projective. Assume that
  the following holds:
  \begin{enumerate}
  \item there exists a Viehweg-Zuo sheaf $\sA \subseteq \bigl( \Omega^1_Z(\log
    \Delta) \bigr)^{[n]}$ of positive Kodaira-Iitaka dimension,
  \item\ilabel{il:torsion} the log canonical divisor $K_Z + \Delta$ is
    $\mathbb Q$-Cartier and numerically trivial.
  \end{enumerate}
  Then $Z$ is uniruled.
\end{thm}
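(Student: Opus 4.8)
The plan is to argue by contradiction: assume $Z$ is not uniruled and derive a violation of the Bogomolov--Sommese vanishing theorem, much in the spirit of the proof of Theorem~\ref{thm:b2thm}. If $Z$ is not uniruled, then by the Miyaoka--Mori theorem (or, in the form I would actually cite, the characterization of uniruledness via non-pseudo-effectivity of the canonical class together with the fact that $K_Z+\Delta$ is numerically trivial) the canonical divisor $K_Z$ is pseudo-effective; equivalently, $-\Delta \equiv K_Z$ is pseudo-effective, i.e.\ $\Delta$ is anti-pseudo-effective. Since $\Delta$ is an effective divisor, this forces $\Delta \equiv 0$, and combined with condition~\iref{il:torsion} we get that $K_Z$ itself is $\bQ$-Cartier and numerically trivial. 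So after this first reduction we are in the situation of a projective variety $Z$, with only the singularities implied by the hypotheses, carrying a Viehweg-Zuo sheaf $\sA \subseteq \bigl(\Omega^1_Z\bigr)^{[n]}$ of positive Kodaira-Iitaka dimension, and with $K_Z \equiv 0$. (I should be a little careful here: the statement as given does not explicitly assume $(Z,\Delta)$ is log canonical or even that $Z$ is normal, but "Viehweg-Zuo sheaf" presupposes $(Z,\Delta)$ logarithmic hence $Z$ normal; I would either add a standing log-canonicity hypothesis or observe it is harmless for the intended application, where $Z$ comes from the MMP.)

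Next I would pass to a resolution and use the extension machinery to keep the Viehweg-Zuo sheaf. Let $\pi : \wtilde Z \to Z$ be a log resolution; since $Z$ is now (generically) klt-like with $K_Z \equiv 0$, pulling back and saturating, one obtains on $\wtilde Z$ an invertible subsheaf $\sC \subseteq \bigl(\Omega^1_{\wtilde Z}(\log E)\bigr)^{[n]}$ with $\kappa(\sC) \geq \kappa(\sA) > 0$, where $E$ is the reduced exceptional divisor (this is the higher-dimensional analogue of Theorem~\ref{thm:VZsheafextension2}; in dimension $\leq 3$ it is available, and for the stated generality one invokes the relevant extension theorem of \cite{GKK08}). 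Now apply the Bogomolov--Sommese vanishing theorem to $\wtilde Z$: any invertible subsheaf of $\bigl(\Omega^1_{\wtilde Z}(\log E)\bigr)^{\otimes n}$ of positive Kodaira-Iitaka dimension forces, via the reduction Lemma~\ref{lem:reduction} applied to the symmetric/tensor decomposition, an invertible subsheaf $\sL \subseteq \Omega^p_{\wtilde Z}(\log E)$ with $\kappa(\sL) \geq 1$ for some $1 \leq p \leq \dim Z$. So far there is no contradiction — this is exactly what Bogomolov--Sommese allows. The contradiction has to come from the \emph{constraint on $p$ together with $K_Z \equiv 0$}, and this is where the argument diverges from Theorem~\ref{thm:b2thm}.

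The heart of the matter, and what I expect to be the main obstacle, is ruling out the existence of such an $\sL$ with $\kappa(\sL) > 0$ when $K_{\wtilde Z} + E$ is (numerically) effective but $K_Z \equiv 0$. Here I would argue that a positive-dimensional subsheaf of $p$-forms on a variety whose canonical bundle is trivial produces, by exterior powers and wedging, a nonzero section of $\Omega^q_{\wtilde Z}(\log E)$ for various $q$, ultimately (if $p$ could be pushed to $\dim Z$) a pluri-canonical section forcing $\kappa(\wtilde Z, E) > 0$ — contradicting $\kappa(Z,\Delta)=0$, which holds because $K_Z+\Delta \equiv 0$ and the discrepancies are the right sign. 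The clean way to package this is: the existence of a Viehweg-Zuo sheaf of positive Kodaira-Iitaka dimension on a variety with $K+\Delta$ numerically trivial is incompatible with $\kappa(K+\Delta)=0$ \emph{unless} some sub-sheaf of forms "escapes" — and escape is precisely non-trivial families of rational curves, i.e.\ uniruledness. Making this last implication rigorous — translating "a $\bQ$-ample or big sub-line-bundle of $\Omega^p$ in the $K\equiv 0$ setting" into the geometric conclusion — is the step I would need to handle with care, likely by invoking the structure of the (log) Iitaka fibration of $\sL$ and a specialization/restriction argument to a general fiber, analogous to the two-fibration trick at the end of the proof of Corollary~\ref{cor:VZonFanoSurface}, or alternatively by a direct appeal to Miyaoka's semipositivity/uniruledness criterion applied to the destabilizing subsheaf on a general complete-intersection curve.
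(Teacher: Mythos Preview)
Your main line of attack---pull back to a log resolution, invoke an extension theorem to keep the Viehweg--Zuo sheaf, then try to squeeze a contradiction out of Bogomolov--Sommese---does not close, and you essentially say so yourself. Bogomolov--Sommese only bounds $\kappa(\sL) \leq p$ for a rank-one subsheaf $\sL \subseteq \Omega^p_{\wtilde Z}(\log E)$; having $\kappa(\sL) \geq 1$ for some $1 \leq p \leq \dim Z$ is perfectly consistent with that bound, and the wedging/escape heuristic you sketch afterwards is not an argument. There is no Picard-number-one hypothesis here to promote ``positive Kodaira--Iitaka dimension'' to ``$\bQ$-ample'' as in Theorem~\ref{thm:b2thm}, so the Bogomolov--Sommese mechanism from that proof simply does not apply. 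This is a genuine gap.

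The correct route is the one you mention only as a throwaway alternative in your last sentence, and it is exactly what the paper does. After the reduction to $\Delta = 0$ and $K_Z \equiv 0$, stay on $Z$ (no resolution, no extension theorem): take a general complete-intersection curve $C \subset Z$ lying in the smooth locus. Then $\Omega^1_Z\resto{C}$ is locally free of degree $K_Z \cdot C = 0$, while the restriction $\sA\resto{C} \subseteq \bigl(\Omega^1_Z\resto{C}\bigr)^{\otimes n}$ has positive degree because $\kappa(\sA) > 0$. Hence $\bigl(\Omega^1_Z\resto{C}\bigr)^{\otimes n}$ is not semistable; since tensor products of semistable bundles on a curve are semistable, $\Omega^1_Z\resto{C}$ itself fails to be semistable. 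Dually, the maximal destabilizing subsheaf of $\sT_Z\resto{C}$ has positive slope, hence is ample, and Miyaoka's uniruledness criterion (in the form of \cite[Cor.~5]{KST07}) gives the contradiction. One further remark on your first step: you argue pseudo-effectivity of $K_Z$ directly on the possibly singular $Z$, but neither $K_Z$ nor $\Delta$ is assumed $\bQ$-Cartier individually. The paper handles this by applying BDPP on a resolution $\wtilde Z$ to get $K_{\wtilde Z}$ pseudo-effective, then running the intersection computation $(K_Z+\Delta)\cdot C = (K_{\wtilde Z}+\wtilde\Delta)\cdot \wtilde C$ on a moving curve to force $\Delta = 0$.
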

\begin{proof}
  We argue by contradiction and assume that $Z$ is \emph{not} uniruled. If
  $\pi: (\wtilde Z, \wtilde \Delta) \to (Z, \Delta)$ is any log resolution,
  this is equivalent to assuming that $K_{\wtilde Z}$ is pseudo-effective,
  \cite[cor.~0.3]{BDPP03}, see also \cite[sect.~11.4.C]{L04}. Again by
  \cite[thm.~0.2]{BDPP03}, this is in turn equivalent to the assumption that
  $K_{\wtilde Z}\cdot \wtilde C \geq 0$ for all moving curves $\wtilde C
  \subset \wtilde Z$.
  
  As a first step, we will show that the assumption implies that the (Weil)
  divisor $\Delta$ is zero. To this end, choose a polarization of $Z$ and
  consider a general complete intersection curve $C \subset Z$.  Because $C$
  is a complete intersection curve, it intersects the support of the effective
  divisor $\Delta$ non-trivially if the support is not empty.  By general
  choice, the curve $C$ is contained in the snc locus of $(Z, \Delta)$ and
  avoids the indeterminacy locus of $\pi^{-1}$.  Its preimage $\wtilde C :=
  \pi^{-1}(C)$ is then a moving curve in $\wtilde Z$ which intersects $\wtilde
  \Delta$ positively if and only if the Weil divisor $\Delta$ is not zero.
  But
  $$
  0 = \underbrace{(K_Z\vphantom{_{\wtilde Y}} + \Delta)}_{\text{num. triv.}}
  \cdot C = (K_{\wtilde Z} + \wtilde \Delta)\cdot \wtilde C =
  \underbrace{K_{\wtilde Z} \cdot \wtilde C}_{\geq 0, \text{ as $\wtilde C$ is
      moving}} + \underbrace{\wtilde \Delta\vphantom{_{\wtilde Y}} \cdot
    \wtilde C}_{\geq 0, \text{ as $\wtilde C\not \subseteq \wtilde \Delta$}},
  $$
  so $\wtilde \Delta \cdot \wtilde C = 0$, and then $\Delta = \emptyset$ as
  claimed.  Combined with Assumption~\iref{il:torsion}, this implies that the
  canonical divisor $K_Z$ is itself numerically trivial. The restrictions
  $\Omega^1_Z\resto{C}$ and $\sT_Z\resto{C}$ are locally free sheaves of
  degree zero, and so is the product $\bigl( \Omega^1_Z\resto{C}
  \bigr)^{\otimes n}$. On the other hand, the restriction $\sA\resto{C}
  \subseteq \bigl( \Omega^1_Z\resto{C} \bigr)^{\otimes n}$ has positive
  degree. In particular, $\bigl( \Omega^1_Z\resto{C} \bigr)^{[n]}$ is not
  semi-stable. Since products of semi-stable vector bundles are again
  semi-stable, \cite[Cor.~3.2.10]{HL97}, this implies that
  $\Omega^1_Z\resto{C}$ and $\sT_Z\resto{C}$ are likewise not semi-stable. In
  particular, the maximal destabilizing subsheaf of $\sT_{Z}\resto{C}$ is
  semi-stable and of positive degree, hence ample. In this setup, a variant
  \cite[Cor.~5]{KST07} of Miyaoka's uniruledness criterion
  \cite[Cor.~8.6]{Miy85} applies to give the uniruledness of $Z$, contrary to
  our assumption. For more details on this criterion see the survey
  \cite{KS06}.
\end{proof}

As a corollary, we obtain a criterion to guarantee that the boundary is not
empty. This will allow to apply the ideas described in Section~\ref{sec:dlc}
above.

\begin{cor}\label{cor:boundarynonempty}
  In the setup of Theorem~\ref{thm:uniruledness-of-minimal-models}, if $(Z,
  \Delta)$ is dlc, then the boundary divisor $\Delta$ is not empty.
\end{cor}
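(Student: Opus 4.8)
The plan is to argue by contradiction: suppose $(Z,\Delta)$ is dlc but $\Delta = \emptyset$. Then $(Z,\emptyset)$ is a log canonical pair with $K_Z$ $\mathbb{Q}$-Cartier and numerically trivial by Assumption~\iref{il:torsion}, and by Theorem~\ref{thm:uniruledness-of-minimal-models} the variety $Z$ is uniruled. The strategy is to derive a contradiction from uniruledness together with the fact that $K_Z \equiv 0$. Indeed, a uniruled variety cannot have pseudo-effective canonical divisor by \cite[Cor.~0.3]{BDPP03}, and a numerically trivial divisor is certainly pseudo-effective, so this is a direct contradiction — provided we can pass from ``$K_Z \equiv 0$ on the possibly singular $Z$'' to ``$K_{\wtilde Z}$ pseudo-effective'' on a resolution, or more simply, observe that log canonical (hence, having a resolution with effective-plus-exceptional discrepancy only when canonical) forces the resolution's canonical class to dominate the pullback.

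More concretely, the key steps in order would be: first, assume $\Delta = \emptyset$ for contradiction. Second, invoke Theorem~\ref{thm:uniruledness-of-minimal-models} to conclude $Z$ is uniruled. Third, fix a log resolution $\pi : \wtilde Z \to Z$; since $(Z, \emptyset)$ is log canonical, write $K_{\wtilde Z} = \pi^* K_Z + \sum a_i E_i$ with all $a_i \geq -1$, but in fact the argument is cleanest by just noting that uniruledness of $Z$ implies uniruledness of $\wtilde Z$, so by \cite[Cor.~0.3]{BDPP03} the class $K_{\wtilde Z}$ is \emph{not} pseudo-effective. Fourth, derive a contradiction: pick a general complete intersection curve $C \subset Z$ avoiding the singular locus and the indeterminacy of $\pi^{-1}$ — exactly as in the proof of Theorem~\ref{thm:uniruledness-of-minimal-models} — so that $\wtilde C := \pi^{-1}(C)$ is a moving curve in $\wtilde Z$; then $K_{\wtilde Z} \cdot \wtilde C = (\pi^* K_Z) \cdot \wtilde C = K_Z \cdot C = 0$ since $K_Z \equiv 0$, contradicting \cite[Thm.~0.2]{BDPP03} which says $K_{\wtilde Z}$ pseudo-effective iff it is nonnegative on all moving curves — but $\wtilde Z$ uniruled forces $K_{\wtilde Z}$ non-pseudo-effective, hence there must be \emph{some} moving curve on which it is negative, which is consistent, so one has to be slightly careful: the real point is that the proof of Theorem~\ref{thm:uniruledness-of-minimal-models} has \emph{already} established, under the hypothesis $\Delta = \emptyset$, that $Z$ is uniruled while assuming it was not; reading that proof, the contradiction is internal.

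The honest and shortest route is therefore: the hypotheses of Theorem~\ref{thm:uniruledness-of-minimal-models} are satisfied (the Viehweg-Zuo sheaf of positive Kodaira-Iitaka dimension is given, and $K_Z + \Delta$ is $\mathbb{Q}$-Cartier and numerically trivial by the dlc assumption together with the standing hypothesis), so $Z$ is uniruled; but the proof of that very theorem shows that, \emph{when $\Delta = \emptyset$}, the assumption ``$Z$ not uniruled'' leads to a contradiction precisely \emph{via} the numerical triviality of $K_Z$ and the destabilizing subsheaf of $\sT_Z|_C$ — so the case $\Delta = \emptyset$ is exactly the case the theorem handles, and its conclusion ``$Z$ uniruled'' is non-vacuous there. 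Since a uniruled projective variety does not admit a nef (indeed pseudo-effective) canonical divisor, yet $K_Z \equiv 0$ is nef, we obtain the desired contradiction, forcing $\Delta \neq \emptyset$.

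The main obstacle I anticipate is bookkeeping the singular Riemann–Roch / pseudo-effectivity transfer between $Z$ and its resolution cleanly: one wants to say ``$Z$ uniruled and $K_Z \equiv 0$'' is impossible without re-deriving \cite{BDPP03} on singular spaces. The cleanest fix is to not leave $Z$ at all: run the argument on a log resolution $\wtilde Z$, where $Z$ uniruled $\Rightarrow$ $\wtilde Z$ uniruled $\Rightarrow$ $K_{\wtilde Z}$ is not pseudo-effective, while a general moving curve $\wtilde C = \pi^{-1}(C)$ computes $K_{\wtilde Z} \cdot \wtilde C = K_Z \cdot C + (\text{exceptional}) \cdot \wtilde C$; since $C$ is general it meets no exceptional divisor, so this is $K_Z \cdot C = 0$, and because $\wtilde C$ sweeps out $\wtilde Z$ this single computation contradicts \cite[Thm.~0.2]{BDPP03} once we know $K_{\wtilde Z}$ is \emph{not} pseudo-effective only if it is negative on \emph{some} moving curve — so in fact one needs the full strength that \emph{all} moving curves would have to pair nonnegatively, which they do here, giving $K_{\wtilde Z}$ pseudo-effective, contradicting uniruledness. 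Thus $\Delta \neq \emptyset$.
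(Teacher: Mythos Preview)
Your argument has a genuine gap at the final step. You correctly note that \cite[Thm.~0.2]{BDPP03} characterizes pseudo-effectivity of $K_{\wtilde Z}$ by nonnegativity on \emph{every} movable curve class, but you have only verified $K_{\wtilde Z}\cdot\wtilde C=0$ for the single family $\wtilde C=\pi^{-1}(C)$ coming from general complete intersection curves on $Z$; these span one ray of the movable cone of $\wtilde Z$, not the whole cone, so the clause ``which they do here'' is unsupported. In fact the implication you are relying on---that a klt variety $Z$ with $K_Z\equiv 0$ cannot be uniruled---is false. For instance, let $A=E\times E$ with $E$ an elliptic curve admitting multiplication by a primitive cube root of unity, and let $\bZ/3$ act diagonally; then $Z:=A/(\bZ/3)$ has nine cyclic quotient points of type $\frac{1}{3}(1,1)$, is klt with $K_Z$ torsion of order three, yet its minimal resolution $\wtilde Z$ satisfies $3K_{\wtilde Z}\sim -\sum_i E_i$, so $\kappa(\wtilde Z)=-\infty$ and $Z$ is uniruled. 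This $Z$ carries no Viehweg--Zuo sheaf of positive Kodaira--Iitaka dimension and hence does not violate the corollary---but your argument, once Theorem~\ref{thm:uniruledness-of-minimal-models} has been invoked, makes no further use of that sheaf and would apply verbatim to this $Z$.

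The paper closes the gap by first passing to the index-one cover $\eta:(Z',\emptyset)\to(Z,\emptyset)$ of Proposition~\ref{prop:index-cover}: since $\eta$ is \'etale in codimension one the Viehweg--Zuo sheaf pulls back, and Theorem~\ref{thm:uniruledness-of-minimal-models} applied to $(Z',\emptyset)$ shows that $Z'$ is uniruled. The crucial gain is that $K_{Z'}$ is now \emph{Cartier} and linearly trivial, so the dlt pair $(Z',\emptyset)$ is canonical, not merely klt; consequently any resolution $\wtilde Z'\to Z'$ has $\kappa(K_{\wtilde Z'})=0$ by Corollary~\ref{cor:bdrynotempty}, and this genuinely contradicts uniruledness. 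The upgrade from klt to canonical via the index-one cover is exactly the missing ingredient in your approach.
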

\begin{proof}
  We argue by contradiction and assume $\Delta = \emptyset$. By the definition
  of dlc, the pair $(Z, \emptyset)$ is then dlt.  Let $\eta: (Z', \emptyset)
  \to (Z, \emptyset)$ be the index-one-cover discussed in
  Proposition~\ref{prop:index-cover}.  Since $\eta$ is finite and étale in
  codimension one, there obviously exists an injection
  $$
  \eta^{[*]}(\sA) \subseteq \bigl( \Omega^1_{Z'} \bigr)^{[n]}.
  $$
  An application of Theorem~\ref{thm:uniruledness-of-minimal-models}, using
  the sheaf $\eta^{[*]}(\sA)$ as a Viehweg-Zuo sheaf on $(Z', \emptyset)$ then
  shows that $Z'$ is uniruled. If $\wtilde Z \to Z'$ is a resolution, then
  $Z'$ is likewise uniruled.  But Corollary~\ref{cor:bdrynotempty} would then
  assert that $\kappa(K_{\wtilde Z})=0$, in contradiction to uniruledness.
\end{proof}

\section{Unwinding families}
\label{sec:gluearama}

We will consider projective families $g: Y \to T$ where the base $T$ itself
admits a fibration $\varrho: T \to B$ such that $g$ is isotrivial on all
$\varrho$-fibers. It is of course generally false that $g$ is then the
pull-back of a family defined over $B$. We will, however, show in this section
that in some situations the family $g$ does become a pull-back after a
suitable base change. Most results in this section are probably known to
experts. We included full statements and proofs for the reader's convenience,
for lack of a suitable reference.

We use the following notation for fibered products that appear in our setup.

\begin{notation}
  Let $T$ be a scheme, $Y$ and $Z$ schemes over $T$ and $h:Y\to Z$ a
  $T$-morphism. If $t\in T$ is any point, let $Y_t$ and $Z_t$ denote the
  fibers of $Y$ and $Z$ over $t$. Furthermore, let $h_t$ denote the
  restriction of $h$ to $Y_t$. More generally, for any $T$-scheme $\wtilde T$,
  let
  $$
  h_{\wtilde T}: \underbrace{Y\times_T {\wtilde T}}_{=: Y_{\wtilde T}} \to
  \underbrace{Z\times_T {\wtilde T}}_{=: Z_{\wtilde T}}
  $$
  denote the pull-back of $h$ to $\wtilde T$. The situation is summarized in
  the following commutative diagram.
  $$
  \xymatrix{ Y_{\wtilde T} \ar[dr] \ar@/^1.25pc/[rrr] \ar[rr]_{h_{\wtilde T}}
    & & Z_{\wtilde T} \ar[dl] \ar@/^1.25pc/[rrr]|(.17)\hole & Y
    \ar[dr] \ar[rr]_{h}  && Z \ar[dl] \\
    & \wtilde T \ar[rrr] & & & T }
  $$
\end{notation}

The setup of the current section is then formulated as follows.

\begin{assumption}\label{ass:triplemor} 
  Throughout the present section, consider a sequence of morphisms between
  algebraic varieties,
  $$
  \xymatrix{ Y \ar[rr]^{g}_{\text{\tiny smooth, projective}} && {\ T\ }
    \ar[rr]^{\varrho}_{\overset{\text{\tiny smooth, rel.~dim.=1}}{\text{\tiny
          quasi-projective}}} && B, }
  $$
  where $g$ is a smooth projective family and $\varrho$ is smooth
  quasi-projective of relative dimension 1.  Assume further that for all $b
  \in B$, there exists a smooth variety $F_b$ such that for all $t \in T_b$,
  there exists an isomorphism $Y_t \simeq F_b$.
\end{assumption}

\subsection{Relative isomorphisms of families over the same base}

To start, recall the well-known fact that an isotrivial family of varieties of
general type over a curve becomes trivial after passing to an étale cover of
the base. As we are not aware of an adequate reference, we include a proof
here.

\begin{lem}\label{lem:dominant}
  Let $b\in B$ and assume that $\Aut(F_b)$ is finite.  Then the natural
  morphism $\iota: I=\Isom_{T_b}({Y_b}, {T_b}\times F_b)\to {T_b}$ is finite
  and étale.  Furthermore, pull-back to $I$ yields an isomorphism of
  $I$-schemes $Y_I \simeq I\times F_b$.
\end{lem}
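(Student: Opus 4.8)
The plan is to exhibit $I := \Isom_{T_b}(Y_b, T_b \times F_b)$ as a torsor under the constant group scheme $\Aut(F_b) \times T_b$ and to use the finiteness of $\Aut(F_b)$ to deduce the étale claim. First I would observe that for each $t \in T_b$ the fiber $I_t = \Isom(Y_t, F_b)$ is nonempty by the hypothesis that $Y_t \simeq F_b$ for all $t \in T_b$, and that $I_t$ is a torsor under $\Aut(F_b)$ via post-composition (or pre-composition). Thus $\iota : I \to T_b$ is surjective with all fibers finite of constant cardinality $|\Aut(F_b)|$. To make this precise at the scheme level one invokes the representability of the relative $\Isom$-functor: since $F_b$ is a smooth projective variety of general type (the families in question are families of canonically polarized, hence general-type, varieties), or more robustly since $Y_b \to T_b$ is smooth projective with relative polarization, $\Isom_{T_b}(Y_b, T_b \times F_b)$ is represented by a scheme quasi-projective over $T_b$; this is the standard boundedness/Hilbert-scheme argument for isomorphisms between projective schemes. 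The post-composition action of the constant group scheme $G := \Aut(F_b) \times T_b$ on $I$ over $T_b$ is free and transitive on fibers, so $I$ is a $G$-torsor over $T_b$ in the fppf (indeed étale) topology.

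Next I would conclude formal smoothness and étaleness of $\iota$. A torsor under a finite étale group scheme is itself finite étale over the base: locally in the étale topology on $T_b$ the torsor $I$ becomes trivial, i.e. isomorphic to $G \times_{T_b} U = \Aut(F_b) \times U$ for $U \to T_b$ étale, which is visibly finite étale over $U$; since being finite étale is local in the étale topology on the target, $\iota : I \to T_b$ is finite étale. Alternatively, and perhaps more self-contained, one checks directly that $\iota$ is proper (it is, being a closed subscheme of a Hilbert scheme of the projective morphism $Y_b \times_{T_b} (T_b \times F_b) \to T_b$), quasi-finite (fibers are finite, as shown), hence finite; and that $\iota$ is flat with reduced fibers of constant length, using that the tangent space to $\Isom(Y_t, F_b)$ at a point $\phi$ is $H^0(Y_t, \phi^* T_{F_b} ) \cong H^0(F_b, T_{F_b})$ which vanishes because $\Aut(F_b)$ is finite (equivalently $F_b$ has no infinitesimal automorphisms, being of general type) — so each fiber is a disjoint union of reduced points and $\iota$ is unramified; a flat unramified morphism is étale.

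Finally, for the second assertion I would use the universal property of $I$ directly. By definition of the $\Isom$ scheme, pulling back the family $Y_b \to T_b$ along $\iota : I \to T_b$ carries a tautological isomorphism: there is a universal $I$-isomorphism $\Phi : Y_I := Y_b \times_{T_b} I \xrightarrow{\ \sim\ } (T_b \times F_b) \times_{T_b} I = I \times F_b$, obtained by evaluating the universal family of isomorphisms. Concretely, a point of $I$ lying over $t \in T_b$ is a pair $(t, \psi)$ with $\psi : Y_t \xrightarrow{\sim} F_b$, and $\Phi$ sends $(y, (t,\psi)) \mapsto (\psi(y), (t,\psi))$; this is an isomorphism of $I$-schemes because it is so fiberwise and everything is flat and proper over $I$, or simply because it admits the evident inverse. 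Hence $Y_I \simeq I \times F_b$ as claimed.

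The main obstacle is not any single deep input but rather assembling the scheme-theoretic foundations cleanly: representability of the relative $\Isom$-functor by a quasi-projective scheme (requiring a relative polarization on $g$ restricted to $T_b$, which is available since $g$ is a smooth projective — in fact canonically polarized — family), and the verification that $\iota$ is flat with constant fiber length so that quasi-finite-plus-proper upgrades to finite étale. The vanishing $H^0(F_b, T_{F_b}) = 0$, equivalent to the finiteness hypothesis on $\Aut(F_b)$ via the general-type assumption, is exactly what makes the fibers reduced and hence forces étaleness rather than merely finite-flatness.
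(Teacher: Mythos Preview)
Your argument has a genuine gap at the step where you claim properness of $\iota$. You assert that $I = \Isom_{T_b}(Y_b, T_b \times F_b)$ is ``a closed subscheme of a Hilbert scheme,'' but this is false in the generality of the lemma: the $\Isom$-locus is \emph{open}, not closed, in $\Hilb_{T_b}(Y_b \times F_b)$, since the condition that both projections from a closed subscheme of the product be isomorphisms is an open condition in families. (It does become closed under a Matsusaka--Mumford type hypothesis such as canonical polarization, but the lemma as stated assumes only that $\Aut(F_b)$ is finite.) Without properness you cannot upgrade ``quasi-finite'' to ``finite,'' and your étaleness argument stalls. The torsor argument has a related circularity: saying that $I$ is a $G$-torsor in the fppf or étale topology already presupposes that $I \to T_b$ is an fppf covering, which is precisely the flatness you are trying to establish; a free transitive action on geometric fibers alone does not suffice.

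The paper confronts exactly this issue by passing to the closure $H^I$ of $I$ inside the Hilbert scheme. Since $I$ is open in $H$, its closure $H^I$ is a union of components of $H$; it is projective over $T_b$ and one-dimensional, hence finite over $T_b$. One then compares fiber lengths: for general $t$ one has $H^I_t = I_t$, while $\length(I_t) = |\Aut(F_b)|$ is constant for \emph{all} $t$, and this forces $I = H^I$, hence $\iota$ is finite and unramified, so étale. Your tangent-space computation $H^0(F_b, T_{F_b}) = 0$ is correct and does give unramifiedness; combined with the transitive $\Aut(F_b)$-action one could alternatively show that $I$ is pure one-dimensional, hence étale over the smooth curve $T_b$, and then use constancy of fiber cardinality over a normal base to deduce finiteness --- but that is not the argument you wrote. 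Your treatment of the second assertion, via the tautological isomorphism carried by $I$, is correct and is equivalent to the paper's diagonal-section argument in $\Isom_I(Y_I, I\times F_b) \simeq I\times_{T_b} I$.
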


\begin{proof}
  Consider the ${T_b}$-scheme
  $$
  H := \Hilb_{T_b} \bigl( {Y_b} \times_{T_b} ({T_b} \times F_b) \bigr) \simeq
  \Hilb_{T_b} \bigl( {Y_b} \times F_b \bigr).
  $$
  By Assumption~\ref{ass:triplemor}, $H_t\simeq \Hilb(F_b\times F_b)$ for all
  $t\in {T_b}$.  Similarly, $I_t\simeq \Aut(F_b)$ hence $I$ is
  one-dimensional, $\length(I_t)$ is constant on ${T_b}$ and $I\to T_b$ is
  dominant.  Since $I$ is open in $H$, the closure of $I$ in $H$, denoted by
  $H^I$, consists of a union of components of $H$. Therefore $H^I$ is also
  one-dimensional and since $H^I\to T$ is dominant, it is quasi-finite.
 
  Recall that $H\to {T_b}$ is projective, so $H^I\to {T_b}$ is also
  projective, hence finite.  Since $H\to {T_b}$ is flat, $\length(H^I_t)$ is
  constant.  Furthermore, $I\subseteq H^I$ is open, so $H^I_t=I_t$ and hence
  $\length(H^I_t)=\length(I_t)$ for a general $t\in {T_b}$.  However, we
  observed above that $\length(I_t)$ is also constant, so we must have that
  $\length(H^I_t)=\length(I_t)$ for all $t\in {T_b}$, and since $I\subseteq
  H^I$, this means that $I=H^I$ and $\iota:I\to {T_b}$ is finite and
  unramified, hence étale.

  In order to prove the global triviality of $Y_I$, consider $\Isom_I(Y_I,
  I\times F_b)$. Recall that taking $\Hilb$ and $\Isom$ commutes with base
  change, and so we obtain an isomorphism
  $$
  \Isom_I(Y_I, I\times F_b) \simeq I\times_{T_b} \Isom_{T_b}({Y_b},
  {T_b}\times F_b) \simeq I \times_{T_b} I.
  $$
  This scheme admits a natural section over $I$, namely its diagonal, which
  induces an $I$-isomorphism between $Y_I$ and $I \times F_b$.
\end{proof}

The preceding Lemma~\ref{lem:dominant} can be used to compare two families
whose associated moduli maps agree. In our setup any two such families become
globally isomorphic after base change.

\begin{lem}\label{lem:relglue}
  In addition to Assumption~\ref{ass:triplemor}, suppose that there exists
  another projective morphism, $Z\to T$, with the following property: for any
  $b\in B$ and any $t\in T_b$, we have $Y_t \simeq Z_t \simeq F_b$. Then
  \begin{enumerate-p}
  \item \ilabel{lem:54-1} there exists a surjective morphism $\tau: {\wtilde
      T}\to T$ such that the pull-back families of $Y$ and $Z$ to $\wtilde T$
    are isomorphic as $\wtilde T$-schemes, i.e., we have a commutative diagram
    as follows:
    $$
    \hskip 8em
    \xymatrix{ Y_{\wtilde T} \ar[dr] \ar@/^1.25pc/[rrr]
      \ar@{<->}[rr]_{\wtilde T-\text{isom.}}  & & Z_{\wtilde T}
      \ar[dl] \ar@/^1.25pc/[rrr]|(.17)\hole & Y
      \ar[dr]  && Z \ar[dl] \\
      & \wtilde T \ar[rrr]^\tau & & & T \ar[d]^\varrho\\
      & & & & B.}
    $$
  \end{enumerate-p}
  Furthermore, if for all $b\in B$, the group $\Aut(F_b)$ is finite, then
  $\wtilde T$ can be chosen such that the following holds. Let $\wtilde T'
  \subseteq \wtilde T$ be any irreducible component. Then
  \begin{enumerate-p}\addtocounter{enumi}{1}
  \item\ilabel{lem:54-2} $\tau$ is quasi-finite,
  \item\ilabel{lem:54-3} the image set $\tau (\wtilde T')$ is a union of
    $\varrho$-fibers, and
  \item\ilabel{lem:54-4} if $\wtilde T'$ dominates $B$, then there exists an
    open subset $B^\circ \subseteq (\varrho\circ\tau)(\wtilde T')$ such that
    $\tau\resto {\wtilde T'}$ is finite and étale over $B^\circ$.  More
    precisely, if we set $T^\circ := \varrho^{-1}(B^\circ)$ and $\wtilde
    T^\circ := \tau^{-1}(T^\circ) \cap \wtilde T'$, then the restriction
    $\tau\resto {\wtilde T^\circ} : \wtilde T^\circ \to T^\circ$ is finite and
    étale.
  \end{enumerate-p}
\end{lem}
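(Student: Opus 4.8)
The plan is to construct $\wtilde T$ as (a resolution, or suitable component, of) the relative $\Isom$-scheme between $Y$ and $Z$ over $T$, and then to verify the four claims one at a time, the first essentially formal, the others by reducing to the curve case $\varrho : T \to B$ via Lemma~\ref{lem:dominant}. For~\iref{lem:54-1}, set $\mathcal{I} := \Isom_T(Y, Z)$, with its natural structure morphism $\sigma : \mathcal{I} \to T$. Since $Y \to T$ is smooth projective with fibers of general type (canonically polarized), $\mathcal{I} \to T$ is of finite type and separated, and by hypothesis $\sigma$ is surjective (the fiber over $t \in T_b$ is $\Isom(Y_t, Z_t) = \Isom(F_b, F_b) \neq \emptyset$). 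Choose $\wtilde T$ to be an irreducible variety together with a surjective morphism $\wtilde T \to \mathcal{I}$ that dominates every component of $\mathcal{I}$ lying over a dense subset of $T$ — for instance a disjoint union of resolutions of those components — so that $\tau : \wtilde T \to T$ is surjective. Because $\Isom$ commutes with base change, pulling back the tautological section $\mathrm{id} \in \Isom_T(Y,Z)(\mathcal{I})$ gives an $\mathcal{I}$-isomorphism $Y_{\mathcal{I}} \simeq Z_{\mathcal{I}}$, hence a $\wtilde T$-isomorphism $Y_{\wtilde T} \simeq Z_{\wtilde T}$; this is the asserted diagram.

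For the finiteness statements, assume now $\Aut(F_b)$ finite for all $b$. The key observation is that the fiber of $\mathcal{I} \to T$ over $t \in T_b$ is a torsor under $\Aut(F_b)$, hence finite of length $|\Aut(F_b)|$; so $\sigma$ is quasi-finite, and therefore so is $\tau$, giving~\iref{lem:54-2}. For~\iref{lem:54-3}: let $\wtilde T' \subseteq \wtilde T$ be a component and let $b \in B$ be a point met by $(\varrho \circ \tau)(\wtilde T')$; I claim $\tau(\wtilde T') \supseteq T_b$. Indeed, $\mathcal{I}_b := \sigma^{-1}(T_b) = \Isom_{T_b}(Y_b, Z_b)$, and composing with a fixed trivialization $Y_b \simeq Z_b$ over some point (or rather working relative to $T_b \times F_b$ as in Lemma~\ref{lem:dominant}) identifies $\mathcal{I}_b$ with $\Isom_{T_b}(Y_b, T_b \times F_b) \times_{T_b} \Isom_{T_b}(Z_b, T_b \times F_b)$; by Lemma~\ref{lem:dominant} each factor is finite étale over $T_b$, so $\mathcal{I}_b \to T_b$ is finite étale, in particular every component of $\mathcal{I}_b$ surjects onto $T_b$ (as $T_b$ is a smooth, hence connected, curve). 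Since a component of $\mathcal{I}$ meeting $\mathcal{I}_b$ contains a component of $\mathcal{I}_b$, its image contains $T_b$; pushing this through $\wtilde T' \to \mathcal{I}$ (which dominates a component of $\mathcal{I}$ over $T_b$) gives $\tau(\wtilde T') \supseteq T_b$, and as $\tau(\wtilde T')$ is constructible this shows it is a union of $\varrho$-fibers after possibly shrinking.

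For~\iref{lem:54-4}: suppose $\wtilde T'$ dominates $B$. Let $C := (\varrho \circ \tau)(\wtilde T') \subseteq B$; it is constructible and dense, so contains a dense open $B^\circ$, over which (by generic flatness and shrinking further) $\tau\resto{\wtilde T'}$ is flat with finite fibers, hence finite onto its image, and the image is $T^\circ = \varrho^{-1}(B^\circ)$ by~\iref{lem:54-3}. To upgrade "finite" to "finite étale" over a possibly smaller $B^\circ$, observe that the length of the fibers of $\tau\resto{\wtilde T^\circ}$ over points of $T^\circ$ is a constructible function, hence constant on a dense open; since the generic fiber of a dominant morphism of varieties in characteristic zero is reduced (even smooth), this constant length equals the generic geometric fiber cardinality, so all fibers over a suitable dense open $B^\circ$ are reduced of constant cardinality, i.e.\ $\tau\resto{\wtilde T^\circ}$ is finite étale there. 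The main obstacle is the bookkeeping in~\iref{lem:54-3}–\iref{lem:54-4}: one must be careful that $\wtilde T$ was chosen (once and for all, before examining individual components) to dominate the "right" components of $\mathcal{I}$, and that shrinking $B^\circ$ for one component does not spoil the conclusions for the others — this is handled by first proving~\iref{lem:54-3} for all components and only then shrinking, finitely many times, to obtain~\iref{lem:54-4} for each dominating component.
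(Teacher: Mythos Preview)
Your overall strategy---take $\wtilde T$ to be the relative $\Isom$-scheme and read off the four claims from its fiberwise description---is exactly the paper's approach. The paper simply sets $\wtilde T := \Isom_T(Y,Z)$ with no resolution step; your description of $\wtilde T$ as ``an irreducible variety'' which is ``for instance a disjoint union of resolutions'' is self-contradictory and unnecessary, and the parenthetical ``fibers of general type (canonically polarized)'' imports a hypothesis that is \emph{not} present in Assumption~\ref{ass:triplemor}. The tautological/diagonal section argument for~\iref{lem:54-1} is the same in both.

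There are two genuine slips in your execution. First, in~\iref{lem:54-3} the identification $\mathcal I_b \simeq \Isom_{T_b}(Y_b, T_b\times F_b)\times_{T_b}\Isom_{T_b}(Z_b, T_b\times F_b)$ is false: the map $(\alpha,\beta)\mapsto \beta^{-1}\alpha$ from that product to $\mathcal I_b$ has fibers isomorphic to $\Aut(F_b)$, so $\mathcal I_b$ is the quotient by the free diagonal action, not the product. The conclusion that $\mathcal I_b\to T_b$ is finite \'etale survives, but the clean way (and what the paper does) is to observe that the proof of Lemma~\ref{lem:dominant} works verbatim with $Z_b$ in place of $T_b\times F_b$. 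Second, in~\iref{lem:54-4} your argument produces a dense open of $T$ on which $\tau$ is finite \'etale, but you then silently replace this by an open of the form $\varrho^{-1}(B^\circ)$. This step needs the input you have already earned but did not use: since $\tau_b$ is finite \'etale for every $b$, the ramification locus and the non-finite locus of $\tau\resto{\wtilde T'}$ map to proper closed subsets of $B$, so one may indeed take $B^\circ$ in $B$. The paper secures this via a multisection $\hat B\subset T$ and the constancy of $\length\bigl(\Isom_{\hat B}(Y,Y)\bigr)_b$ over an open $B^\circ$; your generic-smoothness route is fine once you add the $\varrho$-saturation argument.
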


\begin{subrem}
  In Lemma~\ref{lem:relglue} we do not claim that $\wtilde T$ is irreducible
  or connected.
\end{subrem}

\begin{proof}[Proof of Lemma~\ref{lem:relglue}]
  Set ${\wtilde T} := \Isom_T(Y,Z)$ and let $\tau:\wtilde T\to T$ be the
  natural morphism.  Again, taking $\Isom$ commutes with base change, and we
  have an isomorphism ${\wtilde T}\times_T{\wtilde T} \simeq \Isom_{{\wtilde
      T}}(Y_{{\wtilde T}}, Z_{{\wtilde T}}).  $ Similarly, for all $b\in B$,
  and for all $t\in T_b$, there is a natural one-to-one correspondence between
  ${{\wtilde T}}_t$ and $\Aut(F_b)$.  In particular, we obtain that $\tau$ is
  surjective. As before, observe that ${{\wtilde T}}\times_T {{\wtilde T}}$
  admits a natural section, the diagonal. This shows~\iref{lem:54-1}.
  
  If for all $b\in B$, $\Aut(F_b)$ is finite, then the restriction of $\tau$
  to any $\varrho$-fiber, $\tau_b:{{\wtilde T}}_b\to T_b$ is finite étale by
  Lemma~\ref{lem:dominant}.  This shows \iref{lem:54-2} and \iref{lem:54-3}.
  Furthermore, it implies that if $\wtilde T' \subseteq \wtilde T$ is a
  component that dominates $B$, neither the ramification locus of $\tau\resto
  {\wtilde T'}$ nor the locus where $\tau\resto {\wtilde T'}$ is not finite
  dominates $B$.

  Let $\what B\subseteq T$ be a multisection of $\varrho:T\to B$, i.e., a
  closed subvariety that dominates $B$ and is of equal dimension. In
  particular, the morphism $\varrho\resto{\what B}:\what B\to B$ is
  quasi-finite.  The scheme $\Isom_{\what B}(Y,Y)$ is quasi-finite and
  quasi-projective over $\what B$, hence over $B$ as well. Then there exists
  an open subset $B^\circ\subseteq B$ where $\length(\Isom_{\what
    B}(Y,Y))_{b}$ is constant for $b\in B$.  It is easy to see that
  \iref{lem:54-4} holds for $B^\circ$.
\end{proof}

\subsection{Families where $\varrho$ has a section}

In addition to Assumption~\ref{ass:triplemor} assume that the morphism
$\varrho$ admits a section $\sigma: B \to T$.  Using $\sigma:B\to T$, define
$Y_B := Y\times_T B$ and let $Z := Y_B \times_B T$ be the pull-back of $Y_B$
to $T$.  With these definitions, Lemma~\ref{lem:relglue} applies to the
families $Y \to T$ and $Z \to T$.  As a corollary, we will show below that in
this situation $\wtilde T$ contains a component $\wtilde T'$ such that the
pull-back family $Y_{\wtilde T'}$ comes from $B$.  Better still, the
restriction $\tau\resto {\wtilde T'}$ is ``relatively étale'' in the sense
that $\tau\resto {\wtilde T'}$ is étale and that $\varrho \circ \tau\resto
{\wtilde T'}$ has connected fibers.

\begin{cor}\label{cor:pulling-back-after-rel-etale}
  Under the conditions of Lemma~\ref{lem:relglue} assume that $\varrho$ admits
  a section $\sigma: B \to T$, and that $Z = Y_B \times_B T$.  Then there
  exists an irreducible component $\wtilde T' \subseteq \wtilde T$ such that
  \begin{enumerate}
  \item \ilabel{cor:56-1} $\wtilde T'$ surjects onto $B$, and
  \item \ilabel{cor:56-2} the restricted morphism $\wtilde\varrho := \varrho
    \circ \tau\resto {\wtilde T'} : \wtilde T' \to B$ has connected fibers.
  \end{enumerate}
\end{cor}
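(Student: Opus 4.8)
The plan is to unwind the fibered product $\wtilde T = \Isom_T(Y,Z)$ explicitly using the section $\sigma$, and then identify the component we want. First I would use the section $\sigma: B \to T$ to produce a natural section $s: B \to \wtilde T$ of $\wtilde\varrho = \varrho \circ \tau$. Indeed, by construction $Z = Y_B \times_B T$, where $Y_B = Y \times_T B$ (pulled back along $\sigma$). Over a point $\sigma(b)$ the fiber $Z_{\sigma(b)}$ is canonically identified with $(Y_B)_b = Y_{\sigma(b)}$, so the identity isomorphism gives a canonical point of $\Isom_{\{\sigma(b)\}}(Y_{\sigma(b)}, Z_{\sigma(b)}) = \wtilde T_{\sigma(b)}$. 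As $b$ varies this is visibly algebraic, hence defines a morphism $s: B \to \wtilde T$ with $\tau \circ s = \sigma$, so $\wtilde\varrho \circ s = \mathrm{id}_B$.

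Next, let $\wtilde T'$ be the unique irreducible component of $\wtilde T$ containing the image $s(B)$ (uniqueness follows because $s(B)$ is irreducible). Since $s$ is a section of $\wtilde\varrho$, the component $\wtilde T'$ dominates $B$; and because $\tau$ is quasi-finite by Lemma~\ref{lem:relglue}~\iref{lem:54-2} (noting $\Aut(F_b)$ is finite—this is part of the ambient hypotheses we may invoke, as the corollary is stated under ``the conditions of Lemma~\ref{lem:relglue}''), $\wtilde T' \to B$ is quasi-finite, hence generically finite, and in fact surjective onto $B$ by Lemma~\ref{lem:relglue}~\iref{lem:54-3}, which says $\tau(\wtilde T')$ is a union of $\varrho$-fibers and therefore—containing $\sigma(B)$, a multisection—must be all of $T$. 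This gives~\iref{cor:56-1}.

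It remains to prove~\iref{cor:56-2}, that $\wtilde\varrho|_{\wtilde T'}$ has connected fibers, and this is the step I expect to be the main obstacle. The idea is that the fiber $\wtilde T_b$ over $b \in B$ is, over each $t \in T_b$, a torsor under $\Aut(F_b)$ via Lemma~\ref{lem:dominant} applied with $Z$ in place of $T_b \times F_b$; more precisely $\wtilde T_b = \Isom_{T_b}(Y_b, Z_b)$, and since $Z_b = (Y_B)_b \times_b T_b = F_b \times T_b$ is the trivial family, Lemma~\ref{lem:dominant} gives $\wtilde T_b \simeq I_b \times_{T_b} I_b$-type structure—in any case $\wtilde T_b \to T_b$ is finite étale with fibers $\Aut(F_b)$. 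So the fiber $(\wtilde T')_b$ is a union of connected components of this finite étale cover of the (connected, since $\varrho$ is smooth with one-dimensional fibers—though one should be slightly careful here and may need to pass to an open $B^\circ$ as in Lemma~\ref{lem:relglue}~\iref{lem:54-4}) fiber $T_b$. The point is that the component $(\wtilde T')_b$ contains the point $s(b)$ lying over $\sigma(b)$, and one must show it is connected. The monodromy argument: the finite étale cover $\wtilde T_b \to T_b$ is classified by a homomorphism $\pi_1(T_b) \to \mathrm{Sym}(\Aut(F_b))$, and the connected components of $\wtilde T_b$ correspond to orbits; the component $\wtilde T'$ of the total space $\wtilde T$, restricted to $b$, is the union of those component-orbits that are ``connected through nearby fibers'' to $s(B)$. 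I would argue that because $s$ is a global section whose image lies in $\wtilde T'$, and because $\wtilde T'$ is irreducible, the fiber $(\wtilde T')_b$ is precisely the connected component of $\wtilde T_b$ through $s(b)$—if it were a union of two or more components $C_1 \sqcup C_2$ with $s(b) \in C_1$, irreducibility of $\wtilde T'$ would force $C_2$ to meet the closure of the locus over the generic point, but a Stein-factorization / rigidity argument (the number of components of the fiber is upper semicontinuous and the section picks out one) shows this cannot happen after shrinking $B$. Concretely, I would apply Stein factorization to $\wtilde\varrho|_{\wtilde T'}: \wtilde T' \to B$, writing it as $\wtilde T' \to B'' \to B$ with $B'' \to B$ finite and $\wtilde T' \to B''$ having connected fibers; the section $s$ lifts to a section of $B'' \to B$, and since $B'' \to B$ is finite with a section and $B''$ is irreducible (being dominated by the irreducible $\wtilde T'$), it follows that $B'' \to B$ is an isomorphism, whence $\wtilde T' \to B$ already has connected fibers. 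This yields~\iref{cor:56-2} and completes the proof.
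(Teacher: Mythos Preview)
Your approach is essentially the same as the paper's: construct a section $\wtilde\sigma:B\to\wtilde T=\Isom_T(Y,Z)$ from the canonical isomorphism $Y_B\simeq Z_B$, take $\wtilde T'$ to be an irreducible component containing $\wtilde\sigma(B)$, and conclude. The paper's proof is three lines: it simply observes that $\wtilde\sigma$ is a section of $\wtilde\varrho$ and asserts that the existence of a section guarantees surjectivity and connected fibers, without further comment. Your Stein-factorization argument is exactly the standard justification of this assertion that the paper suppresses, so in that sense you have written out what the paper leaves implicit.

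Two minor points. First, your detour through Lemma~\ref{lem:relglue}\,\iref{lem:54-2}--\iref{lem:54-3} to prove surjectivity onto $B$ is unnecessary: you already have $\wtilde\varrho\circ s=\mathrm{id}_B$, which gives surjectivity immediately. Second, the claim that $s(B)$ lies in a \emph{unique} component is not quite right---an irreducible subvariety can sit in the intersection of several components---but this is harmless: simply choose one component containing $s(B)$, as the paper does.
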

\begin{proof}
  It is clear from the construction that $Y_B \simeq Z_B$. This isomorphism
  corresponds to a morphism $\wtilde\sigma: B\to \Isom_T(Y,Z)=\wtilde T$. Let
  $\wtilde T' \subseteq \wtilde T$ be an irreducible component that contains
  the image of $\wtilde \sigma$.  Observe that $\wt\sigma$ is a section of
  $\wtilde\varrho : \wtilde T' \to B$ and that the existence of a section
  guarantees that $\wtilde\varrho$ is surjective and its fibers are connected.
\end{proof}

One particular setup where a section is known to exist is when $T$ is a
birationally ruled surface over $B$. The following will become important
later.

\begin{cor}\label{cor:family-push-forward}
  In addition to Assumption~\ref{ass:triplemor}, suppose that $B$ is a smooth
  curve and that the general $\varrho$-fiber is isomorphic to $\P^1$, $\mathbb
  A^1$ or $(\mathbb A^1)^*= \mathbb A^1\setminus \{0\}$. Then there exist
  non-empty Zariski open sets $B^\circ \subseteq B$, $T^\circ :=
  \varrho^{-1}(B^\circ)$ and a commutative diagram
  $$
  \xymatrix{%
    {\wtilde T^\circ} \ar@<-1pt>[r]^{\tau}_{\text{étale}}
    \ar@/_1mm/[rd]_{\text{conn.~fibers}} &
    \text{\vphantom{{$\wtilde T^\circ$}}}
    \ \ {T^\circ} \ar[d]^{\varrho} \\
    & \text{\phantom{\ }}{B^\circ} }
  $$
  such that 
  \begin{enumerate-p}
  \item the fibers of $\varrho\circ\tau$ are again isomorphic to $\P^1$,
    $\mathbb A^1$ or $(\mathbb A^1)^*$, respectively, and
  \item the pull-back family $Y_{\wtilde T^\circ}$ comes from $B^\circ$, i.e.,
    there exists a projective family $Z \to B^\circ$ and a $\wtilde
    T^\circ$-isomorphism
    $$
    Y_{\wtilde T^\circ} \simeq Z_{\wtilde T^\circ}.
    $$
  \end{enumerate-p}
\end{cor}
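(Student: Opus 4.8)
The plan is to reduce Corollary~\ref{cor:family-push-forward} to
Corollary~\ref{cor:pulling-back-after-rel-etale} by producing a section of
$\varrho$ after shrinking $B$. First I would observe that a smooth
quasi-projective morphism $\varrho : T \to B$ of relative dimension $1$ over a
smooth curve $B$, whose general fiber is $\P^1$, $\mathbb A^1$ or $(\mathbb
A^1)^*$, admits a section over a suitable Zariski-open $B^\circ \subseteq B$.
Indeed, after shrinking $B$ one may assume every fiber is of the generic type.
In the $\P^1$-case one uses Tsen's theorem (a $\P^1$-bundle over a curve is
Zariski-locally trivial, hence has a section) or, more elementarily, takes the
closure of a point in the generic fiber and shrinks $B$ to the open locus over
which this multisection is étale of degree one — or simply passes to the
relative smooth compactification and picks one of the boundary sections, which
exists after shrinking. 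In the $\mathbb A^1$-case the fiberwise unique point at
infinity in a relative compactification gives a section after shrinking, and
removing it leaves a section of $\varrho$ itself only if the original family is
the complement; instead one takes any multisection and shrinks $B$ so that it
splits into disjoint sections, then picks one component. In the $(\mathbb
A^1)^*$-case the two boundary points of the relative smooth compactification
form a degree-two multisection, and after an étale base change $B^\circ \to B$
— equivalently, after shrinking and possibly replacing $B$ by a connected
étale double cover, which we are allowed to fold into the final $\wtilde
T^\circ$ — one obtains an honest section.

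Second, having arranged a section $\sigma : B^\circ \to T^\circ$ (after
shrinking, and absorbing any auxiliary étale cover of the base into the
notation), I would set $Z := (Y_{B^\circ} \times_{B^\circ} T^\circ)$ exactly
as in Corollary~\ref{cor:pulling-back-after-rel-etale}, check that
Assumption~\ref{ass:triplemor} still holds for the restricted morphisms $Y_{T^\circ} \to T^\circ \to B^\circ$, and check the finiteness of $\Aut(F_b)$:
since the fibers $F_b$ are canonically polarized varieties (that is the hypothesis
flowing in from the main theorems, and in any case it is used implicitly via
Lemma~\ref{lem:dominant} elsewhere), their automorphism groups are finite.
Then Corollary~\ref{cor:pulling-back-after-rel-etale} produces an irreducible
component $\wtilde T' \subseteq \wtilde T = \Isom_{T^\circ}(Y,Z)$ that surjects
onto $B^\circ$ with connected $\wtilde\varrho$-fibers, and by
Lemma~\ref{lem:relglue}\iref{lem:54-4} there is a further open
$B^{\circ\circ} \subseteq B^\circ$ over which $\tau\resto{\wtilde T'}$ is finite
and étale; shrinking once more and renaming, set $\wtilde T^\circ :=
\tau^{-1}(T^\circ)\cap \wtilde T'$. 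The isomorphism $Y_{\wtilde T^\circ}
\simeq Z_{\wtilde T^\circ}$ supplied by Lemma~\ref{lem:relglue}\iref{lem:54-1},
together with $Z = Y_{B^\circ}\times_{B^\circ} T^\circ$, exhibits
$Y_{\wtilde T^\circ}$ as the pull-back of the projective family
$Y_{B^\circ} \to B^\circ$, which gives assertion (ii).

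Third, for assertion (i) I would note that $\varrho \circ \tau : \wtilde T^\circ
\to B^\circ$ factors as the finite étale map $\tau\resto{\wtilde T^\circ}$
followed by $\varrho$, and has connected fibers by construction of $\wtilde T'$;
a finite étale cover of $\P^1$, $\mathbb A^1$ or $(\mathbb A^1)^*$ which is
connected is again (an open subset of, after the shrinking we have done) $\P^1$,
$\mathbb A^1$ or $(\mathbb A^1)^*$ respectively — $\P^1$ is simply connected so
the cover is trivial; $\mathbb A^1$ in characteristic zero is simply connected so
again trivial; and every connected étale cover of $(\mathbb A^1)^* = \mathbb
C^*$ is $\mathbb C^* \xrightarrow{z\mapsto z^d} \mathbb C^*$. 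After possibly
shrinking $B^\circ$ a final time so that $\varrho\circ\tau$ is a
\emph{fiber bundle} with the stated fiber (generic flatness plus the fact that
all fibers over a dense open set already have the right isomorphism type), the
claim follows. The main obstacle I anticipate is the very first step:
producing the section of $\varrho$ cleanly, because in the $\mathbb A^1$ and
$(\mathbb A^1)^*$ cases a section of a relative compactification need not
restrict to a section of $\varrho$, so one must instead argue via multisections
and étale base changes and be careful that the étale cover of the base one
introduces is harmless — i.e., that it can be absorbed into $\wtilde T^\circ$
without disturbing the "connected fibers'' conclusion. Everything after the
section exists is a mechanical application of Lemma~\ref{lem:relglue} and
Corollary~\ref{cor:pulling-back-after-rel-etale} together with the elementary
topology of the three model curves.
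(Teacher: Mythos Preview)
Your overall strategy matches the paper's: produce a section of $\varrho$ after shrinking $B$, then apply Corollary~\ref{cor:pulling-back-after-rel-etale} together with the \'etaleness statement of Lemma~\ref{lem:relglue}, and finish part~(i) with the elementary topology of the three model curves. The only substantive difference is in how you manufacture the section, and there your argument has a real gap.

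In the $(\mathbb A^1)^*$ case you propose to pass to an \'etale double cover $\tilde B \to B^\circ$ so that the boundary multisection splits, and then to ``absorb'' this cover into $\wtilde T^\circ$. That cannot be done without violating the statement: if $\tilde B \to B^\circ$ has degree $d>1$, then whatever $\wtilde T^\circ$ you build over $\tilde B$ and map down to $T^\circ$ will, over each point of $B^\circ$, have $d$ connected components (one for each preimage in $\tilde B$), contradicting the ``connected fibers'' requirement in the diagram. You anticipate exactly this obstacle in your last paragraph but do not resolve it; the same issue afflicts the multisection fallback you sketch in the $\mathbb A^1$ case, since a connected component of an \'etale multisection need not have degree one over $B$.

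The paper's device removes the difficulty uniformly for all three cases. After shrinking $B$ so that every $\varrho$-fiber is of the generic type, choose a relative smooth compactification $\overline T \to B$ with boundary divisor $D = \overline T \setminus T$; the fibers of $\overline T \to B$ are then $\P^1$. By Tsen's theorem $\overline T \to B$ admits a section, and in fact a positive-dimensional family of sections, so one may choose a section $\sigma$ with $\sigma(B) \not\subset D$. Restricting to the open set $B^\circ \subseteq B$ where $\sigma(b) \notin D$ gives an honest section of $\varrho : T^\circ \to B^\circ$, with no auxiliary cover of $B$ required. From there Corollary~\ref{cor:pulling-back-after-rel-etale} applies directly, and the remainder of your argument (including the treatment of part~(i)) goes through verbatim.
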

\begin{subrem}\label{srem:family-push-forward}
  If the general $\varrho$-fiber is isomorphic to $\P^1$ or $\mathbb A^1$, the
  morphism $\tau$ is necessarily an isomorphism. Shrinking $B^\circ$ further,
  if necessary, $\varrho: T^\circ \to B^\circ$ will then even be a trivial
  $\P^1$-- or $\mathbb A^1$--bundle, respectively.
\end{subrem}

\begin{proof}
  Shrinking $B$, if necessary, we may assume that all $\varrho$-fibers are
  isomorphic to $\P^1$, $\mathbb A^1$ or $(\mathbb A^1)^*$, and hence that $T$
  is smooth. Then it is always possible to find a relative smooth
  compactification of $T$, i.e. a smooth $B$-variety $\overline T \to B$ and a
  smooth divisor $D \subset T$ such that $\overline T \setminus D$ and $T$ are
  isomorphic $B$-schemes.
  
  By Tsen's theorem, \cite[p.~73]{Shaf94}, there exists a section $\sigma: B
  \to \overline T$.  In fact, there exists a positive dimensional family of
  sections, so that we may assume without loss of generality that $\sigma(B)$
  is not contained in $D$.
  
  Let $B^\circ \subseteq B$ be the open subset such that for all $b \in
  B^\circ$, $\overline T_b \simeq \P^1$, $T_b$ is isomorphic to $\P^1$,
  $\mathbb A^1$ or $(\mathbb A^1)^*$, respectively, and $\sigma(b) \not \in
  D$.  Using that any connected finite étale cover of $T_b$ is again
  isomorphic to $T_b$, and shrinking $B^\circ$ further,
  Corollary~\ref{cor:pulling-back-after-rel-etale} yields the claim.
\end{proof}

\begin{rem}
  Throughout the article we work over the field of complex numbers $\bC$, thus
  we kept that assumption here as well.  However, we would like to note that
  the results of this section work over an arbitrary algebraically closed base
  field $k$.
\end{rem}

\part{THE PROOFS OF THEOREMS~\ref*{thm:mainresult0}, \ref*{thm:mainresult2} AND \ref*{thm:mainresult3}}

\section{The case \texorpdfstring{$\kappa(Y^\circ)=-\infty$}{the Kodaira dimension is
    minus infinity}}
\label{sec:kinfty}

\subsection{Setup}

Let $f^\circ: X^\circ \to Y^\circ$ be a smooth projective family of varieties
with semi-ample canonical bundle, over a quasi-projective manifold $Y^\circ$
of dimension $\dim Y^\circ \leq 3$ and logarithmic Kodaira dimension
$\kappa(Y^\circ) = -\infty$.

Consider a smooth compactification $Y$ of $Y^\circ$ where $D := Y \setminus
Y^\circ$ is a divisor with simple normal crossings. Let $\lambda : Y
\dasharrow Y_\lambda$ be a sequence of extremal divisorial contractions and
flips given by the minimal model program, and let $D_\lambda \subset
Y_\lambda$ be the cycle-theoretic image of $D$.  We may assume that
$(Y_\lambda, D_\lambda)$ satisfies the following properties:

\begin{named_rem}{Properties of \texorpdfstring{$(Y_\lambda, D_\lambda)$}} \label{setup-Y_lambda}\ 
  \begin{enumerate-p}
  \item The variety $Y_\lambda$ is $\Q$-factorial, and $(Y_\lambda, D_\lambda)$ is a
    logarithmic dlt pair.
  \item The pair $(Y_\lambda, D_\lambda)$ does not admit a divisorial or small extremal contraction.
  \item As $\kappa(Y^\circ) = -\infty$, either \label{item:harom}
    \begin{itemize}
    \item $\rho(Y_\lambda)=1$ and $(Y_\lambda,D_\lambda)$ is $\Q$-Fano, or
    \item $\rho(Y_\lambda)>1$ and $(Y_\lambda,D_\lambda)$ admits a non-trivial extremal contraction of fiber type.
    \end{itemize}
  \end{enumerate-p}
\end{named_rem}

\subsection{Proof of Theorem~\ref*{thm:mainresult2}}
\label{sec:pf12kinfty}

To prove Theorem~\ref{thm:mainresult2}, assume that $f^\circ$ is a family of
canonically polarized varieties and that $f^\circ$ has positive variation,
$\Var(f^\circ) > 0$. By \cite[Thm.~1.4]{VZ02} and Lemma~\ref{lem:pushdownA},
this implies that there exists a Viehweg-Zuo sheaf $\sA_\lambda$ of positive
Kodaira-Iitaka dimension, $\kappa(\sA_\lambda) \geq \Var(f^\circ) > 0$ on
$(Y_\lambda,D_\lambda)$ .  Since $(Y_\lambda, D_\lambda)$ is $\Q$-factorial
and dlt, in particular log canonical, Theorem~\ref{thm:b2thm} implies that
$\rho(Y_\lambda)>1$.  Therefore, by (\ref{setup-Y_lambda}.\ref{item:harom}),
there exists an extremal contraction of fiber type $\pi: Y_\lambda \to C$. Let
$F \subset Y_\lambda$ be a general $\pi$-fiber, and $D_{\lambda,F} :=
D_\lambda \resto F$ the restriction of the boundary divisor.

We will now push the family $f^\circ$ down to $F$, to the maximum extent
possible. Since the inverse map $\lambda^{-1}$ does not contract any divisor,
we may use $\lambda^{-1}$ to pull the family $f^\circ: X^\circ \to Y^\circ$
back to obtain a smooth family of canonically polarized varieties,
$$
f_\lambda: X_\lambda \to Y_\lambda \setminus (D_\lambda \cup T),
\text{\quad where }\codim_{ Y_\lambda} T \geq 2.
$$
Let $f_{\lambda,F} := f_\lambda|_{F}$ be the restriction of this family to
$F$. To prove Theorem~\ref{thm:mainresult2} in our context, it suffices to
show that the family $f_{\lambda,F}$ is isotrivial. This will be carried out
next.

\subsubsection{Proof of Theorem~\ref*{thm:mainresult2} when $F$ is a curve}
\label{sec:K0curve}

If $F$ is a curve, it is entirely contained inside the snc locus of
$(Y_\lambda, D_\lambda)$ and does not intersect $T$. Furthermore, it follows
from the adjunction formula that $F \cong \P^1$ and that $D_{\lambda, F}$
contains no more than one point. In this situation, the isotriviality of
$f_{\lambda,F}$ is well-known, \cite[0.2]{Kovacs00a} and
\cite[Thm.~0.1]{Vie-Zuo01}. This shows that the variation $\Var(f^\circ)$
cannot be maximal and finishes the proof of Theorem~\ref{thm:mainresult2}.

\subsubsection{Proof of Theorem~\ref*{thm:mainresult2} when $F$ is a surface}

Again, we need to show that $f_{\lambda,F}$ is isotrivial. We argue by
contradiction and assume that this is \emph{not} not the case.  By general
choice of $F$, the pair $(F, D_{\lambda,F})$ is again dlt and
$$
\codim_F T_F = \codim_{ Y_\lambda} T \geq 2, \text{\quad where } T_F := T \cap
F.
$$

We claim that there exists a Viehweg-Zuo sheaf $\sB_\lambda$ on $(F,
D_{\lambda, F})$ which is of positive Kodaira-Iitaka dimension. In fact, an
embedded resolution of $D_{\lambda,F} \cup T_F\subseteq F$ provides an snc
pair $(\wtilde F, \wtilde D)$ and a proper morphism $\eta: \wtilde F \to F$
such that $\eta(\wtilde D) = D_{\lambda,F} \cup T_F$. The family
$f_{\lambda,F}$ pulls back to a family on $\wtilde F \setminus \wtilde D$, and
\cite[Thm.~1.4]{VZ02} asserts the existence of a Viehweg-Zuo sheaf $\sB$ on
$(\wtilde F, \wtilde D)$ with $\kappa(\sB) > 0$. The existence of a
Viehweg-Zuo sheaf $\sB_\lambda$ on $(F, D_{\lambda, F})$ with
$\kappa(\sB_\lambda) \geq \kappa(\sB) > 0$ then follows from
Lemma~\ref{lem:pushdownA}.

On the other hand, $-(K_F+ D_{\lambda,F})$ is $\Q$-ample because $\pi$ is an
extremal contraction of fiber type.  Corollary~\ref{cor:VZonFanoSurface} thus
asserts that $\kappa(\sB_\lambda) \leq 0$, a contradiction. This finishes the
proof of Theorem~\ref{thm:mainresult2} in case $\kappa(Y^\circ) =
-\infty$. \qed

\subsection{Proof of Theorem~\ref*{thm:mainresult3}}

We maintain the notation and assumptions made in Section~\ref{sec:pf12kinfty}
above and assume in addition that $Y$ is a surface. The minimal model map
$\lambda$ is then a morphism. As we have seen in Section~\ref{sec:K0curve} the
general fiber $F'$ of $\pi \circ \lambda$ is again a rational curve which
intersects the boundary in at most one point and that then the restriction of
the family $f^\circ$ to the fiber $F'\cap Y^\circ$ is necessarily isotrivial.
The detailed descriptions of $Y^\circ$ and of the moduli map in case
$\kappa(Y^\circ)=-\infty$ which are asserted in Theorem~\ref{thm:mainresult3}
then follow from Corollary~\ref{cor:family-push-forward} and
Remark~\ref{srem:family-push-forward}.  This finishes the proof of
Theorem~\ref{thm:mainresult3} in case $\kappa(Y^\circ) = -\infty$.  \qed

\subsection{Proof of Theorem~\ref*{thm:mainresult0}}

To prove Theorem~\ref{thm:mainresult0}, we argue by contradiction and assume
that both $\kappa(Y^\circ) = -\infty$ and that $\Var(f^\circ) = \dim Y^\circ$.
Lemma~\ref{lem:pushdownA} and \cite[Thm.~1.4]{VZ02} then give the existence of
a big Viehweg-Zuo sheaf $\sA_\lambda$ on $(Y_\lambda,D_\lambda)$. The
argumentation of Section~\ref{sec:pf12kinfty} applies verbatim and shows the
existence of a proper fibration of $\pi : Y_\lambda \to C$ such that the
induced family is isotrivial when restricted to the general $\pi$-fiber. That,
however, contradicts the assumption that the variation is
maximal. Theorem~\ref{thm:mainresult0} is thus shown in the case
$\kappa(Y^\circ) = -\infty$. \qed

\section{The case \texorpdfstring{$\kappa(Y^\circ)= 0$}{the Kodaira
    dimension is zero}}
\label{sec:k0}

\subsection{Setup}
\label{ssec:setup}

Let $f^\circ: X^\circ \to Y^\circ$ be a smooth projective family of varieties
with semi-ample canonical bundle, over a quasi-projective variety $Y^\circ$ of
dimension $\dim Y^\circ \leq 3$ and logarithmic Kodaira dimension
$\kappa(Y^\circ) = 0$. To prove Theorems~\ref{thm:mainresult0} and
\ref{thm:mainresult2} in this case, it suffices to show that $f^\circ$ is not
of maximal variation, and even isotrivial if its fibers are canonically
polarized. Since those families give rise to Viehweg-Zuo sheaves of positive
Kodaira-Iitaka dimension by \cite[Thm.~1.4]{VZ02},
Theorems~\ref{thm:mainresult0} and \ref{thm:mainresult2} immediately follow
from the following proposition.

\begin{prop}\label{prop:6.2}
  Let $(Z, \Delta)$ be a dlt logarithmic pair where $Z$ is a $\Q$-factorial
  variety of dimension $\dim Z \leq 3$ and $\kappa(K_Z+\Delta) = 0$. If $\sA$
  is any Viehweg-Zuo sheaf on $(Z,\Delta)$, then $\kappa(\sA) \leq 0$.
\end{prop}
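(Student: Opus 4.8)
The plan is to run the minimal model program on $(Z,\Delta)$ and reduce to the end product, exploiting the dichotomy provided by $\kappa(K_Z+\Delta)=0$: either the boundary can be "thinned" to push the program further, or we land on a good minimal model of Kodaira dimension zero. First I would replace $(Z,\Delta)$ by a $\Q$-factorial dlt model on which the minimal model program runs freely, pushing the Viehweg-Zuo sheaf forward at each step via Lemma~\ref{lem:pushdownA} (legitimate since divisorial contractions and flips do not contract divisors on the target side), so that $\kappa(\sA)$ can only grow. The program for $\kappa=0$ terminates at a dlt pair $(Z_0,\Delta_0)$ on which $K_{Z_0}+\Delta_0$ is nef, hence (by abundance in dimension $\le 3$, or because $\kappa=0$ forces it) semi-ample and in fact torsion. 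This puts us in the setting of Section~\ref{sec:indexcoverk0}.

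Next I would split into two cases according to whether $\Delta_0=\emptyset$. If $\Delta_0\ne\emptyset$, then as explained in Section~\ref{sec:dlc}, for small $\varepsilon\in\Q^+$ the pair $\bigl(Z_0,(1-\varepsilon)\Delta_0\bigr)$ is dlt of Kodaira dimension $-\infty$, so it admits a further extremal contraction; iterating, I eventually reach a log canonical — in fact dlc — pair which I can feed into Theorem~\ref{thm:b2thm} or Corollary~\ref{cor:VZonFanoSurface}. Concretely, the end product of this secondary program is a pair $(Z',\Delta')$ that is a Mori fiber space or Fano-type with $-(K_{Z'}+\Delta')$ nef; if $\rho=1$ this contradicts Theorem~\ref{thm:b2thm} unless $\kappa(\sA)\le0$, and if $\rho>1$ one contracts along a fiber-type extremal ray: the general fiber $F$ is covered by rational curves meeting $\Delta'$ in at most one point, on which $\Omega^{[1]}(\log)$ is anti-nef, forcing the restriction of the Viehweg-Zuo sheaf to be trivial on a covering family, hence $\kappa(\sA)\le0$ by the same argument as in Corollary~\ref{cor:VZonFanoSurface}. (When $\dim Z\le 2$ this is exactly Corollary~\ref{cor:VZonFanoSurface} applied after the thinning; in dimension $3$ one restricts to a general surface fiber or works directly with Theorem~\ref{thm:b2thm}.)

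If instead $\Delta_0=\emptyset$, then $K_{Z_0}$ is torsion and I would pass to the global index-one cover $\eta:(Z_0',\emptyset)\to(Z_0,\emptyset)$ of Proposition~\ref{prop:index-cover}, which is étale in codimension one, so $\eta^{[*]}\sA\subseteq\bigl(\Omega^1_{Z_0'}\bigr)^{[n]}$ is again a Viehweg-Zuo sheaf with $\kappa(\eta^{[*]}\sA)\ge\kappa(\sA)$, and now $K_{Z_0'}\cong\O_{Z_0'}$. Here Theorem~\ref{thm:uniruledness-of-minimal-models} applies: if $\kappa(\eta^{[*]}\sA)>0$ then $Z_0'$ would be uniruled, but Corollary~\ref{cor:bdrynotempty} shows a log resolution of $Z_0'$ has $\kappa(K)=0$, contradicting uniruledness. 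Hence $\kappa(\sA)\le0$ in this case too.

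The main obstacle I anticipate is the bookkeeping around singularities and $\Q$-factoriality when pushing the Viehweg-Zuo sheaf through the secondary (thinned-boundary) minimal model program: one must check that the successive images stay in the class of pairs to which Theorem~\ref{thm:b2thm} or its surface corollary applies — namely that log canonicity and $\Q$-factoriality are preserved and that the resulting fiber space is genuinely a Mori fiber space with $-(K+\Delta)$ nef — and that Lemma~\ref{lem:pushdownA} is applicable at each step (which forces one to verify no divisor is contracted by the inverse of the relevant birational map). The dimension-$3$ case, where one has a fiber-type contraction onto a curve with surface fibers, also requires carefully restricting the argument of Corollary~\ref{cor:VZonFanoSurface} to a general fiber, using that dlt and the codimension bounds of Lemma~\ref{lem:dltindexone} are preserved under general hyperplane/fiber restriction.
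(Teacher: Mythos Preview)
Your handling of the case $\Delta_0=\emptyset$ is fine and matches the paper's Corollary~\ref{cor:boundarynonempty}. The gap is entirely in the case $\Delta_0\ne\emptyset$, and it is a real one.

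After the thinned-boundary program you land on $(Z_\mu,\Delta_\mu)$ with $K_{Z_\mu}+\Delta_\mu$ \emph{torsion}, not anti-ample. So Corollary~\ref{cor:VZonFanoSurface} does not apply: that corollary needs $-(K+\Delta)$ $\Q$-ample to produce \emph{two} distinct fiber-type contractions, and it is precisely the second fibration that forces $\kappa(\sA)\le 0$. Here Theorem~\ref{thm:b2thm} gives you only one Mori fiber space $\pi:Z_\mu\to W$. On a general $\pi$-fiber $F$ adjunction makes $K_F+\Delta_F$ numerically trivial, so if $F$ is a curve it is $\bP^1$ meeting $\Delta_\mu$ in \emph{two} points (or an elliptic curve disjoint from $\Delta_\mu$), not ``at most one''; the conormal sequence then makes $\Omega^1_{Z_\mu}(\log\Delta_\mu)|_F$ a degree-zero bundle. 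All you can extract from that is $\deg\bigl(\sA_\mu|_F\bigr)\le 0$, hence $\kappa(\sA_\mu)\le\dim W$. That is $\le 1$ in the surface case and $\le 2$ in the threefold case---not $\le 0$. Restricting to a general surface fiber in dimension~3 runs into the same wall: you get a dlc surface pair of Kodaira dimension zero, not a log-Fano, so again Corollary~\ref{cor:VZonFanoSurface} is unavailable and you are back to the statement you are trying to prove, one dimension down.

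This last observation is actually the hint toward what the paper does: the proof is by \emph{induction on $\dim Z$}. The Mori fiber space is not used to bound $\kappa(\sA)$ directly; it is used (via the induction hypothesis on the fiber, Proposition~\ref{prop:c6} and Corollary~\ref{cor:trivialA}) to show $\sA_\mu|_F\cong\sO_F$, which in turn forces $\sA_\lambda$ to have positive Kodaira--Iitaka dimension on some boundary component $\Delta_{\lambda,1}$ (Corollary~\ref{cor:nontriv}). One then passes to the index-one cover, uses Lemma~\ref{lem:dltindexone} to get snc in codimension two along the boundary, and restricts $\sA'_\lambda$ to $(\Delta'_{\lambda,1},\Delta''_{\lambda,1})$ via the logarithmic conormal sequence. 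Adjunction makes this a dlt pair with trivial log canonical class carrying a Viehweg--Zuo sheaf of positive Kodaira--Iitaka dimension, contradicting the induction hypothesis. The step you are missing is exactly this transfer of positivity from $Z_\lambda$ to a boundary component; without it the single Mori fibration does not close the argument.
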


Observe that once Theorem~\ref{thm:mainresult2} holds, the assertion of
Theorem~\ref{thm:mainresult3} is vacuous in our case. Accordingly, we do not
consider Theorem~\ref{thm:mainresult3} here.

We show Proposition~\ref{prop:6.2} in the remainder of the present
Section. The proof proceeds by induction on $\dim Z$. If $\dim Z = 1$, the
statement of Proposition~\ref{prop:6.2} is obvious. We will therefore assume
throughout the proof that $\dim Z > 1$, and that the following holds.

\begin{ihyp}\label{ihyp:p81}
  Proposition~\ref{prop:6.2} is already shown for all pairs $(Z', \Delta')$ of
  dimension $\dim Z' < \dim Z$.
\end{ihyp}

We argue by contradiction and assume the following.

\begin{iass}\label{ass:conk0ind}
  There exists a Viehweg-Zuo sheaf $\sA$ of positive Kodaira-Iitaka dimension
  $\kappa(\sA)>0$.
\end{iass}

We run the minimal model program and obtain a birational map $\lambda: Z
\dasharrow Z_\lambda$, where $Z_\lambda$ is $\Q$-factorial. If
$\Delta_\lambda$ is the cycle-theoretic image, the pair $(Z_\lambda,
\Delta_\lambda)$ is dlt, and $K_{Z_\lambda}+\Delta_\lambda$ is
semi-ample. Since $\kappa(K_{Z_\lambda}+\Delta_\lambda) = 0$, the divisor
$K_{Z_\lambda} + \Delta_\lambda$ is $\Q$-torsion, i.e.,
\begin{equation}\label{eq:torsion}
  \exists m \in \mathbb N \text{ such that } \sO_{Z_\lambda}
  \bigl( m(K_{Z_\lambda} + \Delta_\lambda)\bigr)
  \cong \sO_{Z_\lambda}.
\end{equation}
Lemma~\ref{lem:pushdownA} guarantees the existence of a Viehweg-Zuo sheaf
$\sA_\lambda$ on $(Z_\lambda,\Delta_\lambda)$ with
$\kappa(\sA_\lambda)>0$. Raising $\sA$ and $\sA_\lambda$ to a suitable
reflexive power, if necessary, we assume without loss of generality that
$\sA_\lambda$ is invertible and that $h^0(Z_\lambda,\, \sA_\lambda) > 0$.

\subsection{Outline of the proof}
\label{ssec:outline}

Since the proof of Proposition~\ref{prop:6.2} is slightly more complicated
than most other proofs here, we outline the main strategy for the convenience
of the reader.

The main idea is to apply induction, using a component of the boundary divisor
$\Delta_\lambda$. For that, we show in Section~\ref{ssec:nontriviality} that
$\sA_\lambda$ is not trivial on the boundary, and that there exists a
component $\Delta'_\lambda \subseteq \Delta_\lambda$ such that
$\kappa(\sA_\lambda\resto{\Delta'_\lambda}) > 0$. Passing to the index-one
cover, we will then in Section~\ref{ssec:8f} construct a Viehweg-Zuo sheaf of
positive Kodaira-Iitaka dimension on the associated boundary component and
verify that this component with its natural boundary satisfies all the
requirements of Proposition~\ref{prop:6.2}. This clearly contradicts the
Induction Hypothesis~\ref{ass:conk0ind} and finishes the proof.

In order to find $\Delta'_\lambda$ we need to analyze the geometry of
$Z_\lambda$ in more detail. For that, we will show in
Section~\ref{ssec:furthercontr} that the minimal model $Z_\lambda$ admits
further contractions if one is willing to modify the coefficients of the
boundary, compare the remarks in Section~\ref{sec:dlc}. A second application
of the minimal model program then brings us to a dlc logarithmic pair $(Z_\mu,
\Delta_\mu)$ that shares many of the good properties of $(Z_\lambda,
\Delta_\lambda)$. In addition, it will turn out in
Section~\ref{ssec:fiberspace} that $Z_\lambda$ has the structure of a Mori
fiber space. An analysis of the Viehweg-Zuo sheaf along the fibers will be
essential.

\subsection{Minimal models of $\boldsymbol{(Z_\lambda, (1-\varepsilon)\Delta_\lambda)}$}
\label{ssec:furthercontr}

As a first step in the program outlined in Section~\ref{ssec:outline}, we
claim that the boundary $\Delta_\lambda$ is not empty,
$\Delta_\lambda\neq\emptyset$.  In fact, using \eqref{eq:torsion} and the
existence of the Viehweg-Zuo sheaf $\sA_\lambda$, this follows immediately
from Corollary~\ref{cor:boundarynonempty}. In particular, \eqref{eq:torsion}
implies that $K_{Z_\lambda} \equiv - \Delta_\lambda$ and it follows that for
any rational number $0 < \varepsilon < 1$,
\begin{equation}\label{eq:kappa0infty}
  \kappa\bigl(K_{Z_\lambda}+(1-\varepsilon) \Delta_\lambda \bigr) =
  \kappa\bigl(\varepsilon K_{Z_\lambda} \bigr) =
  \kappa\bigl({Z_\lambda} \bigr) = -\infty.
\end{equation}

Now choose one $\varepsilon$ and run the log minimal model program for the dlt
pair $\bigl(Z_\lambda, (1-\varepsilon)\Delta_\lambda\bigr)$. This way one
obtains a birational map $\mu: Z_\lambda \ratmap Z_\mu$.  Let $\Delta_\mu$ be
the cycle-theoretic image of $\Delta_\lambda$.  The variety $Z_\mu$ is
$\Q$-factorial and the pair $\bigl(Z_\mu, (1-\varepsilon) \Delta_\mu \bigr)$
is then dlt.

\begin{claim}\label{claim:smuklt}
  The logarithmic pair $(Z_\mu, \Delta_\mu)$ is dlc.
\end{claim}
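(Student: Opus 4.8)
The plan is to unravel the definition of dlc directly: I need to show that $(Z_\mu, \Delta_\mu)$ is log canonical, that $\Delta_\mu$ is $\Q$-Cartier, and that $\bigl(Z_\mu, (1-\delta)\Delta_\mu\bigr)$ is dlt for all sufficiently small rational $\delta > 0$. The $\Q$-Cartier property of $\Delta_\mu$ is immediate: $Z_\mu$ is $\Q$-factorial by construction (it is the output of a log MMP run), so every Weil divisor on it is $\Q$-Cartier, in particular $\Delta_\mu$. Moreover $K_{Z_\mu} = (K_{Z_\mu} + (1-\varepsilon)\Delta_\mu) - (1-\varepsilon)\Delta_\mu$ is then also $\Q$-Cartier, and hence $K_{Z_\mu} + \Delta_\mu$ is $\Q$-Cartier as well, so it makes sense to talk about discrepancies for the pair $(Z_\mu, \Delta_\mu)$.

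Next I would address log canonicity of $(Z_\mu, \Delta_\mu)$. The key point is that the log MMP for $\bigl(Z_\lambda, (1-\varepsilon)\Delta_\lambda\bigr)$ is $(K_{Z_\lambda} + (1-\varepsilon)\Delta_\lambda)$-negative, and by Equation~\eqref{eq:torsion} we have $K_{Z_\lambda} + \Delta_\lambda \equiv 0$, hence the steps of this MMP are also $(K_{Z_\lambda}+\Delta_\lambda)$-trivial — each extremal ray $R$ contracted satisfies $(K_{Z_\lambda}+(1-\varepsilon)\Delta_\lambda)\cdot R < 0$ while $(K_{Z_\lambda}+\Delta_\lambda)\cdot R = 0$, so in fact $\Delta_\lambda \cdot R > 0$. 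Thus $\mu$ is a composition of flips and divisorial contractions that are crepant for the pair $(Z_\lambda, \Delta_\lambda)$. Since $(Z_\lambda, \Delta_\lambda)$ is dlt — in particular log canonical — and log canonicity (indeed the whole set of discrepancies over a fixed divisor) is preserved under crepant birational steps, the pair $(Z_\mu, \Delta_\mu)$ is log canonical.

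Finally I would verify the dlt condition for the slightly thinned pair. For $0 < \delta \leq \varepsilon$ one has $(1-\delta)\Delta_\mu = (1-\varepsilon)\Delta_\mu + (\varepsilon - \delta)\Delta_\mu$, and since $(Z_\mu, (1-\varepsilon)\Delta_\mu)$ is dlt and $\Delta_\mu$ is effective $\Q$-Cartier, the pair $(Z_\mu, (1-\delta)\Delta_\mu)$ is dlt for all sufficiently small $\delta$: adding a small multiple of an effective $\Q$-Cartier divisor to the boundary of a dlt pair keeps it dlt, because dlt is an open condition on the coefficients and is stable along the smooth locus, while the discrepancy condition $a(E,\cdot) > -1$ over the snc locus varies continuously. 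More precisely, invoking the characterization of dlt via a log resolution (there is a log resolution extracting only divisors of discrepancy $> -1$), the discrepancies for $(Z_\mu, (1-\delta)\Delta_\mu)$ depend continuously on $\delta$ and agree with those for $(Z_\mu, (1-\varepsilon)\Delta_\mu)$ at $\delta = \varepsilon$, so strict inequality persists for $\delta$ near $\varepsilon$; and since having coefficients closer to $0$ only improves singularities, the range $0 < \delta \leq \varepsilon$ works. Combining the three verified properties gives that $(Z_\mu, \Delta_\mu)$ is dlc.

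The main obstacle I anticipate is the second step — keeping careful track of the fact that although the MMP is run for the \emph{thinned} pair $(Z_\lambda, (1-\varepsilon)\Delta_\lambda)$, it is simultaneously crepant for the \emph{full} pair $(Z_\lambda, \Delta_\lambda)$ because of the numerical triviality in~\eqref{eq:torsion}; this is what transports log canonicity of $(Z_\lambda, \Delta_\lambda)$ to $(Z_\mu, \Delta_\mu)$, rather than merely transporting properties of the thinned pair. One should also take mild care that flips in the thinned MMP do not worsen the full pair, but this again follows from crepancy: a flip is an isomorphism in codimension one, and the flipped pair has the same discrepancies as before for a $(K+\Delta)$-trivial ray.
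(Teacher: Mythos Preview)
Your Steps~1 and~2 are correct, and Step~2 in particular isolates the key point: because $K_{Z_\lambda}+\Delta_\lambda\equiv 0$, every step of the $(K_{Z_\lambda}+(1-\varepsilon)\Delta_\lambda)$-MMP is $(K_{Z_\lambda}+\Delta_\lambda)$-trivial, so log canonicity of the full pair is transported to $(Z_\mu,\Delta_\mu)$.

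Step~3, however, contains a genuine error. You want $(Z_\mu,(1-\delta)\Delta_\mu)$ to be dlt for all \emph{small} $\delta>0$, i.e.\ for coefficients $(1-\delta)$ close to~$1$. Your continuity argument only yields dlt for $\delta$ in a neighbourhood of the fixed~$\varepsilon$. The sentence ``since having coefficients closer to $0$ only improves singularities, the range $0<\delta\leq\varepsilon$ works'' has the inequality backwards: for $\delta\in(0,\varepsilon]$ the coefficient $(1-\delta)\in[1-\varepsilon,1)$ is \emph{larger} than $1-\varepsilon$, so discrepancies get \emph{worse}, not better. Thus nothing you wrote covers $\delta$ near~$0$.

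The gap is easy to close with what you already have. Since all coefficients of $(1-\varepsilon)\Delta_\mu$ are strictly less than~$1$, the dlt pair $\bigl(Z_\mu,(1-\varepsilon)\Delta_\mu\bigr)$ is in fact klt. Discrepancies $a\bigl(E,Z_\mu,(1-\delta)\Delta_\mu\bigr)$ are affine-linear in~$\delta$; combining $a\geq -1$ at $\delta=0$ (your Step~2) with $a>-1$ at $\delta=\varepsilon$ gives $a>-1$ for every $0<\delta<1$, hence $\bigl(Z_\mu,(1-\delta)\Delta_\mu\bigr)$ is klt, in particular dlt, for all such~$\delta$.

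The paper reaches the same conclusion by a shorter route that avoids separating the three verifications. It observes that $K_{Z_\lambda}+(1-\varepsilon')\Delta_\lambda\equiv -\varepsilon'\Delta_\lambda$ for every $0<\varepsilon'<1$, so all these divisors are positive multiples of one another numerically; hence the \emph{same} sequence of contractions and flips $\mu$ is an MMP for $\bigl(Z_\lambda,(1-\varepsilon')\Delta_\lambda\bigr)$ regardless of~$\varepsilon'$. Since each of these starting pairs is dlt, so is each output $\bigl(Z_\mu,(1-\varepsilon')\Delta_\mu\bigr)$, for every $\varepsilon'\in(0,1)$; dlc (including log canonicity of the limit) follows at once. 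This is essentially your crepancy observation, applied uniformly in~$\varepsilon'$ rather than only at the endpoints.
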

\begin{proof}
  By~\eqref{eq:torsion} some positive multiples of $K_{Z_\lambda}$ and
  $-\Delta_\lambda$ are numerically equivalent. For any two rational numbers
  $0 < \varepsilon', \varepsilon'' < 1$, the divisors $K_{Z_\lambda} +
  (1-\varepsilon') \Delta_\lambda$ and $K_{Z_\lambda} + (1-\varepsilon'')
  \Delta_\lambda$ are thus again numerically equivalent up to a positive
  rational multiple.
  
  The birational map $\mu$ is therefore a minimal model program for the pair
  $\bigl(Z_\lambda, (1-\varepsilon) \Delta_\lambda \bigr)$, independently of
  the number $\varepsilon$ chosen in its construction. It follows that
  $\bigl(Z_\mu, (1-\varepsilon') \Delta_\mu \bigr)$ has dlt singularities for
  all $0 < \varepsilon' < 1$, so $(Z_\mu, \Delta_\mu)$ is indeed dlc.
\end{proof}

\subsection{The fiber space structure of $\boldsymbol{Z_\mu}$}
\label{ssec:fiberspace}

Since the Kodaira-dimension of $\bigl(Z_\lambda,
(1-\varepsilon)\Delta_\lambda\bigr)$ is negative by~\eqref{eq:kappa0infty},
either $\rho(Z_\mu)=1$, or $\rho(Z_\mu)>1$ and the pair $\bigl( Z_\mu,
(1-\varepsilon) \Delta_\mu\bigr)$ admits an extremal contraction of fiber
type.  We apply Theorem~\ref{thm:b2thm} in order to show that the Picard
number cannot be one.

\begin{prop}\label{prop:mfs}
  The Picard number of $Z_\mu$ is not one. The pair $\bigl( Z_\mu,
  (1-\varepsilon) \Delta_\mu\bigr)$ therefore admits a non-trivial extremal
  contraction of fiber type, $\pi : Z_\mu \to W$.
\end{prop}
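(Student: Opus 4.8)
The plan is to derive the conclusion by contradiction, assuming $\rho(Z_\mu)=1$, and then invoking Theorem~\ref{thm:b2thm} to produce a contradiction. To apply that theorem we must verify its two hypotheses for the pair $(Z_\mu, \Delta_\mu)$: first, that $(Z_\mu,\Delta_\mu)$ is a log canonical pair on a projective $\Q$-factorial variety of dimension at most $3$ carrying a Viehweg-Zuo sheaf of positive Kodaira-Iitaka dimension; and second, that $-(K_{Z_\mu}+\Delta_\mu)$ is nef.

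For the first hypothesis: $(Z_\mu,\Delta_\mu)$ is dlc by Claim~\ref{claim:smuklt}, hence in particular log canonical, and $Z_\mu$ is $\Q$-factorial, projective, of dimension $\dim Z \leq 3$ by construction. The birational map $\mu : Z_\lambda \dasharrow Z_\mu$ arises from a run of the minimal model program, so its inverse $\mu^{-1}$ contracts no divisor; since $\Delta_\mu$ is the cycle-theoretic image of $\Delta_\lambda$, Lemma~\ref{lem:pushdownA} applies and yields a Viehweg-Zuo sheaf $\sA_\mu \subseteq \bigl(\Omega^1_{Z_\mu}(\log\Delta_\mu)\bigr)^{[n]}$ with $\kappa(\sA_\mu)\geq\kappa(\sA_\lambda)>0$. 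The second hypothesis is the geometric heart of the matter. The map $\mu$ is by construction a run of the minimal model program for the pair $\bigl(Z_\lambda,(1-\varepsilon)\Delta_\lambda\bigr)$, so $(Z_\mu,(1-\varepsilon)\Delta_\mu)$ is a minimal model or a Mori fiber space; since $\kappa\bigl(K_{Z_\lambda}+(1-\varepsilon)\Delta_\lambda\bigr)=-\infty$ by~\eqref{eq:kappa0infty}, the divisor $K_{Z_\mu}+(1-\varepsilon)\Delta_\mu$ cannot be nef, so under the assumption $\rho(Z_\mu)=1$ it must be anti-ample. Equivalently $-\bigl(K_{Z_\mu}+(1-\varepsilon)\Delta_\mu\bigr)$ is ample. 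On the other hand, from~\eqref{eq:torsion} we have $K_{Z_\lambda}\equiv-\Delta_\lambda$, and this numerical equivalence is preserved by the minimal model program step $\mu$ (as in the proof of Claim~\ref{claim:smuklt}), so $K_{Z_\mu}+\Delta_\mu\equiv 0$, i.e.\ $-(K_{Z_\mu}+\Delta_\mu)$ is numerically trivial, hence in particular nef.

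With both hypotheses verified, Theorem~\ref{thm:b2thm} gives $\rho(Z_\mu)>1$, contradicting the assumption. Therefore $\rho(Z_\mu)\neq 1$. Since the Kodaira dimension of $\bigl(Z_\lambda,(1-\varepsilon)\Delta_\lambda\bigr)$ is negative, the remaining alternative from the dichotomy recalled at the start of Section~\ref{ssec:fiberspace} forces $\bigl(Z_\mu,(1-\varepsilon)\Delta_\mu\bigr)$ to admit an extremal contraction of fiber type $\pi : Z_\mu\to W$; non-triviality (i.e.\ properness of the fibration) follows since an extremal contraction of fiber type on a variety of Picard number greater than one necessarily has $0<\dim F<\dim Z_\mu$ for the general fiber $F$, because $\rho(W)\geq 1$ forces $\dim W\geq 1$.

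I expect the main subtlety to be the bookkeeping around the numerical equivalence $K_{Z_\mu}+\Delta_\mu\equiv 0$ and the distinction between the boundary $(1-\varepsilon)\Delta_\mu$ used to run the program and the full boundary $\Delta_\mu$ appearing in the Viehweg-Zuo sheaf and in the hypotheses of Theorem~\ref{thm:b2thm}: one must be careful that the anti-nef-ness of $-(K_{Z_\mu}+\Delta_\mu)$ (needed for Theorem~\ref{thm:b2thm}) and the anti-ample-ness of $-\bigl(K_{Z_\mu}+(1-\varepsilon)\Delta_\mu\bigr)$ (coming from $\rho=1$ together with non-nefness) are genuinely compatible — which they are precisely because $K_{Z_\mu}\equiv-\Delta_\mu$ makes the two $\Q$-divisors positive rational multiples of each other. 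The rest is routine verification of hypotheses.
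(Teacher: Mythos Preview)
Your proof is correct and follows essentially the same approach as the paper: verify the hypotheses of Theorem~\ref{thm:b2thm} for $(Z_\mu,\Delta_\mu)$ using Claim~\ref{claim:smuklt}, Lemma~\ref{lem:pushdownA}, and the numerical triviality of $K_{Z_\mu}+\Delta_\mu$ inherited from~\eqref{eq:torsion}, then conclude $\rho(Z_\mu)>1$. Your contradiction framing and the detour through the anti-ampleness of $K_{Z_\mu}+(1-\varepsilon)\Delta_\mu$ are unnecessary---the paper applies Theorem~\ref{thm:b2thm} directly and obtains nefness of $-(K_{Z_\mu}+\Delta_\mu)$ simply from its being torsion---but they do no harm.
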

\begin{proof}
  As the birational map $\mu$ is a sequence of extremal divisorial
  contractions and flips, the inverse of $\mu$ does not contract any
  divisors. This has two consequences. First, the divisor
  $K_{Z_\mu}+\Delta_\mu$ is torsion, and $-(K_{Z_\mu}+\Delta_\mu)$ is nef. On
  the other hand, Lemma~\ref{lem:pushdownA} applies and shows the existence of
  a Viehweg-Zuo sheaf $\sA_\mu$ of positive Kodaira-Iitaka dimension. Since we
  have seen in Claim~\ref{claim:smuklt} that $(Z_\mu, \Delta_\mu)$ is dlc, in
  particular log canonical, and since we know that $Z_\mu$ is $\Q$-factorial,
  Theorem~\ref{thm:b2thm} then gives that $\rho(Z_\mu)>1$, as desired.
\end{proof}

Now let $F \subset Z_\mu$ be a general fiber of $\pi$, and set $\Delta_F :=
\Delta_\mu \cap F$. Since normality is preserved when passing to general
elements of base point free systems, \cite[Thm.~1.7.1]{BS95}, and since
discrepancies only increase, the logarithmic pair $(F, \Delta_F)$ is again
dlc.

\begin{rem}\label{rem:bdryFne}
  The adjunction formula gives that $K_F+\Delta_F$ is torsion. On the other
  hand, $\pi$ is an extremal contraction so $-\bigl(
  K_F+(1-\varepsilon)\Delta_F \bigr)$ is $\pi$-ample. It follows that the
  boundary divisor of $F$ cannot be empty, $\Delta_F \not = \emptyset$. It is
  not clear to us whether in general $F$ is necessarily $\Q$-factorial.
\end{rem}

\subsection{Non-triviality of $\boldsymbol{{\scr{A}}_\lambda\resto{\Delta_\lambda}}$} 
\label{ssec:nontriviality}

As in Section~\ref{ssec:setup}, Lemma~\ref{lem:pushdownA} guarantees the
existence of a Viehweg-Zuo sheaf $\sA_\mu$ on $(Z_\mu, \Delta_\mu)$ with
$\kappa(\sA_\mu) \geq \kappa(\sA_\lambda)>0$. Again, passing to a suitable
reflexive power, we can assume that $\sA_\mu$ is invertible and that
$h^0(Z_\mu,\, \sA_\mu) > 0$.

\begin{prop}\label{prop:c6}
  The restriction $\sA_\mu\resto{F}$ has Kodaira-Iitaka dimension zero,
  $\kappa(\sA_\mu\resto{F})=0$.
\end{prop}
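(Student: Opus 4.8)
The plan is to show $\kappa(\sA_\mu\resto{F}) = 0$ by proving two inequalities: first that $\kappa(\sA_\mu\resto{F}) \geq 0$, and second that $\kappa(\sA_\mu\resto{F}) \leq 0$. The lower bound should be the easy half. Since $h^0(Z_\mu,\, \sA_\mu) > 0$, there is a nonzero section $s \in H^0(Z_\mu,\,\sA_\mu)$. Because $F$ is a \emph{general} fiber of the base-point-free fibration $\pi$, it avoids the zero-free locus issues: the restriction $s\resto F$ is a section of $\sA_\mu\resto F$ that is not identically zero (the zero divisor of $s$ does not contain the general fiber $F$), hence $h^0(F,\, \sA_\mu\resto F) > 0$ and $\kappa(\sA_\mu\resto F) \geq 0$. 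Strictly speaking one should restrict to the smooth locus of $F$, but since $F$ is general it lies in the locus where $\sA_\mu$ is locally free and $F$ itself is normal, so this causes no trouble.

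The substantive direction is the upper bound $\kappa(\sA_\mu\resto F) \leq 0$. Here I would invoke the induction setup: the pair $(F, \Delta_F)$ is dlc (noted just before Remark~\ref{rem:bdryFne}) of dimension $\dim F < \dim Z$, with $K_F + \Delta_F$ torsion by adjunction, and $\Delta_F \neq \emptyset$ by Remark~\ref{rem:bdryFne}. If $\sA_\mu\resto F$ (or rather its reflexive hull on $F$) were a Viehweg-Zuo sheaf on $(F, \Delta_F)$ of positive Kodaira-Iitaka dimension, then — after possibly passing to a $\Q$-factorialization or log resolution and pushing the sheaf forward via Lemma~\ref{lem:pushdownA}, or directly applying Proposition~\ref{prop:6.2} in lower dimension once the dlt/$\Q$-factorial hypotheses are arranged — the Induction Hypothesis~\ref{ihyp:p81} would force $\kappa \leq 0$, a contradiction. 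So the real content of this proposition is verifying that $\sA_\mu\resto F$ genuinely embeds into some reflexive tensor power of $\Omega^1_F(\log \Delta_F)$, i.e.\ that restricting a Viehweg-Zuo sheaf to a general fiber produces a Viehweg-Zuo sheaf on the fiber.

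To get that embedding, the plan is to restrict the inclusion $\sA_\mu \into \bigl(\Omega^1_{Z_\mu}(\log \Delta_\mu)\bigr)^{[n]}$ to the general fiber $F$, which meets $(Z_\mu,\Delta_\mu)_{\sing}$ in a set of codimension $\geq 2$ in $F$ and meets $\Delta_\mu$ transversely, and then use the standard conormal/restriction exact sequence for logarithmic differentials along a general fiber of a fibration — i.e.\ the sequence relating $\Omega^1_{Z_\mu}(\log \Delta_\mu)\resto F$, $\Omega^1_F(\log \Delta_F)$, and the (trivial, since $F$ is a fiber) conormal-type quotient $\pi^*\Omega^1_W\resto F \cong \sO_F^{\oplus \dim W}$, as in \cite[Lem.~2.13]{KK08}. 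Applying the Reduction Lemma~\ref{lem:reduction}, part~\iref{item:3}, with $\sF \cong \sO_F^{\oplus\dim W}$ trivial, converts the inclusion $\sA_\mu\resto F \into \bigl(\Omega^1_{Z_\mu}(\log\Delta_\mu)\resto F\bigr)^{\otimes n}$ into an inclusion $\sA_\mu\resto F \into \bigl(\Omega^1_F(\log \Delta_F)\bigr)^{\otimes p}$ for some $p \leq n$ (one has to iterate the lemma over the $\dim W$-dimensional trivial factor, or phrase the sequence with a single trivial summand of rank $\dim W$ and apply the lemma as stated). Taking reflexive hulls on $F$ then exhibits $\sA_\mu\resto F$ as a Viehweg-Zuo sheaf on $(F,\Delta_F)$.

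The main obstacle I anticipate is the bookkeeping around \emph{generality} of $F$: one needs $F$ to simultaneously avoid $(Z_\mu,\Delta_\mu)_{\sing}$ in codimension $\geq 2$, be normal, have $(F,\Delta_F)$ dlc with $\sA_\mu$ locally free along it, meet $\Delta_\mu$ transversally so that the logarithmic restriction sequence is exact and $\Delta_F$ is reduced, and carry a nonzero restricted section of $\sA_\mu$. All of these hold for general $F$ by standard Bertini-type arguments plus \cite[Thm.~1.7.1]{BS95}, but assembling them cleanly — and in particular checking that the reflexive restriction of $\sA_\mu$ to $F$ agrees with the ordinary restriction away from a codimension-$\geq 2$ set, so that the Kodaira-Iitaka dimension computed on $F$ is the honest one — is where the care is needed. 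A secondary point is matching the hypotheses of the induction: Proposition~\ref{prop:6.2} as stated wants $Z$ $\Q$-factorial and $(Z,\Delta)$ dlt, whereas $(F,\Delta_F)$ is only known to be dlc and possibly not $\Q$-factorial (Remark~\ref{rem:bdryFne}), so one likely passes to a dlt model / $\Q$-factorialization of $(F,\Delta_F)$ first, using Lemma~\ref{lem:pushdownA} to transport $\sA_\mu\resto F$ there while preserving positivity of the Kodaira-Iitaka dimension, before invoking the induction hypothesis to derive the contradiction.
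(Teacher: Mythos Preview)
Your overall strategy matches the paper's proof: restrict to the general fiber, use the logarithmic conormal sequence together with the Reduction Lemma to realize $\sA_\mu\resto F$ as a Viehweg-Zuo sheaf on $(F,\Delta_F)$, and then appeal to the induction hypothesis in lower dimension; the lower bound $\kappa(\sA_\mu\resto F)\geq 0$ comes, as you say, from restricting a nonzero global section.

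There is one genuine gap in how you propose to reach the induction hypothesis. You suggest passing to a $\Q$-factorialization or dlt model of $(F,\Delta_F)$ and transporting the sheaf with Lemma~\ref{lem:pushdownA}. This does not work as stated: Lemma~\ref{lem:pushdownA} \emph{pushes forward} along a birational map whose inverse contracts no divisors, so it goes in the wrong direction for a modification $F'\to F$; and a small $\Q$-factorialization of a dlc pair is still only dlc, not dlt, so you would not land in the hypotheses of Proposition~\ref{prop:6.2} anyway. The paper instead takes a log resolution $\psi:(\wtilde F,\wtilde\Delta_F)\to(F,\Delta_F)$ and invokes Theorem~\ref{thm:VZsheafextension2}, the extension theorem for Viehweg-Zuo sheaves on dlc surfaces, to \emph{pull back} $\sA_\mu\resto F$ to a Viehweg-Zuo sheaf $\sC$ on the snc pair $(\wtilde F,E)$ with $\kappa(\sC)\geq\kappa(\sA_\mu\resto F)$. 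Since $(F,\Delta_F)$ is dlc with $K_F+\Delta_F$ torsion, one checks that $\kappa(K_{\wtilde F}+E)=0$, and now $(\wtilde F,E)$ is smooth, hence $\Q$-factorial and dlt, so the Induction Hypothesis~\ref{ihyp:p81} applies directly. The crucial tool you are missing is Theorem~\ref{thm:VZsheafextension2}, not Lemma~\ref{lem:pushdownA}.
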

\begin{proof}
  Consider the open set $F^\circ := (F, \Delta_F)_{\reg} \cap (Z_\mu,
  \Delta_\mu)_{\reg}$. The fiber $F$ being general, it is clear that
  $\codim_F(F \setminus F^\circ) \geq 2$. On $F^\circ$, the standard conormal
  sequence \cite[Lem.~2.13]{KK08} for logarithmic differentials then gives a
  short exact sequence of locally free sheaves, as follows,
  \begin{equation}\label{eq:conorm}
    0 \longrightarrow \underbrace{\pi^* \bigl(\Omega^1_W\bigr)\resto{F^\circ}}_{\text{trivial}}  \longrightarrow
    \Omega^1_{Z_\mu}(\log \Delta_\mu)\resto{F^\circ}  \longrightarrow \Omega^1_F(\log
    \Delta_F)\resto{F^\circ}  \longrightarrow 0.
  \end{equation}  
  By the definition of a ``Viehweg-Zuo sheaf'', there exists a number $n \in
  \mathbb N$ and an embedding $\sA_{\mu}\resto{F^\circ} \to \bigl(
  \Omega^1_{Z_\mu}(\log \Delta_\mu)\resto{F^\circ} \bigr)^{\otimes n}$. The
  first term in~\eqref{eq:conorm} being trivial, Lemma~\ref{lem:reduction}
  gives a number $m \leq n$ and an injection
  \begin{equation}\label{eq:embed}
    \sA_{\mu}\resto{F^\circ} \into \left( \Omega^1_F(\log \Delta_F)\resto{F^\circ}
    \right)^{\otimes m}.
  \end{equation}
  Recall that $\sA_\mu$ is invertible. Then by \eqref{eq:embed} we obtain an
  injection between the reflexive hulls $\sA_\mu\resto{F} \into \bigl(
  \Omega^1_F(\log \Delta_F) \bigr)^{[m]}$, i.e., we realize $\sA_\mu\resto{F}$
  as a Viehweg-Zuo sheaf on $(F, \Delta_F)$.
  
  The log canonical divisor $K_F+\Delta_F$ being torsion,
  Proposition~\ref{prop:c6} follows immediately if $F$ is a curve. We will
  thus assume for the remainder of the proof that $\dim F=2$.
  
  It remains to show that the Viehweg-Zuo sheaf $\sA_\mu\resto{F}$ on $(F,
  \Delta_F)$ has Kodaira-Iitaka dimension $\kappa(\sA_\mu\resto{F}) \leq
  0$. The fact that $\kappa(\sA_\mu) > 0$ will then imply that
  $\kappa(\sA_\mu\resto{F}) = 0$, as claimed. In order to do this, consider a
  log resolution $\psi: (\wtilde F, \wtilde \Delta_F) \to (F,
  \Delta_F)$. Setting
  $$
  E := \text{maximal reduced divisor in } \psi^{-1}(\Delta_F) \cup \Exc(\psi),
  $$
  it follows immediately from the definition of dlc that $K_{\wtilde F} + E$
  is represented by the sum of a torsion divisor and an effective,
  $\psi$-exceptional divisor. In particular, $\kappa(K_{\wtilde F} + E) = 0$,
  and Theorem~\ref{thm:VZsheafextension2} gives the existence of a Viehweg-Zuo
  sheaf $\sC$ on the snc pair $(\wtilde F, E)$ with $\kappa(\sC) \geq
  \kappa(\sA_\mu\resto{F})$. However, this contradicts the Induction
  Hypothesis~\ref{ihyp:p81}, which asserts that $\kappa(\sC) \leq 0$.
\end{proof}

\begin{cor}\label{cor:trivialA}
  The restriction $\sA_\mu\resto{F}$ is trivial, i.e., $\sA_\mu\resto{F} \cong
  \sO_F$.
\end{cor}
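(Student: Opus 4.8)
The plan is to first pin down $h^0(F,\sA_\mu\resto F)$, which reduces the statement to the vanishing of an effective divisor, and then to exploit the Fano-type structure of the general fiber $F$ by running the argument of Corollary~\ref{cor:VZonFanoSurface} in a sharper form.

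First I would show that $h^0(F,\sA_\mu\resto F)=1$. By the construction recalled at the beginning of Section~\ref{ssec:nontriviality} we have $h^0(Z_\mu,\sA_\mu)>0$; since $\pi$ is of fiber type its fibers cover $Z_\mu$, so a general fiber $F$ is not contained in the vanishing locus of a nonzero section of $\sA_\mu$, whence $h^0(F,\sA_\mu\resto F)\ge 1$. On the other hand $\sA_\mu\resto F$ is invertible (because $\sA_\mu$ is), and $\kappa(\sA_\mu\resto F)=0$ by Proposition~\ref{prop:c6}; for an invertible sheaf this forces $h^0\bigl(F,(\sA_\mu\resto F)^{\otimes k}\bigr)\le 1$ for every $k$, since two independent sections of a line bundle give a rational map with positive-dimensional image. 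Hence $h^0(F,\sA_\mu\resto F)=1$ and $\sA_\mu\resto F\cong\sO_F(E)$ for a uniquely determined effective divisor $E\ge 0$, and it remains to prove $E=0$. If $\dim F=1$ this is immediate: $F\cong\P^1$, the torsion of $K_F+\Delta_F$ (Remark~\ref{rem:bdryFne}) makes $\Omega^1_F(\log\Delta_F)$ the trivial line bundle, and an inclusion $\sO_F(E)\hookrightarrow\sO_F^{\otimes m}$ with $E$ effective forces $E=0$.

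So assume $\dim F=2$. As $(F,\Delta_F)$ is dlc, the pair $(F,(1-\varepsilon)\Delta_F)$ is klt, hence $F$ is $\Q$-factorial, and $-(K_F+(1-\varepsilon)\Delta_F)\equiv\varepsilon\Delta_F$ is exactly the ample divisor of Remark~\ref{rem:bdryFne}; thus $(F,(1-\varepsilon)\Delta_F)$ is of the type treated in Corollary~\ref{cor:VZonFanoSurface}, and I would rerun that proof while keeping track of the actual divisor rather than merely its Kodaira–Iitaka dimension. If $\rho(F)=1$, a nonzero effective $E$ would be $\Q$-ample, so $\kappa(\sO_F(E))=2$, contradicting $\kappa(\sA_\mu\resto F)=0$; hence $E=0$. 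If $\rho(F)\ge 2$, then by the Cone Theorem $F$ carries extremal contractions; for a fiber-type one $\pi_R\colon F\to C_R$ the general fiber $\ell$ is a rational curve in the snc locus meeting $\Delta_F$ in at most one point, so $\Omega^{[1]}_F(\log\Delta_F)\resto\ell$ is anti-nef, no tensor power of it admits a sub-line-bundle of positive degree, and therefore $E\cdot\ell\le 0$; as $E\ge 0$ and $\ell$ moves in a covering family we get $E\cdot\ell=0$, i.e.\ $E$ is $\pi_R$-vertical. Two distinct fiber-type rays then force $E=0$, because every component of $E$ has numerical class in $R_1\cap R_2=\{0\}$. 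The remaining configuration — at most one fiber-type ray together with divisorial rays — is reduced to these after contracting the divisorial extremal rays (which preserves $\Q$-ampleness of $-(K+(1-\varepsilon)\Delta)$), provided one traces $\sO_F(E)$ through those contractions and excludes, via the Bogomolov-Sommese vanishing Theorem~\ref{thm:Bvanishing} on the $\Q$-factorial $F$ (a positive-degree destabilizing subsheaf of $\Omega^{[1]}_F(\log\Delta_F)$ is $\Q$-Cartier, so its Kodaira–Iitaka dimension is $\le 1$) or via the ambient geometry, that $E$ is supported on a contracted curve. This yields $E=0$ and $\sA_\mu\resto F\cong\sO_F$.

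The step I expect to be the main obstacle is precisely this last one: upgrading ``$\kappa(\sA_\mu\resto F)\le 0$'', which Corollary~\ref{cor:VZonFanoSurface} supplies immediately, to the strict triviality $\sA_\mu\resto F\cong\sO_F$ — equivalently, controlling the effective divisor $E$ along the divisorial contractions of the minimal model program of $(F,(1-\varepsilon)\Delta_F)$. The intrinsic data on $F$ (namely $\kappa=0$, $h^0=1$, and the embedding into a tensor power of $\Omega^1_F(\log\Delta_F)$) do not by themselves obviously rule out $E$ being an exceptional curve, so to conclude one must feed in either Bogomolov-Sommese vanishing together with the $\Q$-factorial Fano-type structure of $F$, or the geometry of the extremal contraction $\pi\colon Z_\mu\to W$ on the total space.
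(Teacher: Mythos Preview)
Your approach is intrinsic to the fiber $F$, and the gap you identify at the end is real: from $\kappa(\sA_\mu\resto F)=0$ and $h^0=1$ alone, together with the Viehweg--Zuo embedding, one cannot exclude that the effective divisor $E$ is supported on an exceptional curve of some divisorial contraction of $F$. The paper avoids this difficulty entirely by working on the total space $Z_\mu$ and exploiting the one piece of structure you mention only in passing at the very end: the extremal contraction $\pi\colon Z_\mu\to W$ has relative Picard number one.

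The paper's argument is this. Write $\sA_\mu\cong\sO_{Z_\mu}(D)$ with $D\ge 0$ an effective Cartier divisor on $Z_\mu$ (not on $F$), and split $D=D^h+D^v$ into horizontal and vertical parts with respect to $\pi$. For general $F$ one has $\sA_\mu\resto F\cong\sO_F(D^h\resto F)$. If $D^h\neq 0$, then since $\rho(Z_\mu/W)=1$ and $D^h$ is a nonzero effective divisor dominating $W$, the divisor $D^h$ is $\pi$-ample; hence $D^h\resto F$ is ample and $\kappa(\sA_\mu\resto F)=\dim F>0$, contradicting Proposition~\ref{prop:c6}. Thus $D^h=0$ and $\sA_\mu\resto F\cong\sO_F$.

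This is exactly the ``geometry of the extremal contraction $\pi$ on the total space'' that you list as a possible fix, and it renders the intrinsic MMP analysis of $F$ unnecessary: no case split on $\rho(F)$, no tracking of $E$ through divisorial contractions, no appeal to Bogomolov--Sommese on $F$. The lesson is that $\rho(Z_\mu/W)=1$ is far stronger than anything available intrinsically on $F$; it forces every nonzero effective horizontal divisor to be relatively ample, which immediately upgrades ``$\kappa=0$ on $F$'' to ``trivial on $F$''.
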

\begin{proof}
  Since $\sA_\mu$ is invertible and $h^0(Z_\mu,\, \sA_\mu) > 0$, there exists
  an effective Cartier divisor $D$ on $Z_\mu$ with $\sA_\mu \cong
  \sO_{Z_\mu}(D)$.  Decompose $D = D^h + D^v$, where $D^h$ consists of those
  components that dominate $W$, and $D^v$ of those components that do not. We
  need to show that $D^h = 0$. Again, we argue by contradiction and assume
  that $D^h$ is non-trivial.
  
  Recall that $\pi: Z_\mu \to W$ is a contraction of an extremal ray and that
  the relative Picard number $\rho(Z_\mu/W)$ is therefore one. The divisor
  $D^h$ is thus relatively ample, contradicting Proposition~\ref{prop:c6}.
\end{proof}

\begin{cor}\label{cor:nontriv}
  There exists a component $\Delta_{\lambda,1} \subseteq \Delta_\lambda$ such
  that $\kappa(\sA_\lambda\resto{\Delta_{\lambda,1}}) > 0$.
\end{cor}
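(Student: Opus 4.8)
\emph{Approach.} The plan is to read the component off directly from the Mori fiber space $\pi\colon Z_\mu\to W$ furnished by Proposition~\ref{prop:mfs}, exploiting the contrast between Corollary~\ref{cor:trivialA} (triviality of $\sA_\mu$ on a general $\pi$-fibre) and Remark~\ref{rem:bdryFne} (non-triviality of the boundary on a general $\pi$-fibre), and then to transport the resulting component of $\Delta_\mu$ back along $\mu$. First I would fix, as in Corollary~\ref{cor:trivialA}, an effective Cartier divisor $D=D^v$ with $\sA_\mu\cong\sO_{Z_\mu}(D)$ and with $D$ purely $\pi$-vertical. Since $D$ is vertical it misses the general fibre $F$, and since the general fibre of the extremal contraction $\pi$ is connected and reduced we have $h^0(F,\sO_F)=1$; consequently every section of $\sO_{Z_\mu}(mD)$ is constant along $F$, so the associated rational map $\phi_{mD}$ contracts the general fibre and therefore factors rationally through $\pi$. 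Choosing $m$ with $\dim\overline{\phi_{mD}(Z_\mu)}=\kappa(\sA_\mu)$, this produces a rational map $\psi\colon W\dasharrow\P^N$ with $\phi_{mD}=\psi\circ\pi$ and $\dim\overline{\psi(W)}=\kappa(\sA_\mu)\geq\kappa(\sA_\lambda)>0$.

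Next, Remark~\ref{rem:bdryFne} gives $\Delta_F=\Delta_\mu\cap F\neq\emptyset$ for the general fibre, so $\Delta_\mu$ has a component $\Delta_{\mu,1}$ that dominates $W$. Being $\pi$-horizontal, $\Delta_{\mu,1}$ is not one of the components of the vertical divisor $D$, so $D\resto{\Delta_{\mu,1}}$ is a well-defined effective Cartier divisor on $\Delta_{\mu,1}$ and $\sA_\mu\resto{\Delta_{\mu,1}}\cong\sO_{\Delta_{\mu,1}}(D\resto{\Delta_{\mu,1}})$. Since $\pi\resto{\Delta_{\mu,1}}\colon\Delta_{\mu,1}\to W$ is surjective, restricting the factorization $\phi_{mD}=\psi\circ\pi$ to $\Delta_{\mu,1}$ shows that the linear subsystem of $|m(D\resto{\Delta_{\mu,1}})|$ obtained by restricting $H^0\bigl(Z_\mu,\sO_{Z_\mu}(mD)\bigr)$ already defines a rational map onto $\overline{\psi(W)}$, which has dimension $\kappa(\sA_\mu)$. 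Hence $\kappa\bigl(\sA_\mu\resto{\Delta_{\mu,1}}\bigr)\geq\kappa(\sA_\mu)>0$.

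It then remains to transfer this positivity to $Z_\lambda$. Since $\mu^{-1}$ contracts no divisor, $\Delta_{\mu,1}$ is the cycle-theoretic image of a unique component $\Delta_{\lambda,1}\subseteq\Delta_\lambda$, and by the construction of $\sA_\mu$ in Lemma~\ref{lem:pushdownA} the sheaves $\sA_\lambda$ and $\sA_\mu$ may be identified over an open subset of $Z_\mu$ whose complement has codimension at least two. The desired conclusion is $\kappa\bigl(\sA_\lambda\resto{\Delta_{\lambda,1}}\bigr)\geq\kappa\bigl(\sA_\mu\resto{\Delta_{\mu,1}}\bigr)>0$, and this is the one genuinely delicate point. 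The restriction of $\mu$ to $\Delta_{\lambda,1}$ need not be an isomorphism in codimension one --- a flipping curve may lie inside a boundary component --- so Lemma~\ref{lem:pushdownA} does not apply verbatim on $\Delta_{\lambda,1}$. Instead I would run through $\mu$ one elementary step at a time and restrict each step to the corresponding birational transform of $\Delta_{\lambda,1}$: a divisorial contraction restricts to a birational \emph{morphism} of the boundary components, and a flip restricts to a map that factors through two such morphisms (``contract, then extract''); in either case pulling back and pushing forward sections shows that $h^0$ of the reflexive powers of the restricted Viehweg--Zuo sheaf does not drop. Verifying this compatibility of restriction-to-$\Delta_{\lambda,1}$ with the push-forward operation of Lemma~\ref{lem:pushdownA} at every MMP step --- and that it never decreases the Kodaira--Iitaka dimension --- is the main obstacle; the rest is bookkeeping.
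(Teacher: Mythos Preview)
Your argument on $Z_\mu$ is sound and is essentially the same as the paper's: the paper shows that restriction of sections $H^0\bigl(Z_\mu,\sA_\mu^{\otimes m}\bigr)\to H^0\bigl(\Delta_{\mu,1},\sA_\mu^{\otimes m}\resto{\Delta_{\mu,1}}\bigr)$ is injective by exactly the mechanism you describe (sections are constant along the general $\pi$-fibre by Corollary~\ref{cor:trivialA}, and the horizontal component $\Delta_{\mu,1}$ meets every such fibre).

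Where you diverge from the paper is in the transfer back to $Z_\lambda$, and here you are making your own life unnecessarily hard. You correctly identify that $\mu\resto{\Delta_{\lambda,1}}:\Delta_{\lambda,1}\dasharrow\Delta_{\mu,1}$ need not be an isomorphism in codimension one, so the inequality $\kappa\bigl(\sA_\lambda\resto{\Delta_{\lambda,1}}\bigr)\geq\kappa\bigl(\sA_\mu\resto{\Delta_{\mu,1}}\bigr)$ is genuinely not obvious; your proposed step-by-step MMP bookkeeping is plausible but not actually carried out, and the assertion that ``$h^0$ does not drop'' across a flip restricted to the boundary would need a real argument.

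The paper sidesteps this entirely. It never compares $\kappa\bigl(\sA_\lambda\resto{\Delta_{\lambda,1}}\bigr)$ with $\kappa\bigl(\sA_\mu\resto{\Delta_{\mu,1}}\bigr)$. Instead it chooses open sets $U_\lambda\subset Z_\lambda$, $U_\mu\subset Z_\mu$ on which $\mu$ is an isomorphism and which meet the chosen boundary component, sets $\Delta_{\lambda,1}^\circ:=\Delta_{\lambda,1}\cap U_\lambda\cong\Delta_{\mu,1}\cap U_\mu=:\Delta_{\mu,1}^\circ$, and observes that the composite
\[
H^0\bigl(Z_\lambda,\sA_\lambda^{\otimes m}\bigr)\ \xrightarrow{\ \mu_1\ }\ H^0\bigl(Z_\mu,\sA_\mu^{\otimes m}\bigr)\ \xrightarrow{\ \beta_1\ }\ H^0\bigl(\Delta_{\mu,1},\sA_\mu^{\otimes m}\resto{\Delta_{\mu,1}}\bigr)\ \xrightarrow{\ \beta_2\ }\ H^0\bigl(\Delta_{\mu,1}^\circ,\sA_\mu^{\otimes m}\resto{\Delta_{\mu,1}^\circ}\bigr)
\]
is injective (the first map by Lemma~\ref{lem:pushdownA}, the second by your fibrewise argument, the third trivially). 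Under the identification $\Delta_{\mu,1}^\circ\cong\Delta_{\lambda,1}^\circ$ this composite \emph{equals} restriction from $Z_\lambda$ to $\Delta_{\lambda,1}^\circ$, and that restriction factors through $H^0\bigl(\Delta_{\lambda,1},\sA_\lambda^{\otimes m}\resto{\Delta_{\lambda,1}}\bigr)$. Hence $h^0\bigl(\Delta_{\lambda,1},\sA_\lambda^{\otimes m}\resto{\Delta_{\lambda,1}}\bigr)\geq h^0\bigl(Z_\lambda,\sA_\lambda^{\otimes m}\bigr)$ for all $m$, and the conclusion follows immediately. The point is that you only need to land in sections over an \emph{open} subset of $\Delta_{\lambda,1}$; you do not need to control what happens over the bad locus of $\mu\resto{\Delta_{\lambda,1}}$ at all.
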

\begin{proof}
  We have seen in Remark~\ref{rem:bdryFne} that $\Delta_F = \Delta_\mu \cap F$
  is not empty. So, there exists a component $\Delta_{\mu,1} \subseteq
  \Delta_\mu$ that intersects all $\pi$-fibers. Let $\Delta_{\lambda,1}
  \subseteq \Delta_\lambda$ be its strict transform. Since the birational map
  $\mu$ does not contract $\Delta_{\lambda,1}$, and since $\mu^{-1}$ does not
  contract any divisors, $\mu$ induces an isomorphism of open sets $U_\lambda
  \subseteq Z_\lambda$ and $U_\mu \subseteq Z_\mu$ such that
  $\Delta^{\circ}_{\lambda,1} := \Delta_{\lambda,1} \cap U_\lambda$ and
  $\Delta^{\circ}_{\mu,1} := \Delta_{\mu,1} \cap U_\mu$ are both non-empty.
  
  For an arbitrary $m \in \mathbb N$ we obtain a commutative diagram of linear
  maps,
  $$
  \xymatrix{ H^0\bigl(Z_\lambda,\, \sA_\lambda^{\otimes m} \bigr)
    \ar^(.45){\alpha_1}_(.45){\text{restr.}}[r] \ar[d]_{\mu_1} &
    H^0\bigl(\Delta_{\lambda,1},\, \sA_\lambda^{\otimes m}
    \resto{\Delta_{\lambda,1}}\bigr) \ar^{\alpha_2}_{\text{restr.}}[r] &
    H^0\bigl(\Delta^{\circ}_{\lambda,1},\, \sA_\lambda^{\otimes m}
    \resto{\Delta^{\circ}_{\lambda,1}}\bigr)  \ar[d]^{\mu_2} \\
    H^0\bigl(Z_\mu,\, \sA_\mu^{\otimes m} \bigr)
    \ar^(.45){\beta_1}_(.45){\text{restr.}}[r] & H^0\bigl(\Delta_{\mu,1},\,
    \sA_\mu^{\otimes m} \resto{\Delta_{\mu,1}}\bigr)
    \ar^{\beta_2}_{\text{restr.}}[r] & H^0\bigl(\Delta^{\circ}_{\mu,1},\,
    \sA_\mu^{\otimes m} \resto{\Delta^{\circ}_{\mu,1}}\bigr), }
  $$
  where the $\mu_i$, $i=1,2$ are the obvious push-forward morphisms coming
  from the construction of $\sA_\mu$ in Lemma~\ref{lem:pushdownA}. Since
  $\mu_1$ and $\beta_2$ are clearly injective, Corollary~\ref{cor:nontriv}
  will follow once we show that $\beta_1$ is injective as well. Now, let
  $\sigma \in H^0\bigl(Z_\mu,\, \sA_\mu^{\otimes m} \bigr)$ and assume that
  $\sigma$ is in the kernel of $\beta_1$.  By choice of $\Delta_{\mu,1}$, any
  general fiber $F$ intersects $\Delta_{\mu,1}$ in at least one point. The
  triviality of $\sA_\mu\resto{F}$ asserted in Corollary~\ref{cor:trivialA}
  then implies that $\sigma$ vanishes along $F$. The fiber $F$ being general,
  we obtain that $\sigma = 0$ on all of $Z_\mu$.  Corollary~\ref{cor:nontriv}
  follows.
\end{proof}

\subsection{Existence of pluri-forms on the boundary} 
\label{ssec:8f}

Now consider the index-one-cover $\gamma: (Z'_\lambda, \Delta'_\lambda) \to
(Z_\lambda, \Delta_\lambda)$, as described in
Proposition~\ref{prop:index-cover}. The pair $(Z'_\lambda, \Delta'_\lambda)$
is then dlt, the log canonical divisor is trivial, $\sO_{Z'_\lambda} \bigl(
K_{Z'_\lambda}+ \Delta'_\lambda \bigr) \cong \sO_{Z'_\lambda}$, and the
pull-back $\sA'_{\lambda} := \gamma^*(\sA_{\lambda})$ is an invertible
Viehweg-Zuo sheaf on $(Z'_\lambda, \Delta'_\lambda)$ with
$\kappa(\sA'_{\lambda}) > 0$. Better still, if $\Delta'_{\lambda,1} \subseteq
\gamma^{-1} \bigl( \Delta_{\lambda,1} \bigr)$ is any component,
Corollary~\ref{cor:nontriv} immediately implies that
$\kappa(\sA'_{\lambda}\resto{\Delta'_{\lambda,1}}) > 0$.

Now recall from Lemma~\ref{lem:dltindexone} that $(Z'_\lambda,
\Delta'_\lambda)$ is snc along the boundary away from a closed subset $W$ with
$\codim_Z (W\cap \Delta'_\lambda) \geq 3$. The divisor $\Delta'_\lambda$ is
therefore Cartier in codimension two and inversion of adjunction applies,
cf.~\cite[Sect.~5.4]{KM98}.  Setting
$$
\Delta''_{\lambda,1} := (\Delta'_{\lambda} - \Delta'_{\lambda,1})
\resto{\Delta'_{\lambda,1}},
$$
this yields the following:
\begin{enumerate-p}
\item the subvariety $\Delta'_{\lambda,1}$ is normal \cite[Cor.~5.52]{KM98}
  and
\item the pair $(\Delta'_{\lambda,1}, \Delta''_{\lambda,1})$ is again
  logarithmic and dlt \cite[Prop.~5.59]{KM98}.
\end{enumerate-p}

\begin{obs}
  It follows from the adjunction formula that the log canonical divisor
  $K_{\Delta'_{\lambda,1}} + \Delta''_{\lambda,1}$ is trivial.
\end{obs}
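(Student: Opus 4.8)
The plan is to deduce the triviality of $K_{\Delta'_{\lambda,1}} + \Delta''_{\lambda,1}$ directly from the adjunction formula applied to the index-one cover, using that $K_{Z'_\lambda} + \Delta'_\lambda \cong \sO_{Z'_\lambda}$ together with the two structural facts just recorded: that $\Delta'_{\lambda,1}$ is normal and that $\Delta'_\lambda$ is Cartier in codimension two. First I would recall that $(Z'_\lambda, \Delta'_\lambda)$ is dlt of index one, so by Lemma~\ref{lem:dltindexone} it is snc away from a closed set $W$ with $\codim(W \cap \Delta'_\lambda) \geq 3$; in particular each component of $\Delta'_\lambda$ is a Cartier divisor in a neighborhood of its generic point and the components meet transversely in codimension two. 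On this open locus $U := Z'_\lambda \setminus W$ the ordinary adjunction formula for an snc pair gives
$$
\bigl( K_{Z'_\lambda} + \Delta'_\lambda \bigr) \resto{\Delta'_{\lambda,1} \cap U} \cong \sO_{\Delta'_{\lambda,1} \cap U}\bigl( K_{\Delta'_{\lambda,1} \cap U} + \Delta''_{\lambda,1} \cap U \bigr),
$$
and since the left-hand side is trivial, so is the right-hand side.

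Next I would promote this isomorphism over the big open subset to a global one. Because $\Delta'_{\lambda,1}$ is normal and $W \cap \Delta'_{\lambda,1}$ has codimension at least two in $\Delta'_{\lambda,1}$ (it has codimension at least three in $Z'_\lambda$, hence at least two in the divisor), the sheaf $\sO_{\Delta'_{\lambda,1}}\bigl( K_{\Delta'_{\lambda,1}} + \Delta''_{\lambda,1} \bigr)$ is reflexive of rank one, and a reflexive sheaf is determined by its restriction to the complement of a codimension-two set. Thus the isomorphism with $\sO_{\Delta'_{\lambda,1}}$ established on $\Delta'_{\lambda,1} \cap U$ extends uniquely to all of $\Delta'_{\lambda,1}$. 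Alternatively, one can phrase this via \cite[Prop.~5.59]{KM98} or \cite[Sect.~5.4]{KM98}: inversion of adjunction, which has already been invoked above to produce $(\Delta'_{\lambda,1}, \Delta''_{\lambda,1})$, comes with the adjunction identity $(K_{Z'_\lambda} + \Delta'_\lambda)\resto{\Delta'_{\lambda,1}} \sim_{\Q} K_{\Delta'_{\lambda,1}} + \Delta''_{\lambda,1}$, and since $\Delta'_\lambda$ is Cartier in codimension two the different vanishes, so the $\Q$-linear equivalence is an honest equivalence of Weil divisor classes; triviality of the left side then gives triviality of the right.

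The only point requiring care — and the step I expect to be the main (minor) obstacle — is the vanishing of the different: a priori $K_{\Delta'_{\lambda,1}} + \Delta''_{\lambda,1}$ could differ from the naive restriction by a correction term supported on the locus where $\Delta'_\lambda$ fails to be Cartier along $\Delta'_{\lambda,1}$, but Lemma~\ref{lem:dltindexone} guarantees that locus has codimension at least three in $Z'_\lambda$, hence codimension at least two in $\Delta'_{\lambda,1}$, so the different, being an effective divisor, must be zero. With that observation the whole statement is immediate, and I would present the argument in two or three lines: apply adjunction on the snc locus, note both sides are reflexive rank-one sheaves agreeing off a codimension-two set, and conclude by reflexivity (equivalently, note the different vanishes by Lemma~\ref{lem:dltindexone}).
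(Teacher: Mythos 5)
Your argument is correct and is precisely the reasoning the paper leaves implicit in this Observation: adjunction on the locus where $(Z'_\lambda,\Delta'_\lambda)$ is snc (using that $K_{Z'_\lambda}+\Delta'_\lambda$ is trivial), with the different vanishing because Lemma~\ref{lem:dltindexone} puts the non-snc locus in codimension $\geq 3$, and the conclusion extended over the remaining codimension-two set by normality of $\Delta'_{\lambda,1}$ and reflexivity. No gap; your write-up just makes explicit what the paper asserts by citing the adjunction formula together with the facts recorded immediately before the Observation.
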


\begin{prop}
  The pair $(\Delta'_{\lambda,1}, \Delta''_{\lambda,1})$ admits a Viehweg-Zuo
  sheaf of positive Kodaira-Iitaka dimension.
\end{prop}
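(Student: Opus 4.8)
The plan is to produce, on $\Delta'_{\lambda,1}$, a Viehweg-Zuo sheaf by restricting differentials from the ambient space and peeling off the conormal direction, exactly in the spirit of the proof of Proposition~\ref{prop:c6}, but now working along a boundary component rather than along a fiber. First I would recall that we have an invertible Viehweg-Zuo sheaf $\sA'_\lambda$ on $(Z'_\lambda,\Delta'_\lambda)$ with $\kappa(\sA'_\lambda\resto{\Delta'_{\lambda,1}})>0$, and that $(Z'_\lambda,\Delta'_\lambda)$ is snc along $\Delta'_{\lambda,1}$ outside a set $W$ with $\codim(W\cap\Delta'_{\lambda,1})\geq 2$, by Lemma~\ref{lem:dltindexone}. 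Let $U\subseteq Z'_\lambda$ be the snc locus, and work on the open set $\Delta^\circ_{\lambda,1}:=\Delta'_{\lambda,1}\cap U$, whose complement in $\Delta'_{\lambda,1}$ has codimension $\geq 2$.

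On $\Delta^\circ_{\lambda,1}$ the residue sequence for logarithmic differentials along the smooth component $\Delta'_{\lambda,1}$ reads
$$
0 \longrightarrow \Omega^1_{\Delta^\circ_{\lambda,1}}(\log \Delta''_{\lambda,1}) \longrightarrow \Omega^1_{Z'_\lambda}(\log \Delta'_\lambda)\resto{\Delta^\circ_{\lambda,1}} \longrightarrow \sO_{\Delta^\circ_{\lambda,1}} \longrightarrow 0,
$$
where the last term is the residue along $\Delta'_{\lambda,1}$ and is trivial of rank one. Restricting the embedding $\sA'_\lambda\resto{\Delta^\circ_{\lambda,1}}\into\bigl(\Omega^1_{Z'_\lambda}(\log\Delta'_\lambda)\resto{\Delta^\circ_{\lambda,1}}\bigr)^{\otimes n}$ and applying Lemma~\ref{lem:reduction}\iref{item:3} with $\sG\simeq\sO$, I get a number $m\leq n$ and an injection $\sA'_\lambda\resto{\Delta^\circ_{\lambda,1}}\into\bigl(\Omega^1_{\Delta^\circ_{\lambda,1}}(\log\Delta''_{\lambda,1})\bigr)^{\otimes m}$. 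Since $\Delta'_{\lambda,1}$ is normal and $\sA'_\lambda\resto{\Delta'_{\lambda,1}}$ is invertible, taking reflexive hulls extends this to an embedding $\sA'_\lambda\resto{\Delta'_{\lambda,1}}\into\bigl(\Omega^1_{\Delta'_{\lambda,1}}(\log\Delta''_{\lambda,1})\bigr)^{[m]}$, exhibiting $\sA'_\lambda\resto{\Delta'_{\lambda,1}}$ as a Viehweg-Zuo sheaf on $(\Delta'_{\lambda,1},\Delta''_{\lambda,1})$. Its Kodaira-Iitaka dimension is $\geq\kappa(\sA'_\lambda\resto{\Delta'_{\lambda,1}})>0$ by Corollary~\ref{cor:nontriv} together with the remark at the start of Section~\ref{ssec:8f}.

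The main obstacle I anticipate is twofold. First, one must be careful that the residue sequence above is valid in the log-smooth locus and that the "extra" boundary $\Delta''_{\lambda,1}$ defined by inversion of adjunction really is the correct divisor making the sequence exact; this is why it matters that Lemma~\ref{lem:dltindexone} gives \emph{simple} normal crossings, so that in codimension one along $\Delta'_{\lambda,1}$ the other components meet it transversally, and $\Delta''_{\lambda,1}$ is reduced and snc there. Second, extending the inclusion of sheaves across the codimension-$\geq 2$ bad locus $W\cap\Delta'_{\lambda,1}$ requires the target $\bigl(\Omega^1_{\Delta'_{\lambda,1}}(\log\Delta''_{\lambda,1})\bigr)^{[m]}$ to be reflexive — which it is by construction — and $\sA'_\lambda\resto{\Delta'_{\lambda,1}}$ to be reflexive, which holds since it is invertible on a normal variety; then the inclusion over the big open set extends uniquely. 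Finally, one checks $\kappa$ does not drop under this restriction-and-push-forward, which is immediate from $h^0$ being only made larger, so the proof concludes by contradiction with Induction Hypothesis~\ref{ihyp:p81} once the remaining hypotheses of Proposition~\ref{prop:6.2} (namely $\Q$-factoriality of $\Delta'_{\lambda,1}$, which may need an extra argument or a further small modification, and $\kappa(K_{\Delta'_{\lambda,1}}+\Delta''_{\lambda,1})=0$, already observed) are in place.
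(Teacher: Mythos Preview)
Your proposal is correct and follows essentially the same route as the paper: restrict $\sA'_\lambda$ to a big open set of $\Delta'_{\lambda,1}$ where the ambient pair is snc, use the residue/conormal sequence together with Lemma~\ref{lem:reduction} to land in a tensor power of $\Omega^1_{\Delta'_{\lambda,1}}(\log \Delta''_{\lambda,1})$, and then extend by reflexivity. Your choice of open set (the ambient snc locus) is if anything slightly more careful than the paper's $(\Delta'_{\lambda,1},\Delta''_{\lambda,1})_{\reg}$, but both have complement of codimension $\geq 2$, so the argument is the same.
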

\begin{proof}
  Consider the standard conormal sequence for logarithmic differentials
  \cite[Lem.~2.13]{KK08} on the open subset $\Delta'^{\circ}_{\lambda,1} :=
  (\Delta'_{\lambda,1}, \Delta''_{\lambda,1})_{\reg}$,
  \begin{equation}\label{eq:diffsondelta}
    0 \longrightarrow \Omega^1_{\Delta'^{\circ}_{\lambda,1}}(\log \Delta''_{\lambda,1})
    \longrightarrow \Omega^1_{Z'_\lambda} (\log
    \Delta'_{\lambda})\resto{\Delta'^{\circ}_{\lambda,1}} \longrightarrow
    \sO_{\Delta'^{\circ}_{\lambda,1}} \longrightarrow 0.
  \end{equation}
  The last term in~\eqref{eq:diffsondelta} being trivial,
  Lemma~\ref{lem:reduction} gives a number $m \leq n$ and an injection
  $$
  \sA'_{\lambda}\resto{\Delta'^{\circ}_{\lambda,1}} \into \left(
    \Omega^1_{\Delta'^{\circ}_{\lambda,1}}(\log \Delta''_{\lambda,1})
  \right)^{\otimes m}.
  $$
  Using that $\sA'_{\lambda}$ is invertible and that
  $\codim_{\Delta'_{\lambda,1}} (W\cap \Delta'_{\lambda,1}) \geq 2$, we pass
  to reflexive hulls and realize $\sA'_{\lambda}$ as a Viehweg-Zuo sheaf on
  $\Delta'_{\lambda,1}$,

  \hfill $ \sA'_{\lambda}\resto{\Delta'_{\lambda,1}} \subseteq \left(
    \Omega^1_{\Delta'_{\lambda,1}}(\log \Delta''_{\lambda,1}) \right)^{[m]}.
  $\hfill
\end{proof}

\subsection{Completion of the proof}

Recall that we have seen that the pair $(\Delta'_{\lambda,1},
\Delta''_{\lambda,1})$ is dlt, has trivial log canonical class and admits a
Viehweg-Zuo sheaf of positive Kodaira-Iitaka dimension. Since $\dim
\Delta'_{\lambda,1} \leq 2$, being dlt implies that the variety
$\Delta'_{\lambda,1}$ is $\Q$-factorial, cf.~\cite[Prop.~4.11]{KM98}.  This
clearly contradicts the Induction Hypothesis~\ref{ihyp:p81}.
Assumption~\ref{ass:conk0ind} is therefore absurd. This finishes the proof of
Proposition~\ref{prop:6.2}. Consequently, Theorems~\ref{thm:mainresult0} and
\ref{thm:mainresult2} are shown in case $\kappa(Y^\circ) = 0$. \qed

\section{The case \texorpdfstring{$\kappa(Y^\circ) > 0$}{in case of
    positive Kodaira dimension}}
\label{sec:kpos}

\subsection{Setup}
\label{sec:sddljf1}

Let $f^\circ: X^\circ \to Y^\circ$ be a smooth projective family of varieties
with semi-ample canonical bundle over a quasi-projective variety $Y^\circ$ of
dimension $\dim Y^\circ \leq 3$ and logarithmic Kodaira dimension
$\kappa(Y^\circ) > 0$.

Again, let $Y$ be a compactification of $Y^\circ$ where $D := Y \setminus
Y^\circ$ is a divisor with simple normal crossings, and let $\lambda : (Y,D)
\dasharrow (Y_\lambda, D_\lambda)$ be the map to a minimal model. The divisor
$K_{Y_\lambda} + D_\lambda$ is then semi-ample by the log abundance theorem
\cite{MR2057020} and defines a map $\pi : Y_\lambda \to C$ with $\dim C =
\kappa(Y^\circ)$.

\subsection{Proof of Theorem~\ref*{thm:mainresult2}}
\label{sec:thm12ifkbig}

To prove Theorem~\ref{thm:mainresult2}, assume the $f^\circ$ is a family of
canonically polarized manifolds. We may also assume without loss of generality
that the family $f^\circ$ is not isotrivial and that $\kappa(Y^\circ) < \dim
Y$.  Blowing up $Y$ and pulling back the family, we obtain a diagram as
follows,
$$
\xymatrix{
  \wtilde X^\circ \ar[rrr]^{\wtilde f^\circ}_-{\text{family of canon. pol. var.}}
  \ar[d]_{\text{pull-back}} & & & (\wtilde Y, \wtilde D) \ar[d]^{\text{blow up}}
  \ar@/^2mm/[drrrr]^{\wtilde \pi} \\
  X^\circ \ar[rrr]^{f^\circ}_-{\text{family of canon. pol. var.}} & & &
  (Y,D) \ar@{-->}[rrr]^{\lambda}_-{\text{min. model program}} & & &
  (Y_\lambda, D_\lambda) \ar[r]_-{\pi} & C.  }
$$
If $\wtilde F \subset \wtilde Y$ is the general $\wtilde \pi$-fiber, recall
the standard fact that $\kappa(K_{\wtilde F} + \wtilde D|_{\wtilde F})=0$,
cf. \cite[sect.~11.6]{Iitaka82}. We saw in Section~\ref{sec:k0} that then the
family $\wtilde f^\circ$ must be isotrivial over $\wtilde F$. This shows that
the fibration $\pi$ factors the moduli map birationally, and proves
Theorem~\ref{thm:mainresult2} in case $\kappa(Y^\circ) > 0$. \qed

\subsection{Proof of Theorem~\ref*{thm:mainresult3}}

It remains to prove Theorem~\ref{thm:mainresult3} and give a detailed
description of the moduli map if $Y$ is a surface.

To this end, we maintain the notation and assumptions made in
Section~\ref{sec:sddljf1} above and assume in addition that $Y$ is a surface,
that $\Var(f^\circ) > 0$, and that $\kappa(Y^\circ)=1$. As there are no
flipping contractions in dimension two, $\lambda$ is a birational morphism,
and $K_{Y_\lambda} + D_\lambda$ is trivial on the general $\pi$-fiber
$F_\lambda \subset Y_\lambda$. In particular, one of the following holds:
\begin{itemize}
\item $F_\lambda$ is an elliptic curve and no component of $D_\lambda$
  dominates $C$, or
\item $F_\lambda$ is isomorphic to $\P^1$ and intersects $D_\lambda$ in
  exactly two points.
\end{itemize}
If the general fibers of $\pi$ are isomorphic to $(\mathbb A^1)^*$,
Corollary~\ref{cor:family-push-forward} gives the statement of
Theorem~\ref{thm:mainresult3}.

Otherwise, let $V \subseteq C$ be an open subset such that $\pi$ is a smooth
elliptic fibration over $V$. Let $\wtilde V\subset Y_\lambda$ be a general
hyperplane section. Restricting $V$ further if necessary we may assume that
$\wtilde V$ is \'etale over $V$.  Taking a base change to $\wtilde V$, we
obtain a section $\sigma: \wtilde V\to \wtilde U := U \times_V \wtilde V$.
Finally, set $\wtilde X := X \times_U \wtilde U$, and $Z := \wtilde V
\times_\sigma \wtilde X$. Shrinking $V$ further, if necessary, an application
of Lemma~\ref{lem:relglue} completes the proof. \qed

\subsection{Proof of Theorem~\ref*{thm:mainresult0}}

To prove Theorem~\ref{thm:mainresult0}, we argue by contradiction and assume
that $0 < \kappa(Y^\circ) < \dim Y^\circ$ and that $\Var(f^\circ) = \dim
Y^\circ$. The argumentation of Section~\ref{sec:thm12ifkbig} applies verbatim
and shows the existence of a proper fibration of $\wtilde \pi : \wtilde Y \to
C$ such that the family ${\wtilde f}^\circ$ is isotrivial when restricted to
the general $\wtilde \pi$-fiber. That, however, contradicts the assumption
that the variation is maximal. Theorem~\ref{thm:mainresult0} is thus shown in
case $\kappa(Y^\circ) > 0$. \qed

\providecommand{\bysame}{\leavevmode\hbox to3em{\hrulefill}\thinspace}
\providecommand{\MR}{\relax\ifhmode\unskip\space\fi MR}
\providecommand{\MRhref}[2]{%
  \href{http://www.ams.org/mathscinet-getitem?mr=#1}{#2}
}
\providecommand{\href}[2]{#2}


\begin{thebibliography}{BCHM06}

\bibitem[BS95]{BS95}
{\sc M.~C. Beltrametti and A.~J. Sommese}: \emph{The adjunction theory of
  complex projective varieties}, de Gruyter Expositions in Mathematics,
  vol.~16, Walter de Gruyter \& Co., Berlin, 1995. {\sf\scriptsize 96f:14004}

\bibitem[BCHM06]{BCHM06}
{\sc C.~Birkar, P.~Cascini, C.~D. Hacon, and J.~McKernan}: \emph{Existence of
  minimal models for varieties of log general type}, preprint math.AG/0610203,
  October 2006.

\bibitem[BDPP04]{BDPP03}
{\sc S.~Boucksom, J.-P. Demailly, M.~P\u{a}un, and T.~Peternell}: \emph{The
  pseudo-effective cone of a compact {K}\"ahler manifold and varieties of
  negative {K}odaira dimension}, preprint math.AG/0405285, May 2004.

\bibitem[Cor07]{MR2352762}
{\sc A.~Corti} (ed.): \emph{Flips for 3-folds and 4-folds}, Oxford Lecture
  Series in Mathematics and its Applications, vol.~35, Oxford University Press,
  Oxford, 2007. {\sf\scriptsize MR2352762 (2008j:14031)}

\bibitem[EV92]{EV92}
{\sc H.~Esnault and E.~Viehweg}: \emph{Lectures on vanishing theorems}, DMV
  Seminar, vol.~20, Birkh\"auser Verlag, Basel, 1992. {\sf\scriptsize MR1193913
  (94a:14017)}

\bibitem[GKK08]{GKK08}
{\sc D.~Greb, S.~Kebekus, and S.~Kov\'acs}: \emph{Extension theorems for
  differential forms, and {B}ogomolov-{S}ommese vanishing on log canonical
  varieties}, preprint 
  \href{http://arxiv.org/abs/0808.3647}{arXiv:0808.3647v2}, August 2008.

\bibitem[Har77]{Ha77}
{\sc R.~Hartshorne}: \emph{Algebraic geometry}, Springer-Verlag, New York,
  1977, Graduate Texts in Mathematics, No. 52. {\sf\scriptsize 57 \#3116}

\bibitem[HL97]{HL97}
{\sc D.~Huybrechts and M.~Lehn}: \emph{The geometry of moduli spaces of
  sheaves}, Aspects of Mathematics, E31, Friedr. Vieweg \& Sohn, Braunschweig,
  1997. {\sf\scriptsize MR1450870 (98g:14012)}

\bibitem[Iit82]{Iitaka82}
{\sc S.~Iitaka}: \emph{Algebraic geometry}, Graduate Texts in Mathematics,
  vol.~76, Springer-Verlag, New York, 1982, An introduction to birational
  geometry of algebraic varieties, North-Holland Mathematical Library, 24.
  {\sf\scriptsize MR637060 (84j:14001)}

\bibitem[KK08]{KK08}
{\sc S.~Kebekus and S.~J. Kov{\'a}cs}: \emph{Families of canonically polarized
  varieties over surfaces}, Invent. Math. \textbf{172} (2008), no.~3, 657--682.
  {\sf\scriptsize DOI: 10.1007/s00222-008-0128-8}

\bibitem[KS06]{KS06}
{\sc S.~Kebekus and L.~{Sol{\'a} Conde}}: \emph{Existence of rational curves on
  algebraic varieties, minimal rational tangents, and applications}, Global
  aspects of complex geometry, Springer, Berlin, 2006, pp.~359--416.
  {\sf\scriptsize MR2264116}

\bibitem[KST07]{KST07}
{\sc S.~Kebekus, L.~{Sol\'a Conde}, and M.~Toma}: \emph{Rationally connected
  foliations after {Bogomolov} and {McQuillan}}, J. Algebraic Geom. \textbf{16}
  (2007), no.~1, 65--81.

\bibitem[KM{\hbox{Mc}}04]{MR2057020}
{\sc S.~Keel, K.~Matsuki, and J.~{\hbox{Mc}}Kernan}: \emph{Corrections to:
  ``{L}og abundance theorem for threefolds'' [{D}uke {M}ath. {J}. {\bf 75}
  (1994), no. 1, 99--119; mr1284817]}, Duke Math. J. \textbf{122} (2004),
  no.~3, 625--630. {\sf\scriptsize MR2057020 (2005a:14018)}

\bibitem[{\hbox{K}}{\hbox{Mc}}99]{KMcK}
{\sc S.~{\hbox{K}}eel and J.~{\hbox{Mc}}Kernan}: \emph{Rational curves on
  quasi-projective surfaces}, Mem. Amer. Math. Soc. \textbf{140} (1999),
  no.~669, viii+153. {\sf\scriptsize 99m:14068}

\bibitem[Kol07]{Kollar07}
{\sc J.~Koll{\'a}r}: \emph{Lectures on resolution of singularities}, Annals of
  Mathematics Studies, vol. 166, Princeton University Press, Princeton, NJ,
  2007. {\sf\scriptsize MR2289519}

\bibitem[{\hbox{K}}{\hbox{M}}98]{KM98}
{\sc J.~{\hbox{K}}oll{\'a}r and S.~{\hbox{M}}ori}: \emph{Birational geometry of
  algebraic varieties}, Cambridge Tracts in Mathematics, vol. 134, Cambridge
  University Press, Cambridge, 1998, With the collaboration of C. H. Clemens
  and A. Corti, Translated from the 1998 Japanese original. {\sf\scriptsize
  2000b:14018}

\bibitem[Kov00]{Kovacs00a}
{\sc S.~J. Kov{\'a}cs}: \emph{Algebraic hyperbolicity of f{i}ne moduli spaces},
  J. Algebraic Geom. \textbf{9} (2000), no.~1, 165--174. {\sf\scriptsize
  MR1713524 (2000i:14017)}

\bibitem[Laz04]{L04}
{\sc R.~Lazarsfeld}: \emph{Positivity in algebraic geometry. {II}}, Ergebnisse
  der Mathematik und ihrer Grenzgebiete. 3. Folge. A Series of Modern Surveys
  in Mathematics [Results in Mathematics and Related Areas. 3rd Series. A
  Series of Modern Surveys in Mathematics], vol.~49, Springer-Verlag, Berlin,
  2004, Positivity for vector bundles, and multiplier ideals. {\sf\scriptsize
  MR2095472}

\bibitem[Miy87]{Miy85}
{\sc Y.~Miyaoka}: \emph{Deformations of a morphism along a foliation and
  applications}, Algebraic geometry, Bowdoin, 1985 (Brunswick, Maine, 1985),
  Proc. Sympos. Pure Math., vol.~46, Amer. Math. Soc., Providence, RI, 1987,
  pp.~245--268. {\sf\scriptsize MR927960 (89e:14011)}

\bibitem[Rei87]{Reid87}
{\sc M.~Reid}: \emph{Young person's guide to canonical singularities},
  Algebraic geometry, Bowdoin, 1985 (Brunswick, Maine, 1985), Proc. Sympos.
  Pure Math., vol.~46, Amer. Math. Soc., Providence, RI, 1987, pp.~345--414.
  {\sf\scriptsize MR927963 (89b:14016)}

\bibitem[Sha63]{Shaf63}
{\sc I.~R. Shafarevich}: \emph{Algebraic number fields}, Proc. Internat. Congr.
  Mathematicians (Stockholm, 1962), Inst. Mittag-Leffler, Djursholm, 1963,
  English translation: Amer.\ Math.\ Soc.\ Transl.\ (2) {\bf 31} (1963),
  25--39, pp.~163--176. {\sf\scriptsize MR0202709 (34 \#2569)}

\bibitem[Sha94]{Shaf94}
{\sc I.~R. Shafarevich}: \emph{Basic algebraic geometry. 1}, second ed.,
  Springer-Verlag, Berlin, 1994, Varieties in projective space, Translated from
  the 1988 Russian edition and with notes by Miles Reid. {\sf\scriptsize
  MR1328833 (95m:14001)}

\bibitem[Vie01]{Viehweg01}
{\sc E.~Viehweg}: \emph{Positivity of direct image sheaves and applications to
  families of higher dimensional manifolds}, School on Vanishing Theorems and
  Effective Results in Algebraic Geometry (Trieste, 2000), ICTP Lect. Notes,
  vol.~6, Abdus Salam Int. Cent. Theoret. Phys., Trieste, 2001, Available on
  the ICTP web site at {\tt
  http://www.ictp.trieste.it/$\sim$pub\_off/services}, pp.~249--284.
  {\sf\scriptsize MR1919460 (2003f:14024)}

\bibitem[VZ01]{Vie-Zuo01}
{\sc E.~Viehweg and K.~Zuo}: \emph{On the isotriviality of families of
  projective manifolds over curves}, J. Algebraic Geom. \textbf{10} (2001),
  no.~4, 781--799. {\sf\scriptsize MR1838979 (2002g:14012)}

\bibitem[VZ02]{VZ02}
{\sc E.~Viehweg and K.~Zuo}: \emph{Base spaces of non-isotrivial families of
  smooth minimal models}, Complex geometry (G\"ottingen, 2000), Springer,
  Berlin, 2002, pp.~279--328. {\sf\scriptsize MR1922109 (2003h:14019)}
\end{thebibliography}
\end{document}